\newcommand{\Hom}{\operatorname{Hom}}
\newcommand{\End}{\operatorname{End}}
\newcommand{\murev}{\mu_{\textrm{rev}}} 
\newcommand{\RR}{\mathbb{R}}
\newcommand{\Tree}{\mathbb{T}_n}
\newcommand{\vertex}{v}
\def\kk{\kappa}
\newcommand{\Ext}{\operatorname{Ext}}
\newcommand{\isom}{\simeq}
\newcommand{\sgn}{\mathrm{sign}}
\newcommand{\A}{\mathcal A}
\renewcommand{\S}{\mathcal S}
\begin{document}

\title*{Acyclic cluster algebras revisited}
\author{David Speyer \and Hugh Thomas}
%
\institute{David Speyer\at Department of Mathematics, 
University of Michigan, Ann Arbor, MI, USA \email{speyer@umich.edu}
\and Hugh Thomas\at Department of Mathematics and Statistics,
University of New Brunswick, Fredericton, NB, Canada
\email{hthomas@unb.ca}}

%
%
\maketitle
{\it
{Dedicated to Idun Reiten on the occasion of her seventieth birthday}
}

\bigskip\bigskip

\abstract{We describe a new way to relate an
acyclic, skew-symmetrizable cluster algebra to the representation theory
of a finite dimensional hereditary algebra.  
This approach is designed to explain the $c$-vectors of the cluster
algebra.  We obtain a necessary and sufficient combinatorial criterion for
a collection of vectors to be the $c$-vectors of some cluster in the
cluster algebra associated to a given skew-symmetrizable matrix.  
Our approach also 
yields a simple proof of the known result that the $c$-vectors 
of an acyclic cluster algebra are sign-coherent, from which
Nakanishi and Zelevinsky have showed that it is possible to deduce
in an elementary way several important facts about cluster algebras 
(specifically: Conjectures 1.1--1.4 of \cite{DWZ}).  
}

\section{Introduction}

Let $B^0$ be an acyclic skew-symmetrizable $n\times n$ integer matrix.  
Let $\widetilde B^0$
be the $2n\times n$ matrix whost top half is $B^0$ 
and whose bottom half is an $n\times n$ identity matrix.  

We consider an infinite $n$-ary tree $\Tree$, 
with each edge labelled by a number
from 1 to $n$, such that at each vertex, there is exactly one edge with
each label.  We label one vertex $\vertex_b$, and we associate the matrix 
$\widetilde B^0$ to it. 

There is an 
operation called \emph{matrix mutation} which plays a
fundamental role in the construction of cluster algebras.  (We recall
the definition in Section \ref{ca_background}.)  Using this definition, it is 
possible to associate a $2n\times n$ matrix to each vertex of
$\Tree$, so
that if two vertices are joined by an edge labelled $i$, the corresponding
matrices are related by matrix mutation in the $i$-th position.  

Let $\vertex\in \Tree$.
 We write $\widetilde B^\vertex$ for 
the associated $(2n\times n)$ 
$B$-matrix, and $B^\vertex$ for its top half.
The $c$-vectors for $\vertex$, denoted
$c_1^\vertex,\dots,c_n^\vertex$ are by definition the columns of the bottom half of 
$\widetilde B^\vertex$.  

It has recently been understood that the $c$-vectors play an important role
in the behaviour of a cluster algebra associated to $B^0$.  
Nakanishi and 
Zelevinsky showed in \cite{NZ} that, once it is established that the 
$c$-vectors are sign-coherent, meaning that, for each $c$-vector,
either all the entries are non-negative or all are non-positive, then 
several fundamental results on the corresponding cluster algebra
follow by an elementary argument (specifically, Conjectures 1.1--1.4 of \cite{DWZ}).  

In this paper, we give a representation-theoretic interpretation of 
the $c$-vectors as classes in the Grothendieck group 
of indecomposable objects in the bounded derived category of a hereditary
abelian category.  
Their sign-coherence is an immediate 
consequence of this description.  

We use our representation-theoretic interpretation of $c$-vectors
to give a purely combinatorial description of which collections of
vectors arise as the collection of 
$c$-vectors for some cluster associated to $B^0$: they are certain collections
of roots in the root system associated to $B^0$.  (A more precise statement
is given in Section \ref{combcharsection}.)  

We emphasize that sign-coherence of 
$c$-vectors is already known more generally than the setting in
which we work, so sign-coherence does not constitute a new result.  The novelty
here consists in our approach, which uses a relatively light 
theoretical framework, and in the characterizations of 
the sets of $c$-vectors that can appear, which are new.

\subsection{Description of the categorification} \label{CatSummary}
Starting from $B^0$, we will define a certain hereditary category $\S$ (the 
definition appears in Section \ref{defS}).  Write 
$D^b(\S)$ for the bounded derived category of $\S$. 

As those familiar with derived categories will know, the bounded derived
category of a hereditary category is very easy to work with. We review this in the appendix to this paper. If the reader is fearful of derived categories, we urge him or her to turn there now. (The first author suffered from similar fears until a year ago.)
In particular, we recall that the indecomposable objects of $D^b(\S)$ are of the form $M[i]$ where 
$M$ is an indecomposable object of $\S$, and $i\in \mathbb Z$.  
For $M,N \in \S$, we have:
$$\Ext^r_{D^b(\S)}(M[i], N[j]) \cong \Ext^{r-i+j}_{\S}(M,N).$$

An object $X$ in $D^b(\S)$ is called exceptional if it is indecomposable 
and $\Ext^1(X,X)=0$.
So such an $X$ must be of the form $M[i]$, where $M$ is indecomposable 
and $\Ext^1(M,M)=0$. (We note for the record that the $0$ object is not indecomposable.)

We write $K_0(\mathcal{S})$ for the Grothendieck group of $\S$; for $X \in \S$, we write $[X]$ for the class of $X$ in $K_0(\S)$. 
For a complex $X_{\bullet}$ in $D^b(\S)$, we write $[X_{\bullet}]$ for $\sum (-1)^i [X_i]$; this map is well defined on isomorphism classes of objects in $D^b(\S)$, and is additive on triangles in the natural way.

 We will write 
$S_1,\dots,S_n$ for the simple objects of $\mathcal S$. 
The classes $[S_1],\dots,[S_n]$ form a basis for $K_0(D^b(\S))$, 
and we shall use
this basis to identify this Grothendieck group with $\mathbb Z^n$.

We say that $(X_1,\dots,X_r)$ is an exceptional sequence in $D^b(\mathcal S)$ 
if each $X_i$ is exceptional and $\Ext^{\bullet}(X_j,X_i)=0$ for $j>i$.  
The maximum length of an exceptional sequence is $n$, the number of 
simples of $\S$; a maximal-length exceptional sequence is called 
complete.  

We call a complete exceptional sequence $(X_1,\dots,X_n)$ noncrossing if it
has the following properties:
\begin{itemize} 
\item Each $X_i$ is in either $\S$ or $\S[-1]$.
\item $\Hom(X_i,X_j)=0=\Ext^{-1}(X_i,X_j)$ for $i\ne j$.  
\end{itemize}

We prove the following:

\begin{theorem}\label{repdesc} Let $B^0$ be a skew-symmetrizable matrix.  A collection $C$ of
$n$ vectors in $\mathbb Z^n$ is the collection of $c$-vectors
for some $\vertex\in\Tree$ if and only if there is a noncrossing  
exceptional sequence $(V_1,\dots,V_n)$ in $\S$, such that 
$C$ consists of the classes in $K_0(\S)$ of the objects $X_i$.  

Moreover, we can recover the top half of the corresponding $\tilde{B}$ matrix as an alternating combination of certain $\Ext$ groups, see Theorem~\ref{GoodStuffRepThry} for details.
\end{theorem}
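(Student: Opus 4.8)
The plan is to establish a bijection between the vertices of the mutation tree $\Tree$ and the noncrossing complete exceptional sequences in $D^b(\S)$, in such a way that the $c$-vectors read off at a vertex $\vertex$ coincide with the classes $[X_i]$ of the corresponding exceptional sequence. The natural strategy is induction on the distance in $\Tree$ from the base vertex $\vertex_b$. For the base case, I would check that the initial seed — whose $c$-vectors are the standard basis vectors $\pm e_i$ (in fact $+e_i$ at $\vertex_b$) — corresponds to a canonical noncrossing exceptional sequence, namely the simples $(S_1,\dots,S_n)$ suitably ordered (or their shifts $S_i[-1]$), whose classes in $K_0(\S)$ are precisely the $[S_i]=e_i$. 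The inductive step requires showing that a single matrix mutation $\mu_i$ on $\widetilde B^\vertex$ corresponds to a well-defined combinatorial operation on noncrossing exceptional sequences that (a) again yields a noncrossing exceptional sequence and (b) transforms the classes $[X_j]$ exactly as the mutation rule transforms the $c$-vectors.

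The central object of the inductive step is a \emph{mutation of exceptional sequences}. In the representation-theoretic literature there is a standard braid-group-style mutation of exceptional sequences, in which one replaces an adjacent pair $(X_i,X_{i+1})$ by another pair built from the $\Ext$- and $\Hom$-spaces between them (reflection of one object through the other). I would first recall or re-derive the effect of such a mutation on Grothendieck classes: reflecting $X$ through $Y$ changes $[X]$ by an integer multiple of $[Y]$, where the multiplier is the Euler form $\langle [Y],[X]\rangle$ (an alternating sum of $\dim\Ext^\bullet$). The key computation is then to match this reflection formula against the bottom-half mutation rule for $\widetilde B$, in which $c$-vectors transform by $c_j \mapsto c_j + [\text{sign factor}]\, b_{ij}\, c_i$ or $c_j\mapsto -c_i$. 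Since the sign-coherence (already established earlier via the $\S$-or-$\S[-1]$ dichotomy) pins down which objects lie in $\S$ versus $\S[-1]$, the sign in the mutation rule is exactly recorded by this dichotomy, and the off-diagonal matrix entry $b_{ij}$ should match $\dim\Hom(X_i,X_j)-\dim\Ext^1(X_i,X_j)$ (up to the shift bookkeeping of the derived category). This is precisely the alternating combination of $\Ext$ groups promised in the ``moreover'' clause, and I would state it as the identification of $B^\vertex$ with the Euler-form matrix of the exceptional sequence.

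The second half of the statement — recovering the top half of $\widetilde B^\vertex$ as an alternating sum of $\Ext$ groups — I would prove \emph{in tandem} with the first half rather than afterward, because the matrix entries $b_{ij}$ are exactly the data needed to predict how the next mutation acts, so the two claims reinforce each other in the induction. Concretely, I expect to show that for a noncrossing exceptional sequence $(V_1,\dots,V_n)$ the entry $b_{ij}^\vertex$ equals $\sum_r (-1)^r \bigl(\dim \Ext^r(V_i,V_j) - \dim\Ext^r(V_j,V_i)\bigr)$, i.e.\ the skew-symmetrized (more precisely skew-symmetrizable) Euler form, deferring the precise normalization to Theorem~\ref{GoodStuffRepThry}. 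One must be careful that mutation can push an object across the boundary between $\S$ and $\S[-1]$, at which point its Grothendieck class changes sign; verifying that this sign change is exactly compensated by the change in the mutation rule is the delicate bookkeeping.

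\textbf{The main obstacle} will be the inductive step, specifically the verification that the braid-type mutation of exceptional sequences is \emph{compatible with the noncrossing condition} — that mutating a noncrossing sequence again produces a noncrossing one and keeps each object inside $\S\cup\S[-1]$. Because a naive reflection of exceptional objects in $D^b(\S)$ can produce objects spread across several shifts $\S[k]$, the real content is to show that the noncrossing hypotheses force the mutated objects back into $\S$ or $\S[-1]$, with the shift governed precisely by the sign of the relevant $c$-vector entry. I anticipate that proving this requires the hereditary structure of $\S$ (so that $\Ext^{\geq 2}$ vanishes in $\S$ and the derived category is transparent, as recalled in the appendix) together with a careful case analysis on whether the two objects being mutated lie in the same shift or in adjacent shifts. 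Reconciling the combinatorial mutation sign conventions of $\widetilde B$ with the homological sign conventions of the shift functor is where I expect the bulk of the technical care to be needed.
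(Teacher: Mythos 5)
Your inductive matching of reflection formulas against the $c$-vector mutation rule is in principle sound --- it amounts to inlining the proof of the main theorem of \cite{RS3}, which the paper instead invokes as a black box after verifying the Base, Reflection, and Euler conditions (Theorems~\ref{NonCrossFramework} and~\ref{GoodStuffRoots}). But there are two genuine gaps. First, you model the inductive step as a single adjacent-pair braid move $(X_i,X_{i+1})\mapsto(X_{i+1},Y)$. That is not the right operation: under one matrix mutation at $i$, \emph{every} $c$-vector $c_k$ with $\omega([V_i],[V_k])>0$ changes, so the corresponding move on sequences must braid $V_i$ over the entire block $J_2$ of objects $V_k$ with $\Ext^1(V_i,V_k)\neq 0$ or $\Ext^2(V_i,V_k)\neq 0$ (then shift $V_i$ by $[-1]$), after commutation moves arranging the sequence as $J_1$, then $V_i$, then $J_2$, then $J_3$. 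The availability of those commutations is not automatic; the paper's Lemma~\ref{CanMutate} needs acyclicity of the Hom-Ext quiver (Proposition~\ref{acyclicity}, imported from \cite{mmm}) to order $J_2\cap\S[-1]$ before $J_3$, plus Lemmas~\ref{isom}, \ref{equiv} and~\ref{vanishok} to verify the result is again noncrossing. Moreover, for your correspondence to be consistent along $\Tree$ you need noncrossing mutation at a given object to be involutive up to commutation, i.e., \emph{uniqueness} of the noncrossing mutation (Proposition~\ref{OneMutate}); the paper derives this from Hubery's theorem on the completed tilting complex (Theorem~\ref{connected}, from \cite{hub}) transported through the bijection $\murev$ with cluster exceptional sequences (Theorem~\ref{bijection}, from \cite{art}). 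This is not a local computation in the derived category, and nothing in your plan supplies it.

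Second, and more seriously, your induction proves only one direction of the theorem. Propagating sequences outward from $\vertex_b$ shows that the $c$-vectors at each vertex of $\Tree$ are the classes of \emph{some} noncrossing sequence; it does not show that \emph{every} noncrossing exceptional sequence arises from some $\vertex\in\Tree$, which is the other half of the ``if and only if.'' For that you need connectivity of the exchange graph $G$ of commutation classes of noncrossing sequences under noncrossing mutation, so that the covering $\pi\colon\Tree\to G$ is surjective --- again Hubery's connectivity theorem via $\murev$ (the Corollary following Lemma~\ref{EulerLem}). Relatedly, your stated goal of a ``bijection'' between vertices of $\Tree$ and noncrossing sequences cannot be right: the map is a covering, with infinitely many vertices over each commutation class. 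A smaller point: you invoke sign-coherence as ``already established,'' which would be circular, since in the paper it is a \emph{consequence} of Theorem~\ref{repdesc}; in your setup it should instead be carried as part of the inductive hypothesis, falling out of the requirement that all terms lie in $\S\cup\S[-1]$, and your verification that the signs in the mutation rule track this dichotomy is exactly the content of the Reflection condition (Lemma~\ref{RefLem} together with Lemma~\ref{MutationReflection}).
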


\begin{remark} There is at most one exceptional object of $\S \cup \S[-1]$ in a given $K_0$-class, so this exceptional sequence is unique up to reordering. \end{remark}

Let $M$ be an indecomposable object of $\S$.  Then $[M]$ is a 
non-negative linear combination of the classes $[S_1],\dots,[S_n]$, and
$[M[i]]=(-1)^i[M]$, so $[M[i]]$ is sign-coherent.  
Therefore, Theorem \ref{repdesc} implies in particular that the
$c$-vectors are sign-coherent.  This is the essential ingredient required
for the machinery developed by Nakanishi and Zelevinsky in \cite{NZ} to 
be applicable.  Given this fact, they provide a (suprisingly short and
elementary) deduction of Conjectures 1.1--1.4 of \cite{DWZ}
(reformulating conjectures of Fomin and Zelevinsky from \cite{CA4}).

\begin{corollary}
Let $v_0$ be an acyclic seed of a cluster algebra and let $v_1$ be some other seed.
Conjectures~1.1, 1.2 and~1.4 of~\cite{DWZ} hold with $t_0=v_0$ and $t=v_1$. 
Conjecture~1.3 of~\cite{DWZ} holds with $t_0=v_1$ and $t=v_0$.
\end{corollary}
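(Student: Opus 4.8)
The plan is to deduce the Corollary from Theorem~\ref{repdesc} via the machinery of Nakanishi and Zelevinsky~\cite{NZ}. The essential point, as the surrounding text emphasizes, is that sign-coherence of the $c$-vectors is the sole input required for~\cite{NZ} to apply, and sign-coherence is now in hand. So the proof is really a two-part affair: first extract sign-coherence as a consequence of Theorem~\ref{repdesc}, then cite~\cite{NZ} to convert it into the four conjectures of~\cite{DWZ}. I would begin by recalling the reduction already sketched in the text: if $(V_1,\dots,V_n)$ is a noncrossing exceptional sequence, then each $V_i$ is of the form $M[i]$ with $M$ an indecomposable object of $\S$, so $[M]$ is a non-negative combination of $[S_1],\dots,[S_n]$ and $[V_i]=(-1)^i[M]$ is sign-coherent. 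By Theorem~\ref{repdesc} the $c$-vectors of any $\vertex\in\Tree$ are exactly such classes, hence sign-coherent.

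First I would set up the translation of hypotheses between the two papers. The seed $v_0$ is acyclic, so it corresponds to the root vertex $\vertex_b$ with $\widetilde B^0$ acyclic skew-symmetrizable, which is precisely the setting in which our categorification $\S$ and Theorem~\ref{repdesc} operate. The seed $v_1$ is an arbitrary seed, corresponding to some vertex $\vertex\in\Tree$. The four conjectures of~\cite{DWZ} concern the behaviour of $c$-vectors, $g$-vectors, $F$-polynomials, and the exchange-matrix/$c$-vector relations as one mutates away from a fixed initial seed $t_0$. The asymmetry in the statement -- Conjectures 1.1, 1.2, 1.4 with $t_0=v_0,\ t=v_1$, but Conjecture 1.3 with the roles reversed -- reflects exactly the way~\cite{NZ} organizes these consequences: three of the conjectures are statements about expansions based at the acyclic seed, while 1.3 (the tropical-sign/duality statement relating $c$- and $g$-vectors under swapping initial and target seeds) is naturally phrased with the acyclic seed as the target. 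I would state these identifications precisely and then invoke the corresponding numbered results of~\cite{NZ} verbatim.

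The substance of the argument, then, is simply: apply Theorem~\ref{repdesc} to conclude sign-coherence at the acyclic initial seed, and feed this into the implications of~\cite{NZ}, which deduce Conjectures 1.1--1.4 of~\cite{DWZ} in an elementary way from sign-coherence alone. I expect the main obstacle to be purely bookkeeping rather than mathematical: one must verify that~\cite{NZ} really does require only sign-coherence at the specific initial seed we control (the acyclic one), and that their hypotheses do not secretly demand sign-coherence for all initial seeds or some stronger positivity. Since the acyclic seed is where our representation theory lives, the deduction goes through only for the choices of $t_0$ and $t$ recorded in the Corollary; I would flag that this is the reason for the precise placement of $v_0$ and $v_1$ in each conjecture. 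Beyond confirming this matching of hypotheses, no further argument is needed, so the proof will be short.
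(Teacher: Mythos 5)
Your proposal is correct and takes essentially the same route as the paper: the paper likewise offers no further argument beyond noting that Theorem~\ref{repdesc} forces each $c$-vector to be $\pm$ the class of an indecomposable object of $\S$ (hence sign-coherent) and then invoking the elementary machinery of Nakanishi and Zelevinsky~\cite{NZ} to deduce Conjectures~1.1--1.4 of~\cite{DWZ}, with exactly the placement of $t_0$ and $t$ you describe. Your bookkeeping caveat --- that~\cite{NZ} needs sign-coherence only at the acyclic initial seed, which is the one the categorification controls --- is precisely the point implicit in the paper's statement.
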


These conjectures were already known to hold in this case.  For acyclic skew-symmetric
cluster algebras (among others) they were first shown by Fu and Keller \cite{FK}.  They
have subsequently been shown for arbitrary skew-symmetric cluster algebras by 
\cite{DWZ,Pla}. They were established for a subset of skew-symmetrizable cluster algebras
including the acyclic cluster algebras by \cite{dem} (extending 
techniques of \cite{DWZ}).  The conjectures were also proved by Nagao
\cite{Nag} in the skew-symmetric case under an additional technical 
assumption.  
These papers all use heavy 
machinery of some kind: \cite{FK,Pla} use general 2-Calabi-Yau triangulated
categories, \cite{DWZ,dem} use representations of quivers with potentials,
and \cite{Nag} uses Donaldson-Thomas theory.  We prove less, but 
get away with a lighter theoretical structure --- essentially just the
representation theory of hereditary algebras, mainly drawing on 
\cite{rin2}.

The idea of using Nagao's approach to understand acyclic
(skew-symmetric) cluster algebras has been carried out in \cite{KQ}.
Like the present paper, that paper also focusses on the
collections of objects which we view as noncrossing exceptional
sequences, but from a somewhat different perspective.

\subsection{The combinatorial characterization of $c$-vectors} \label{combcharsection}

In this section, we state
a necessary and sufficient combinatorial
criterion for a collection of vectors to be the $c$-vectors associated
to some $v\in\Tree$.  

There is a symmetric bilinear form $(\ ,\ )$ on $K_0(\S)$, given by
$$([A], [B]) = \sum (-1)^j \dim_{\kk} \Ext^j(A,B) + \sum (-1)^j \dim_{\kk} \Ext^j(B,A).$$
We will meet a nonsymmetric version of this form, called $E(\ ,\ )$, in Section~\ref{decategorify}.

There is a reflection group $W$ which acts naturally on the Grothendieck group
preserving this symmetrized form.  It is generated by the reflections
$s_i$ corresponding to the classes of the simple objects $[S_i]$. 
For any exceptional object $E$, the group $W$ contains the reflection 
$$t_{[E]}(v)=v-\frac{2([E],v)}{([E],[E])}[E].$$

This gives rise to a root system inside $K_0(\S)$, consisting of all 
elements of the form $w[S_i]$ for $w\in W$ and $1\leq i \leq n$.  

\begin{theorem}\label{characterize} 
A collection of $n$ vectors $v_1,\dots,v_n$ in 
$\mathbb Z^n$ is the 
set of $c$-vectors for some cluster if and only if:
\begin{enumerate}
\item[(1)] The vectors $v_i$ are roots in the root system associated to
$\S$.
\item[(2)] If $v_i,v_j$ are both positive roots or both negative roots, then 
$(v_i,v_j)\leq 0$.  
\item[(3)] It is possible to order the vectors so that the positive vectors
precede the negative vectors, and the product of the reflections corresponding
to these vectors, taken in this order, equals $s_1\dots s_n$.  
\end{enumerate}
\end{theorem}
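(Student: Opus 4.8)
The plan is to deduce Theorem~\ref{characterize} from the representation-theoretic description in Theorem~\ref{repdesc}. By that theorem, a collection of vectors is the set of $c$-vectors for some cluster precisely when it is the set of $K_0$-classes of a noncrossing exceptional sequence $(X_1,\dots,X_n)$ with each $X_i$ in $\S\cup\S[-1]$. So the real work is to show that the three combinatorial conditions (1)--(3) characterize exactly those collections. I would prove this by establishing a bijection (up to reordering) between noncrossing exceptional sequences and collections of vectors satisfying (1)--(3), translating each representation-theoretic property into its combinatorial counterpart via the symmetrized form $(\ ,\ )$ and the reflection group $W$.

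First I would handle condition~(1): each $X_i$ is an exceptional object of $\S\cup\S[-1]$, and for such an object the reflection $t_{[X_i]}$ lies in $W$, so $[X_i]$ is a root; conversely the roots are exactly the $W$-orbit of the simple classes, and I would need to argue that every positive (resp.\ negative) real root is the class of a unique exceptional object of $\S$ (resp.\ $\S[-1]$)---this is the content of the remark following Theorem~\ref{repdesc} and standard hereditary theory. Next, for condition~(2), I would unwind the noncrossing condition $\Hom(X_i,X_j)=0=\Ext^{-1}(X_i,X_j)$ for $i\ne j$ together with exceptionality. When $X_i,X_j$ both lie in $\S$ (both positive roots), the vanishing $\Hom(X_i,X_j)=\Hom(X_j,X_i)=0$ forces the symmetrized form $([X_i],[X_j])=\sum_k(-1)^k\dim\Ext^k(X_i,X_j)+(\text{sym})$ to reduce to $-\dim\Ext^1(X_i,X_j)-\dim\Ext^1(X_j,X_i)\le 0$, using that $\S$ is hereditary so only $\Ext^0,\Ext^1$ survive; the mixed and both-negative cases follow by the shift isomorphism $\Ext^r(M[i],N[j])\cong\Ext^{r-i+j}(M,N)$. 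The key point is that being noncrossing is exactly what kills the $\Ext^0$ contributions and leaves the form non-positive.

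For condition~(3) I would use the factorization of the Coxeter element. An exceptional sequence of length $n$ gives, via the reflections $t_{[X_i]}$, a factorization of a fixed element of $W$; the standard fact (for complete exceptional sequences over a hereditary algebra) is that the ordered product $t_{[X_n]}\cdots t_{[X_1]}$ of these reflections equals the Coxeter element $c=s_1\cdots s_n$, independent of the sequence. I would invoke this together with the noncrossing normalization to arrange the positive roots before the negative roots; the ordering within the positive part (and within the negative part) is the one coming from the $\Hom$/$\Ext^{-1}$ vanishing, which orients the sequence. The nontrivial direction---that conditions (1)--(3) are \emph{sufficient}---is where I expect the main obstacle: given abstract roots satisfying (1)--(3), I must reconstruct an honest noncrossing exceptional sequence whose classes they are. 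Here the strategy is to use condition~(3) to factor $s_1\cdots s_n$ as a product of $n$ reflections $t_{v_1}\cdots t_{v_n}$, lift each $v_i$ to its unique exceptional object $X_i$, and then show that a factorization of the Coxeter element into exactly $n$ reflections forces the corresponding objects to form a complete exceptional sequence, with the non-positivity of the form (condition (2)) guaranteeing the noncrossing $\Hom$/$\Ext^{-1}$ vanishing.

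The main obstacle, then, is this sufficiency/reconstruction step: relating minimal-length reflection factorizations of the Coxeter element to complete exceptional sequences. The clean statement I would want is that an ordered tuple of reflections whose product is $c$ and whose number equals $n$ (the absolute length of $c$) corresponds bijectively to a complete exceptional sequence, with the noncrossing condition matching the requirement that each factor's root be positive-then-negative and pairwise non-positive under the form. I expect to lean on Ringel's results on exceptional sequences and on the theory of $\mathsf{NC}(c)$-type factorizations; the delicate part is ensuring the lift from roots to objects is consistent across the whole tuple (so that the $\Ext$-vanishing conditions defining an exceptional sequence hold simultaneously, not just pairwise), which I would control using condition~(2) and the hereditary structure of $\S$.
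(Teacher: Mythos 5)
Your proposal is correct and follows essentially the same route as the paper: reduce via Theorem~\ref{repdesc} to noncrossing exceptional sequences, get necessity by unwinding the noncrossing and hereditary conditions (so only $\Ext^1$ survives in the symmetrized form), and get sufficiency by lifting a length-$n$ reflection factorization of $c=s_1\cdots s_n$ to a complete exceptional sequence, shifting each term into $\S\cup\S[-1]$ so its class is $v_i$, and using condition~(2) together with Lemma~\ref{onlyone} to force the $\Hom$ and $\Ext^{-1}$ vanishing. The ``clean statement'' you flag as the main obstacle is precisely the known theorem of Igusa and Schiffler~\cite{IS}, which the paper simply cites; since it delivers the entire exceptional sequence at once, your concern about making the root-to-object lift consistent across the whole tuple (rather than merely pairwise) does not arise.
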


\subsection{Compatibility of notation with the authors' other work} \label{signsec}

Both authors have written several other papers related to the present work. 
The notations in this paper are entirely compatible with the first author's notations in~\cite{RS1} and~\cite{RS3}. 
In order to achieve this, it is necessary in Section~\ref{decategorify} to define $E([X], [Y])$ to be $\sum (-1)^r \dim \Ext^r(Y,X)$, rather than the more natural seeming $\sum (-1)^r \dim \Ext^r(X,Y)$. Note that these two papers never refer to a quiver, so the choice of which oriented quiver corresponds to a given $B$-matrix is not established in those papers. 
In~\cite{RS2}, the opposite relationship between $B$-matrices and quivers is chosen.
So this paper is compatible with~\cite{RS1} and~\cite{RS3}, and~\cite{RS2} is likewise compatible with~\cite{RS1} and~\cite{RS3}, but this paper is not compatible with~\cite{RS2}.

The notations in this paper are entirely compatible with the second author's work in~\cite{mmm} and~\cite{art}, except for a minor difference noted
in Section \ref{murev}.

\section{Valued quivers and exceptional sequences}

In this section, we explain the representation-theoretic objects which we will use.

\subsection{Definition of the category $\S$}\label{defS}
Our fixed integer matrix $B^0$ is skew-symmetrizable, which means that 
$-(B^0)^TD=DB^0$ for some positive integer diagonal matrix $D$, with diagonal 
entries $d_1,\dots,d_n$.  For convenience, we assume that the enties
$(B^0)_{ij}=b^0_{ij}$ are positive when $i<j$.    


We will use this data to construct a $\kk$-linear category $\S$, for some field $\kk$.
For us, the internal structure of the objects of $\S$ is irrelevant.
What is important is that 
\begin{enumerate}
\item $\S$ is a hereditary category which has a simple object $S_i$ for each vertex of our quiver 
\item $K_i := \End(S_i)$ is a field, with $\dim_{\kk} K_i = d_i$ 
\item $\dim_{K_i} \Ext^1(S_i, S_j) = b^0_{ij}$ for $i<j$  and, thus, $\dim_{K_j}  \Ext^1(S_i, S_j) = -b^0_{ji}$
\item $\Ext^1(S_i, S_j) = 0$ for $i \geq j$.
\end{enumerate}

The category $\mathcal S$ 
will be the representations of a certain valued quiver
$Q$. We provide a whirlwind description of valued quivers; for a more in depth discussion, see~\cite[Chapter 3]{DDPW}.
When $B$ is skew-symmetric, and $d_1=\cdots=d_n=1$, this is the more familiar construction of representations of a standard quiver.

\def\LCM{\mathrm{LCM}}
Let $\kk$ be a field for which we can make the following constructions: Let $K_i$ be an extension of $\kk$ of degree $d_i$ and let $E_{ij}$ be a $K_i \otimes_\kk K_j$-bimodule which has dimension $b^0_{ij}$ over $K_i$.
One way to achieve this is to take $\kk = \mathbb{F}_{p}$ and $K_i = \mathbb{F}_{p^{d_i}}$. Then let $E_{ij}$ be a $\mathbb{F}_{p^{\LCM(d_i, d_j)}}$ vector space of dimension $d_i b^{0}_{ij}/ \mathrm{LCM}(d_i, d_j)$ and let $K_i$ and $K_j$ act on $E_{ij}$ by the embeddings of $K_i$ and $K_j$ into $\mathbb{F}_{p^{\LCM(d_i, d_j)}}$. 

A representation of $Q$ consists of a $K_i$ vector space $V_i$, associated to the vertex $i$ of our quiver, and a map $E_{ij} \to \mathrm{Hom}(V_i, V_j)$ which is both $K_i$-linear and $K_j$-linear.
The category of such representations is $\S$.

%

\subsection{Examples of noncrossing exceptional sequences} \label{Examples}

Recall the definition of noncrossing exceptional sequences from subsection~\ref{CatSummary}.

\begin{example} \label{RunningExample}
Consider the quiver $v_1 \to v_2 \to v_3 \to v_4 \to v_5$. 
We write $\alpha_i$ for the dimension vector of the simple object $S_i$.
Consider the sequence of roots $(\alpha_1, \alpha_2+\alpha_3, \alpha_4 + \alpha_5, - \alpha_2, - \alpha_4)$. 
This sequence obeys the conditions of Theorem~\ref{characterize}. 

The corresponding sequence of objects in $D^b(\S)$ is $(S_1, A_{23}, A_{45}, S_2[-1], S_4[-1])$, where $A_{i(i+1)}$ is the quiver representation which has one dimensional vector spaces in positions $i$ and $i+1$ and a nonzero map between them. 
There are six nontrivial $\Ext$ groups:
\[ \begin{array}{l l l l}
\Ext^1(S_1, A_{23}) & & \Ext^2(S_1, S_2[-1]) &\cong \Ext^1(S_1, S_2) \\
\Ext^1(A_{23}, A_{45}) & & \Ext^1(A_{23}, S_{2}[-1]) &\cong \Hom(A_{23}, S_2) \\
\Ext^2(A_{23}, S_4[-1]) &\cong \Ext^1(A_{23}, S_4)\quad & \Ext^1(A_{45}, S_4[-1]) & \cong \Hom(A_{45}, S_4) \\
\end{array} \]
It is now easy to verify that this sequence is exceptional and noncrossing.

%
\end{example}

\begin{example} \label{A2Example}
Consider the representations of the quiver $v_1 \to v_2$. There are three indecomposable representations: the simple modules $S_1$ and $S_2$, and one other which we call $A_{12}$. The noncrossing exceptional sequences are
$$(S_1, S_2),\ (A_{12}, S_1[-1]),\ (S_2, A_{12}[-1]),\ (S_1[-1], S_2[-1]),\ (S_1, S_2[-1]).$$
\end{example}

\section{Background on cluster algebras}\label{ca_background}

The first ingredient for a cluster algebra is an $n\times m$ matrix $B$, with
$m\geq n$, such that the {\it principal part}, the first $n$ rows of the 
matrix, is skew-symmetrizable.  
Second, we start with a collection $x_1,\dots,x_m$ of algebraically independent 
indeterminates in a field $\mathcal F$.  
We assign $(B,(x_1,\dots,x_m))$ to a vertex
$\vertex_b$ of an infinite $n$-regular tree $\Tree$.  If $\vertex'$ is adjacent, along an edge labelled $i$, 
to a vertex 
$\vertex$ labelled by $(B^\vertex,(x^\vertex_1,\dots,x^\vertex_m))$, the {\it mutation} rule tells us how 
to calculate $B^{\vertex'}=\mu_i(B^{\vertex})$ and $x_1^{\vertex'},\dots,x_m^{\vertex'}$.  

In this paper, we will not need to make direct reference to the cluster
variables $x_i^\vertex$ themselves, and so we will only discuss the mutation rule for
matrices. 
That rule is as follows:

$$\mu_i(B)_{jk} = \left\{ 
\begin{array}{ll} -B_{jk} & \textrm { if $j=i$ or $k=i$} \\
                  B_{jk} + [B_{ji}]_+[B_{ik}]_+ - [B_{ji}]_-[B_{ik}]_-
& \textrm{ otherwise}\end{array} \right.$$
where $[a]_+=\max(a,0)$ and $[a]_-=\min(a,0)$.  

We are, in particular, interested in the following situation.  Let $B^0$ 
be (as we have already supposed) an $n\times n$ skew-symmetrizable matrix,
and let $\widetilde B^0$ be the 
the $2n\times n$ matrix whose top half
is $B^0$, and whose bottom half is the $n\times n$ identity matrix.  We
assign this matrix to the vertex $\vertex_b$ of $\Tree$.
The mutation rule now assigns to each vertex $\vertex$ 
of $\Tree$ some $2n\times n$ matrix $B^\vertex$.  The $i$-th column of the bottom half of this
matrix is denoted $c_i^\vertex$.  

\begin{example} \label{trivial}
We illustrate these ideas by listing the matrices which are obtained
for the $2\times 2$ skew-symmetric matrix $B^0$ with $B^0_{12}=-B^0_{21}=1$.  
We start with $\widetilde{B}^0$, and proceed to mutate
alternately at the two possible positions, starting with the first.  
$$\left[\begin{array}{rr} 0&1\\
                        -1&0\\
                        \hline
                         1&0\\
                         0&1\end{array}\right] \to 
\left[\begin{array}{rr} 0&-1\\
                        1&0\\
                        \hline
                        -1&1\\
                         0&1\end{array}\right]  \to 
\left[\begin{array}{rr} 0&1\\
                        -1&0\\
                        \hline
                        0&-1\\
                        1&-1\end{array}\right] \to 
\left[\begin{array}{rr} 0&1\\
                        -1&0\\
                        \hline
                        0&-1\\
                        -1&0\end{array}\right]  \to 
\left[\begin{array}{rr} 0&-1\\
                        1&0\\
                        \hline
                        0&1\\
                        -1&0\end{array}\right]$$
                        Note that the columns of the bottoms of these matrices correspond to the dimension vectors of the terms in the noncrossing sequences from Example~\ref{A2Example}, but that the exceptional ordering of that example is not always the order of the columns of the matrix.   
\end{example}

\section{Background on exceptional sequences}\label{recall}



\subsection{The mutation operators} \label{mutation}

Recall the following lemma:
\begin{lemma}[{\cite[Lemma 1.2]{art}}]\label{onlyone} If $(E,F)$ is an exceptional sequence, there is at most one
$j$ such that $\Ext^j(E,F)\ne 0$.  
\end{lemma}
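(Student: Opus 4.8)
The plan is to reduce the statement to a question about the hereditary category $\S$ and then to a careful analysis of a single nonzero homomorphism. Since $\S$ is hereditary, the two exceptional objects split as shifts: $E \isom M[a]$ and $F \isom N[b]$ for indecomposable $M, N \in \S$ with $\Ext^1(M,M) = \Ext^1(N,N) = 0$ (these are in fact bricks, though the arguments below will only use the self-$\Ext$ vanishing). By the formula $\Ext^r(M[a],N[b]) \cong \Ext^{r-a+b}_\S(M,N)$ together with heredity, $\Ext^r(E,F)$ vanishes unless $r - a + b \in \{0,1\}$; thus the only two degrees that can contribute are $r=a-b$, carrying $\Hom_\S(M,N)$, and $r=a-b+1$, carrying $\Ext^1_\S(M,N)$. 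So it suffices to rule out the possibility that $\Hom_\S(M,N)$ and $\Ext^1_\S(M,N)$ are simultaneously nonzero. The exceptional-sequence hypothesis $\Ext^\bullet(F,E)=0$ translates, by the same formula, into the orthogonality $\Hom_\S(N,M) = \Ext^1_\S(N,M) = 0$.

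The heart of the matter is then: for indecomposables $M, N \in \S$ with $\Ext^1(M,M)=\Ext^1(N,N)=0$ and $\Hom(N,M)=\Ext^1(N,M)=0$, one cannot have both $\Hom(M,N)\ne 0$ and $\Ext^1(M,N)\ne 0$. Suppose for contradiction both are nonzero and pick a nonzero $f \colon M \to N$. The key device is that self-$\Ext$ vanishing propagates through short exact sequences. If $f$ is injective, with cokernel $C$, then applying $\Hom(-,N)$ to $0 \to M \to N \to C \to 0$ produces the exact segment $\Ext^1(N,N) \to \Ext^1(M,N) \to 0$, and $\Ext^1(N,N)=0$ forces $\Ext^1(M,N)=0$, a contradiction. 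Dually, if $f$ is surjective, with kernel $K$, applying $\Hom(M,-)$ to $0 \to K \to M \to N \to 0$ gives $\Ext^1(M,M)\to\Ext^1(M,N)\to 0$, and $\Ext^1(M,M)=0$ again forces $\Ext^1(M,N)=0$.

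The remaining, and genuinely harder, case is when $f$ has proper image $I$, so we have $0 \to K \to M \to I \to 0$ and $0 \to I \to N \to C \to 0$ with $K,C\ne 0$ and $\dim_\kk I$ strictly smaller than $\dim_\kk M$ and $\dim_\kk N$. Applying the two vanishing arguments above to the epimorphism $M \twoheadrightarrow I$ and the monomorphism $I \hookrightarrow N$ yields $\Ext^1(M,I)=0$ and $\Ext^1(I,N)=0$; feeding these back through $\Hom(M,-)$ of the second sequence and $\Hom(-,N)$ of the first shows that $\Ext^1(M,N)$ injects into both $\Ext^1(M,C)$ and $\Ext^1(K,N)$. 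I would then run an induction on $\dim_\kk M + \dim_\kk N$, with the two strictly smaller instances being the pair $(K,N)$, where $K\subseteq M$, and the pair $(M,C)$, where $C$ is a quotient of $N$; the orthogonality $\Hom(N,M)=\Ext^1(N,M)=0$ should restrict to the corresponding orthogonality for these smaller pieces, so that the inductive hypothesis yields $\Ext^1(K,N)=0$ or $\Ext^1(M,C)=0$, and hence $\Ext^1(M,N)=0$.

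I expect this last step to be the main obstacle. The difficulty is that $K$ and $C$ need not be indecomposable, so the inductive statement must be reformulated for more general objects and the orthogonality hypotheses tracked carefully through the four short exact sequences. An alternative that sidesteps the image filtration is to invoke Auslander--Reiten duality for $\S$, which rewrites $\Ext^1(M,N) \cong D\,\Hom(N,\tau M)$ and $\Ext^1(N,M)\cong D\,\Hom(M,\tau N)$; combined with $\Hom(M,\tau M) \cong D\,\Ext^1(M,M) = 0$ (and disposing of the projective case, where $\Ext^1(M,-)=0$ directly) this forces the composite $M \to N \to \tau M$ to vanish, after which one must still extract the contradiction. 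Deciding which of these two routes is cleaner, and handling the fact that in the valued setting all the $\Hom$- and $\Ext$-spaces are bimodules over the division rings $\End(M)$ and $\End(N)$, is the crux of writing up the proof.
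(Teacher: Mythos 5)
The paper itself does not prove this lemma---it is quoted from \cite[Lemma 1.2]{art}---so your attempt should be measured against the standard argument given there. Your reduction is correct: writing $E \isom M[a]$, $F \isom N[b]$ and using heredity, everything comes down to showing that an indecomposable exceptional pair $M,N \in \S$ with $\Hom(N,M)=\Ext^1(N,M)=0$ cannot have both $\Hom(M,N)\neq 0$ and $\Ext^1(M,N)\neq 0$, and your injective and surjective cases are exactly right. The genuine gap is your third case, and it is structural, not just a matter of bookkeeping. Your inductive hypothesis has the wrong shape: for the pair $(K,N)$ it only forbids \emph{simultaneous} nonvanishing of $\Hom(K,N)$ and $\Ext^1(K,N)$, and you have no way to verify $\Hom(K,N)\neq 0$---indeed the natural composite $K \to M \xrightarrow{f} N$ is zero, since $K = \ker f$---so the induction yields no conclusion whatsoever about $\Ext^1(K,N)$, and symmetrically nothing about $\Ext^1(M,C)$. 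Moreover, the orthogonality transfer you hope for is both unautomatic and beside the point: applying $\Hom(N,-)$ to $0 \to K \to M \to I \to 0$ gives $\Ext^1(N,K) \isom \Hom(N,I)$ (using $\Hom(N,M)=\Ext^1(N,M)=0$), which vanishes only once one knows $\End(N)$ is a division ring---a fact whose standard proof already requires the very dichotomy you are missing---and in any case what your injections $\Ext^1(M,N)\hookrightarrow \Ext^1(K,N)$ and $\Ext^1(M,N)\hookrightarrow \Ext^1(M,C)$ require is control of the $\Ext$ groups in the \emph{other} direction, about which the descended orthogonality says nothing.

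The missing idea is the Happel--Ringel lemma \cite{hr}: if $M$ and $N$ are indecomposable over a hereditary Artin algebra (the valued setting included) and $\Ext^1(N,M)=0$, then every nonzero map $M \to N$ is injective or surjective. Your exceptional-sequence hypothesis supplies precisely $\Ext^1(N,M)=0$, so the proper-image case you struggle with simply never occurs, and the two cases you have already completed (using $\Ext^1(N,N)=0$, respectively $\Ext^1(M,M)=0$, together with $\Ext^2_{\S}=0$) constitute the entire proof. This mono-or-epi route is exactly how \cite[Lemma 1.2]{art} proceeds. Your proposed detour through Auslander--Reiten duality can also be made to work, but once the Happel--Ringel dichotomy is in hand it is unnecessary, as is the bimodule bookkeeping you were worried about.
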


There are well-known mutation operations on exceptional sequences which we will now recall.  
The operator $\mu_i$ acts on an exceptional sequence whose $i$-th and $i+1$-st terms
are $X_i$ and $X_{i+1}$ by replacing the subsequence $(X_i,X_{i+1})$ by
$(X_{i+1},Y)$, where $Y$ is determined by $X_i$ and $X_{i+1}$.
We also describe $\mu_i$ as ``braiding $X_{i+1}$ in front of $X_i$''.  
This is intended to suggest a braid diagrams: in a braid diagram, the front 
string
is drawn unbroken, while the back string is drawn broken just as $X_{i+1}$ moves past $X_i$ and changes $X_i$, while remaining unchanged itself.
There are also inverses of the $\mu_i$: the operation $\mu_i^{-1}$ braids 
$X_i$ in front of $X_{i+1}$.

\begin{remark} Only certain exceptional sequences have $B$-matrices associated to them and mutating such an exceptional sequence does not always produce another such.
So the use of these mutation operators does not always correspond to a mutation of $B$-matrices.
The terminology ``mutation" is very standard in both cases.
\end{remark}

We now define $\mu_i$ and $\mu_i^{-1}$ precisely.  

If $\Ext^j(X_i, X_{i+1})=0$ for all $j$, then $\mu_i$ switches $X_i$ and $X_{i+1}$.

Suppose now that $k$ is the unique index such that $\Ext^k(X_i, X_{i+1}) \neq 0$, so $\Hom(X_i, X_{i+1}[k]) \neq 0$. 
Let $H:= \Hom(X_i, X_{i+1}[k])$ and let $H^{\vee}$ be the dual vector space. Then we have a universal map $X_i \to X_{i+1}[k] \otimes H^{\vee}$, called the ``left thick $X_{i+1}$ approximation to $X_i$.''
We complete this to a triangle
$$Y \rightarrow X_i \rightarrow X_{i+1}[k] \otimes_{\End(X_{i+1})} H^{\vee} \rightarrow.$$
Then $\mu_i$ replaces the subsequence $(X_i,X_{i+1})$ by $(X_{i+1},Y)$.
Similarly, we have a right thick approximation $X_i[-k] \otimes H \to X_{i+1}$.
Complete this to a triangle
$$X_i[-k] \otimes_{\End(X_{i})} H \to X_{i+1} \to Z \to.$$
The operation $\mu_i^{-1}$ replaces $(X_i, X_{i+1})$ by $(Z, X_i)$. 

The operations $\mu_i$ and $\mu_i^{-1}$ are morally inverse.
More precisely, $\mu_i \mu_i^{-1}$ replaces $(X_i, X_{i+1})$ by $(X'_i, X_{i+1})$ where $X'_i$ is isomorphic to $X_i$ in $D^b(\S)$. Similarly, $\mu_i^{-1} \mu_i$ replaces $(X_i, X_{i+1})$ by $(X_i, X'_{i+1})$ where $X'_{i+1}$ is likewise isomorphic to $X_{i+1}$.
The operation of completing to a triangle is only defined up to isomorphism, so this is the best statement we can make; readers with experience in triangulated categories will be familiar with the subtleties here. 
A paper which treats this carefully is~\cite{GK}; for our present purposes, we can ignore this issue and treat $\mu_i$ and $\mu_i^{-1}$ as inverse.

\begin{lemma} \label{MutationReflection}
We have $[Y]=t_{[X_{i+1}]} [X_i]$ and $[Z] = t_{[X_i]} [X_{i+1}]$, where $t_{[E]}$ is the reflection defined in Section~\ref{combcharsection}. 
\end{lemma}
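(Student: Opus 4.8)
The plan is to compute the classes $[Y]$ and $[Z]$ directly from the defining triangles, using the additivity of $[\,\cdot\,]$ on triangles together with the explicit formula for the reflection $t_{[E]}$ stated in Section~\ref{combcharsection}. I will treat the case where some $\Ext^k(X_i,X_{i+1})\neq 0$; the case where all $\Ext$ groups vanish is immediate, since then $\mu_i$ and $\mu_i^{-1}$ merely transpose $X_i$ and $X_{i+1}$, so $Y=X_i$ and $Z=X_{i+1}$, and the claim reduces to the fact that $t_{[X_{i+1}]}[X_i]=[X_i]$ and $t_{[X_i]}[X_{i+1}]=[X_{i+1}]$ whenever $([X_{i+1}],[X_i])=0$, which follows from Lemma~\ref{onlyone} and the symmetry of $(\ ,\ )$.

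For the main case, I would first take the triangle
$$Y \rightarrow X_i \rightarrow X_{i+1}[k] \otimes_{\End(X_{i+1})} H^{\vee} \rightarrow$$
and apply $[\,\cdot\,]$. Additivity on triangles gives
$$[Y] = [X_i] - [X_{i+1}[k] \otimes_{\End(X_{i+1})} H^{\vee}].$$
The object $X_{i+1}[k]\otimes_{\End(X_{i+1})} H^\vee$ is a direct sum of shifted copies of $X_{i+1}$, so its class is $[X_{i+1}[k]]$ times the multiplicity, which equals $(-1)^k \dim_{\End(X_{i+1})} H^\vee$ times $[X_{i+1}]$. The next step is therefore to identify the scalar $(-1)^k \dim_{\End(X_{i+1})}H$ with $\tfrac{2([X_{i+1}],[X_i])}{([X_{i+1}],[X_{i+1}])}$, which is exactly what the reflection formula requires. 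The symmetric computation on the triangle defining $Z$ yields $[Z]=[X_{i+1}]-(-1)^k(\dim_{\End(X_i)}H)[X_i]$, and here I would match the scalar against $\tfrac{2([X_i],[X_{i+1}])}{([X_i],[X_i])}$.

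The heart of the argument, and the step I expect to be the main obstacle, is this matching of scalars. Since $(X_i,X_{i+1})$ is exceptional, Lemma~\ref{onlyone} tells us $k$ is the unique degree in which $\Ext(X_i,X_{i+1})$ is nonzero and $\Ext^\bullet(X_{i+1},X_i)=0$; hence the symmetrized form reduces to a single term, $([X_{i+1}],[X_i]) = (-1)^k \dim_\kk \Ext^k(X_i,X_{i+1})$. I must convert the $\kk$-dimension appearing here into the $\End(X_{i+1})$-dimension of $H$ appearing in the tensor product, which introduces a factor of $\dim_\kk \End(X_{i+1})$; and I must independently compute $([X_{i+1}],[X_{i+1}]) = 2\dim_\kk\End(X_{i+1})$, using that $X_{i+1}$ is exceptional so $\Ext^{>0}(X_{i+1},X_{i+1})=0$ and $\Hom(X_{i+1},X_{i+1})=\End(X_{i+1})$. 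Dividing, the awkward factor of $\dim_\kk\End(X_{i+1})$ cancels, leaving precisely $(-1)^k\dim_{\End(X_{i+1})}H$, as needed. The analogous bookkeeping with $\End(X_i)$ handles $[Z]$. The only real care required is tracking which endomorphism field governs each dimension count and verifying that the division cleans up correctly in the skew-symmetrizable (non-symmetric) setting where these fields may differ.
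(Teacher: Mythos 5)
Your proposal is correct and follows essentially the same route as the paper's own proof: additivity of $[\,\cdot\,]$ on the approximation triangle, the identification $([X_i],[X_{i+1}])=(-1)^k\dim_{\kk}\Ext^k(X_i,X_{i+1})$ via Lemma~\ref{onlyone} and exceptionality, the computation $([X_{i+1}],[X_{i+1}])=2\dim_{\kk}\End(X_{i+1})$, and the cancellation of the factor $\dim_{\kk}\End(X_{i+1})$ when passing between $\kk$-dimensions and endomorphism-field dimensions. The only (harmless) difference is that you also spell out the trivial case where all $\Ext$ groups vanish and $\mu_i$ merely transposes the pair, which the paper leaves implicit.
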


\begin{proof}
We make the computation for $Y$; the case of $Z$ is similar.
We abbreviate $\End(X_{i+1})$ to $L$. 

Using additivity of dimension vectors in a triangle, we have
\[ \begin{array}{r l}
[Y] =& [X_i] - (\dim_L H^{\vee} ) \cdot [X_{i+1}[k]]  \\
 =& [X_i] - (\dim_L \Hom(X_i, X_{i+1}[k])) \cdot  (-1)^k [X_{i+1}] \\
 =& [X_i] - (-1)^k (\dim_L \Ext^k(X_i, X_{i+1})) \cdot [X_{i+1}] \\
 =&  [X_i] - (-1)^k \frac{\dim_{\kk} \Ext^k(X_i, X_{i+1})}{\dim_{\kk} L}  \cdot [X_{i+1}]
 \end{array} \]
Using Lemma~\ref{onlyone} and the definition of an exceptional sequence, we have
$$(-1)^k \dim_{\kk} \Ext^k(X_i, X_{i+1}) = ([X_i], [X_{i+1}]).$$
Since $X_{i+1}$ is exceptional, we have $\Ext^r(X_{i+1}, X_{i+1}) = 0$ for $r \neq 0$ and thus $([X_{i+1}], [X_{i+1}]) = 2 \dim_{\kk} L$.

So $[Y] = [X_i] - \frac{2 ([X_i], [X_{i+1}]) }{([X_{i+1}], [X_{i+1}]) }\cdot  [X_{i+1}] = t_{[X_{i+1}]} [X_i]$ as desired.
\qed\end{proof}

The mutation operations satisfy the braid relations, meaning that
$\mu_i\mu_j=\mu_j\mu_i$ if $|i-j|>1$ and 
$\mu_i\mu_{i+1}\mu_i=\mu_{i+1}\mu_i\mu_{i+1}$.





\subsection{Some needed results}

Let $\mathcal E =(E_1,\dots,E_n)$ be an
exceptional sequence in $D^b(\S)$.  Let $\overline E_i$ be the shift of 
$E_i$ which lies in $\S$.  Note that $(\overline E_1,\dots,\overline E_n)$
is still an exceptional sequence.  
Define the Hom-Ext quiver of $\mathcal E$
to be the quiver on vertex set $1,\dots,n$, where there is an arrow from
$i$ to $j$ if $\Hom(\overline E_i,\overline E_j)\ne 0$ or $\Ext^1(\overline E_j,\overline E_i)\ne 0$.  (Note that
the orders of the terms in the $\Hom$ and the $\Ext^1$ are different!)

\begin{proposition}[{\cite[Theorem 1.4]{mmm}}]\label{acyclicity} The Hom-Ext quiver of an exceptional sequence is 
acyclic.  \end{proposition}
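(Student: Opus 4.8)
The plan is to work throughout in the hereditary abelian category $\S$ rather than in $D^b(\S)$. Replacing each $E_i$ by its shift $\overline E_i \in \S$, the sequence $(\overline E_1,\dots,\overline E_n)$ is a complete exceptional sequence in $\S$, so for $j>i$ we have $\Hom(\overline E_j,\overline E_i)=0=\Ext^1(\overline E_j,\overline E_i)$. Since $\S$ is hereditary, Lemma~\ref{onlyone} applied to the exceptional pair $(\overline E_i,\overline E_j)$ with $i<j$ shows that exactly one of the following holds: $\Hom(\overline E_i,\overline E_j)\neq 0$ (giving a single arrow $i\to j$), or $\Ext^1(\overline E_i,\overline E_j)\neq 0$ (giving a single arrow $j\to i$), or all the relevant groups vanish (no arrow). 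Thus the Hom-Ext quiver has at most one arrow between each pair of vertices, oriented ``forward'' for $\Hom$-type pairs and ``backward'' for $\Ext^1$-type pairs. I would also record the elementary but crucial closure property: any subsequence of an exceptional sequence is again exceptional, and the Hom-Ext quiver is computed from the same $\Hom$ and $\Ext^1$ groups, so the induced subquiver of the Hom-Ext quiver on any subset of vertices is itself the Hom-Ext quiver of a shorter exceptional sequence. This lets me reformulate the goal: it suffices to prove that every nonempty Hom-Ext quiver of an exceptional sequence has a \emph{source}, since deleting a source leaves the Hom-Ext quiver of the (exceptional) subsequence, and iterating a source-ordering witnesses acyclicity.

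The engine is induction on $n$ via perpendicular categories. I would peel off $\overline E_1$ and pass to the left-perpendicular category ${}^\perp \overline E_1=\{X:\Hom(X,\overline E_1)=0=\Ext^1(X,\overline E_1)\}$. Because $\overline E_1$ is exceptional and $\S$ is hereditary, it is standard (see \cite{rin2}) that ${}^\perp \overline E_1$ is a hereditary abelian category with $n-1$ simple objects, and that it is an extension-closed exact abelian subcategory of $\S$, so $\Hom$ and $\Ext^1$ computed in ${}^\perp \overline E_1$ agree with those computed in $\S$. The exceptional-sequence relations for $i=1$ say exactly that $\overline E_2,\dots,\overline E_n \in {}^\perp \overline E_1$, and they form a complete exceptional sequence there. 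By the inductive hypothesis, the Hom-Ext quiver of $(\overline E_2,\dots,\overline E_n)$ is acyclic; since the relevant groups are unchanged, this is precisely the induced subquiver of our Hom-Ext quiver on $\{2,\dots,n\}$. Hence that subquiver is acyclic.

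It remains to reinstate vertex $1$ and produce a source for the full quiver. The arrows at vertex $1$ are completely explicit: for $j\geq 2$ we have $\Hom(\overline E_j,\overline E_1)=0=\Ext^1(\overline E_j,\overline E_1)$, so $1\to j$ occurs iff $\Hom(\overline E_1,\overline E_j)\neq 0$, while $j\to 1$ occurs iff $\Ext^1(\overline E_1,\overline E_j)\neq 0$. In the easy case where $\Ext^1(\overline E_1,\overline E_j)=0$ for all $j$, vertex $1$ has no incoming arrows and is a source, finishing the induction. The main obstacle is the remaining case, where $\overline E_1$ has nonvanishing $\Ext^1$ into the sequence, so that $1$ is not a source and one must instead locate a source among $\{2,\dots,n\}$ that survives in the full quiver: a source $\overline E_s$ of the subquiver acquires a new incoming arrow only if $\Hom(\overline E_1,\overline E_s)\neq 0$. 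The heart of the argument is therefore to control the simultaneous behavior of the functors $\Hom(\overline E_1,-)$ and $\Ext^1(\overline E_1,-)$ on the perpendicular category, ruling out the configuration in which every candidate source receives a map from $\overline E_1$ while some object carries an extension by $\overline E_1$. I expect to handle this by the approximation/universal-extension triangles of Section~\ref{mutation} relating $\overline E_1$ to objects of ${}^\perp \overline E_1$, together with Euler-form bookkeeping (using Lemma~\ref{onlyone} to read off the sign of $(-1)^k\dim\Ext^k(\overline E_1,\overline E_j)$ as in the proof of Lemma~\ref{MutationReflection}); equivalently, one may seek a real-valued functional on the vertices, such as a phase coming from a stability condition for which all $\overline E_i$ are stable, that strictly increases along every arrow. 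Establishing such a functional, or directly the existence of a surviving source, is the one genuinely nontrivial step, and it is where the full force of the hereditary representation theory of \cite{rin2} enters.
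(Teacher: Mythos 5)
Your preliminary reductions are all correct and would be a reasonable way to begin: shifting into $\S$ is harmless (the paper itself notes $(\overline E_1,\dots,\overline E_n)$ is still exceptional); Lemma~\ref{onlyone} gives the trichotomy for each pair; subsequences of exceptional sequences are exceptional, so induced subquivers of the Hom-Ext quiver are again Hom-Ext quivers and acyclicity reduces to the existence of a source; and the passage to ${}^\perp \overline E_1$ (hereditary, $n-1$ simples, with $\Hom$ and $\Ext^1$ computed unchanged) correctly yields, by induction, that the subquiver on $\{2,\dots,n\}$ is acyclic. Your identification of the arrows at vertex $1$ is also right: $1\to j$ iff $\Hom(\overline E_1,\overline E_j)\neq 0$ and $j \to 1$ iff $\Ext^1(\overline E_1,\overline E_j)\neq 0$.

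However, the entire content of the proposition is concentrated in the step you explicitly leave open: when some $\Ext^1(\overline E_1,\overline E_j)\neq 0$, you must find a source $s$ of the subquiver with $\Hom(\overline E_1,\overline E_s)=0$, i.e.\ rule out every directed path from an object receiving a nonzero map from $\overline E_1$ to an object carrying a nonzero $\Ext^1$ from $\overline E_1$. This localized claim is essentially a restatement of the theorem (an acyclic full quiver has a source, which is either vertex $1$ or a surviving source of the subquiver), so the induction has not reduced the difficulty, only relocated it, and neither of your two proposed tools can close it. Euler-form bookkeeping sees only the signed dimensions $E(\beta_i,\beta_j)$, and a cycle in the Hom-Ext quiver imposes sign patterns that are perfectly realizable by actual roots; the obstruction is categorical, not numerical --- compositions of nonzero morphisms can vanish, so one needs genuine long-exact-sequence arguments, not pairing computations in $K_0(\S)$. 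As for a real-valued functional strictly increasing along arrows: for a finite quiver the existence of such a functional is \emph{equivalent} to acyclicity, so invoking one is circular unless you construct it independently, and the natural candidates (position in the AR quiver, a slope or phase for which all $\overline E_i$ are simultaneously stable) fail in general --- the paper itself flags exactly this with its scare quotes, noting that in affine type the AR quiver has cycles and in wild type many morphisms are not recorded in it. Note finally that the paper does not prove this proposition at all: it is imported as a black box from \cite{mmm} (Theorem~1.4 there), where the proof is a substantive homological argument. So your proposal must stand on its own, and without the missing step it does not constitute a proof.
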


In other words, the Hom-Ext quiver can be understood as defining a poset.  The 
intuition for this poset is that $i$ precedes $j$ iff $\overline E_i$ ``comes earlier in the 
AR quiver'' than $\overline E_j$.  This expression is in scare quotes because in affine type, the
AR quiver has cycles, and in wild type, there are many morphisms which 
are not recorded in the AR quiver.


The following results are standard:

\begin{lemma}\label{isom} If $(B,C)$ is an exceptional sequence, and 
$\mu_1(B,C)=(C,B')$, then 
$\End(B)\isom \End(B')$.  
\end{lemma}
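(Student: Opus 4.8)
The plan is to run three long exact sequences off the defining triangle of the mutation and glue the resulting isomorphisms into a ring isomorphism. If $\Ext^j(B,C)=0$ for all $j$, then $\mu_1$ merely transposes the pair, so $B'=B$ and there is nothing to prove. Assume then that $k$ is the unique index with $\Ext^k(B,C)\neq 0$ (unique by Lemma~\ref{onlyone}), set $H=\Hom(B,C[k])$, write $L=\End(C)$, abbreviate $M=C[k]\otimes_L H^\vee$, and record the defining triangle
$$B' \xrightarrow{\ \iota\ } B \xrightarrow{\ \epsilon\ } M \rightarrow B'[1],$$
where $\epsilon$ is the universal (left thick $C$-) approximation. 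Since $\S$ is hereditary and $(B,C)$ is exceptional, I may use throughout that $\End(B),\End(C)$ are fields, that $\Ext^\bullet(C,B)=0$, and that $M$ is a direct sum of copies of $C[k]$, so that $\Ext^r(M,C)$, $\Ext^r(B',M)$, $\Ext^r(M,B)$ are each built from shifts of $\Ext^\bullet(C,C)$, $\Ext^\bullet(B',C)$, $\Ext^\bullet(C,B)$ respectively.

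The first and key step is to show $B'\in {}^\perp C$, i.e.\ $\Ext^\bullet(B',C)=0$. Applying $\Hom(-,C)$ to the triangle gives a long exact sequence in which $\Ext^r(M,C)$ and $\Ext^r(B,C)$ are both concentrated in degree $r=k$, and there the connecting map is $\epsilon^\ast\colon \Ext^k(M,C)\to\Ext^k(B,C)=H$. Here I would invoke the universal property of $\epsilon$: every map $B\to C[k]$ factors through $\epsilon$, so $\epsilon^\ast$ is surjective, and a dimension count over the division ring $L$ (both sides have $L$-dimension $\dim_L H$) upgrades this to an isomorphism. Feeding the isomorphism $\epsilon^\ast$ back into the long exact sequence forces $\Ext^r(B',C)=0$ for all $r$; this is also exactly the exceptionality condition needed for $(C,B')$ to be an exceptional sequence, which is a reassuring consistency check. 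I expect this step, pinning down that $\epsilon^\ast$ is an isomorphism rather than merely a surjection, to be the main obstacle, since it is where the precise meaning of the thick approximation enters.

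With $\Ext^\bullet(B',C)=0$ in hand the rest is formal. Applying $\Hom(B',-)$ to the triangle and using $\Ext^\bullet(B',M)=0$ (as $M$ is a sum of shifts of $C$) shows that postcomposition with $\iota$ gives an isomorphism $\End(B')\xrightarrow{\ \iota\circ-\ }\Ext^\bullet(B',B)$, concentrated in degree $0$. Applying $\Hom(-,B)$ and using $\Ext^\bullet(M,B)=0$ (from $\Ext^\bullet(C,B)=0$) shows that precomposition with $\iota$ gives an isomorphism $\End(B)\xrightarrow{\ -\circ\iota\ }\Ext^\bullet(B',B)$. The composite assigns to $f\in\End(B)$ the unique $g\in\End(B')$ with $\iota g = f\iota$. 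Finally I would check this bijection is multiplicative: if $\iota g_1=f_1\iota$ and $\iota g_2=f_2\iota$ then $\iota(g_1g_2)=f_1f_2\iota$, and the identity maps to the identity, so the correspondence is a ring isomorphism $\End(B)\isom\End(B')$, completing the proof.
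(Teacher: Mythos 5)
Your proof is correct, but it takes a genuinely different route from the paper's. The paper disposes of Lemmas~\ref{isom} and~\ref{equiv} together with a one-line appeal to a standard structural fact: braiding an object to the front of an exceptional sequence induces an equivalence of triangulated categories between perpendicular subcategories --- here, from the triangulated, extension-closed subcategory generated by $B$ to the one generated by $B'$, under which $B$ corresponds to $B'$ --- so $\End(B)\isom\End(B')$ is immediate. What you have done is, in effect, prove the relevant special case of that equivalence by hand from the defining triangle $B'\xrightarrow{\iota}B\xrightarrow{\epsilon}M\to B'[1]$. Your steps all check out: the key point that $\epsilon^{\ast}\colon \Ext^k(M,C)\to\Ext^k(B,C)$ is an isomorphism follows, as you say, from the universal property of the thick approximation (surjectivity) together with the $L$-dimension count $\dim_L\Hom(M,C[k])=\dim_L\Hom_L(H^{\vee},L)=\dim_L H$, with $\epsilon^{\ast}$ being $L$-linear since pre- and postcomposition commute; this places $B'$ in the left perpendicular category of $C$ (and incidentally verifies exceptionality of $(C,B')$). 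The two further long exact sequences, using $\Ext^{\bullet}(B',M)=0$ and $\Ext^{\bullet}(M,B)=0$ (the latter from $\Ext^{\bullet}(C,B)=0$), correctly identify both $\End(B')$ and $\End(B)$ with $\Hom(B',B)$, and your multiplicativity check $\iota(g_1g_2)=f_1\iota g_2=f_1f_2\iota$ shows the correspondence is a genuine ring isomorphism, not an anti-isomorphism. Comparing the two: your argument is more elementary and self-contained, using nothing beyond Lemma~\ref{onlyone}, the hereditary hypothesis, and the triangle, and it makes explicit where the universal property of the approximation is actually used; the paper's argument is shorter and buys more, since the same equivalence of subcategories simultaneously yields Lemma~\ref{equiv} (that $\Ext^j(A',B')\isom\Ext^j(A,B)$ for all $j$), which your single-pair triangle chase would not produce without additional work.
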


\begin{lemma}\label{equiv} If $(A,B,C)$ is an exceptional sequence, and
$(C,A',B')=\mu_1\mu_2(A,B,C)$, then $\Ext^j(A',B')\isom\Ext^j(A,B)$ for
all $j$.
\end{lemma}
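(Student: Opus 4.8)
The plan is to unwind the composition $\mu_1\mu_2$ and observe that $A'$ and $B'$ are each obtained by braiding the \emph{same} object $C$ past $A$ and past $B$ respectively, so that the comparison of $\Ext$-groups is controlled entirely by the orthogonality of $C$ to the appropriate objects. Applying $\mu_2$ to $(A,B,C)$ braids $C$ in front of $B$, yielding $(A,C,B')$, and then $\mu_1$ braids $C$ in front of $A$, yielding $(C,A',B')$; so $A'$ and $B'$ are the cones in the defining triangles
$$A' \to A \to C[k]\otimes_{\End(C)} H^\vee \to, \qquad B' \to B \to C[\ell]\otimes_{\End(C)} (H')^\vee \to,$$
where $k$ and $\ell$ are the unique indices (Lemma~\ref{onlyone}) for which $\Ext^k(A,C)$ and $\Ext^\ell(B,C)$ are nonzero, with $H=\Hom(A,C[k])$ and $H'=\Hom(B,C[\ell])$. (If one of these groups vanishes the corresponding mutation is a plain swap, e.g. $A'=A$, and the matching step below is trivial.)

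The core of the argument is to pass from $\Ext^j(A,B)$ to $\Ext^j(A',B')$ through the intermediate group $\Ext^j(A',B)$, using one triangle in each step. First I would apply $\Hom(A',-)$ to the triangle defining $B'$. Its long exact sequence reads $\cdots \to \Ext^j(A',B') \to \Ext^j(A',B) \to \Ext^j(A', C[\ell]\otimes (H')^\vee) \to \cdots$, and the rightmost term vanishes for all $j$ because $\Ext^\bullet(A',C)=0$: this is precisely the exceptionality condition for the \emph{mutated} sequence $(C,A',B')$, in which $C$ precedes $A'$. Since $C[\ell]\otimes (H')^\vee$ is a direct sum of shifts of $C$, this vanishing passes through the tensor and the shift, giving $\Ext^j(A',B')\cong\Ext^j(A',B)$ for all $j$.

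Next I would apply $\Hom(-,B)$ to the triangle defining $A'$. Its long exact sequence relates $\Ext^j(A,B)$, $\Ext^j(A',B)$, and $\Ext^j(C[k]\otimes H^\vee, B)$, and this last group vanishes because $\Ext^\bullet(C,B)=0$ --- this time the exceptionality condition for the \emph{original} sequence $(A,B,C)$, in which $C$ follows $B$. Hence $\Ext^j(A,B)\cong\Ext^j(A',B)$, and composing the two isomorphisms yields $\Ext^j(A',B')\cong\Ext^j(A,B)$ for all $j$, as desired. The proof is mechanical once set up; the only real subtlety --- and the point I would be most careful about --- is the bookkeeping: applying $\Hom$ in the correct variable so that the two needed vanishings are drawn from the two \emph{different} exceptional sequences (one from $(C,A',B')$, one from $(A,B,C)$), and checking that orthogonality to $C$ survives tensoring with $H^\vee$, $(H')^\vee$ and the degree shifts by $k$ and $\ell$.
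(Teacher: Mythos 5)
Your proof is correct, but it takes a genuinely different route from the paper's. You unwind $\mu_1\mu_2$ correctly, and the two-step d\'evissage through the intermediate group $\Ext^j(A',B)$ works: applying $\Hom(A',-)$ to the triangle defining $B'$, the third term vanishes because $\Ext^{\bullet}(A',C)=0$, which is the exceptionality of the \emph{mutated} sequence $(C,A',B')$; applying $\Hom(-,B)$ to the triangle defining $A'$, the third term vanishes because $\Ext^{\bullet}(C,B)=0$, from the \emph{original} sequence $(A,B,C)$; and in both cases the vanishing passes through the approximation object since it is a finite direct sum of shifts of $C$ over the division ring $\End(C)$. Note that your argument leans on the standard fact that mutation of an exceptional sequence is again exceptional (you need it for $\Ext^{\bullet}(A',C)=0$, which does not follow formally from the defining triangle alone but from the universality of the thick approximation); this is part of the well-known mutation theory the paper also takes as given, so there is no circularity. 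The paper instead proves Lemmas~\ref{isom} and~\ref{equiv} in one stroke: braiding $C$ to the front defines an equivalence of triangulated, extension-closed subcategories from the one generated by $A$ and $B$ to the one generated by $A'$ and $B'$, and both lemmas are instances of this equivalence preserving Hom groups. Your route is more elementary and self-contained, using only two long exact sequences and Lemma~\ref{onlyone} (plus your correctly handled degenerate cases where a mutation is a plain swap); the paper's route is shorter, yields Lemma~\ref{isom} simultaneously, and produces isomorphisms that are canonical and compatible with composition, which would matter if naturality were ever needed --- though for the actual uses in the paper (vanishing and dimension counts in Lemma~\ref{CanMutate} and Theorem~\ref{GoodStuffRepThry}), your explicit isomorphisms are entirely sufficient.
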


\begin{lemma} \label{vanishok}
Let $(A,B,C,D)$ is an exceptional sequence, and let $(A, C', B, D) = \mu_2 (A,B,C,D)$. If $\Ext^r(A, B)=\Ext^r(A,C)=0$ then $\Ext^r(A, C')=0$. Similarly, if $\Ext^r(B,D)=\Ext^r(C,D)=0$ then $\Ext^r(C', D)=0$.
\end{lemma}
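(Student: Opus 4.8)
The plan is to apply a suitable $\Hom$-functor to the distinguished triangle that defines the mutation and to read off the resulting long exact sequence, the key leverage being the single-degree concentration guaranteed by Lemma~\ref{onlyone}. Write $k$ for the unique degree with $\Ext^k(B,C)\neq 0$ and set $H=\Ext^k(B,C)$; if $\Ext^\bullet(B,C)=0$ the mutation merely transposes $B$ and $C$, so $C'=B$ and both assertions are immediate. Otherwise the triangle defining $\mu_2$ is
$$C' \to B \to C[k]\otimes_{\End(C)} H^\vee \to C'[1].$$
For the first assertion I would apply $\Hom(A,-)$; for the second, $\Hom(-,D)$ to the same triangle. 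The two cases are formally dual, so I describe only the first.

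Applying $\Hom(A,-)$ and using $\Ext^s(A,C[k]\otimes H^\vee)\cong\Ext^{s+k}(A,C)\otimes H^\vee$ together with the shift identity recalled in the appendix, the long exact sequence takes the shape
$$\cdots\to\Ext^{r+k-1}(A,C)\otimes H^\vee\to\Ext^r(A,C')\to\Ext^r(A,B)\to\Ext^{r+k}(A,C)\otimes H^\vee\to\cdots.$$
The structural input is that every pair extracted from an exceptional sequence is itself an exceptional pair, so by Lemma~\ref{onlyone} each of $\Ext^\bullet(A,B)$, $\Ext^\bullet(A,C)$ and $\Ext^\bullet(A,C')$ is concentrated in a single cohomological degree, say $d_{AB}$, $d_{AC}$, $d_{AC'}$. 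Reading off the sequence, $\Ext^\bullet(A,C')$ can be nonzero only in the degrees $d_{AB}$ and $d_{AC}-k+1$. The hypotheses $\Ext^r(A,B)=\Ext^r(A,C)=0$ say precisely that $r\neq d_{AB}$ and $r\neq d_{AC}$. When $k=1$ the second candidate degree $d_{AC}-k+1$ equals $d_{AC}$ and the claim is immediate; likewise whenever $\Ext^\bullet(A,C)=0$ the only possible support is $d_{AB}$ and we are done.

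The one genuinely delicate configuration, which I expect to be the main obstacle, is $k\neq 1$ together with $d_{AC}=d_{AB}+k$. Here $R\Hom(A,B)$ and $R\Hom(A,C[k]\otimes H^\vee)$ sit in the same degree, and the triangle identifies $R\Hom(A,C')$ with the fibre of
$$\theta\colon \Ext^{d_{AB}}(A,B)\to\Ext^{d_{AC}}(A,C)\otimes H^\vee,$$
namely Yoneda multiplication by the universal approximation class. Since $\Ext^\bullet(A,C')$ is concentrated in a single degree, $\theta$ must be either injective or surjective; to force the support into degree $d_{AB}$ rather than $d_{AB}+1$ — and thereby kill the remaining candidate $\Ext^{d_{AB}+1}(A,C')=\operatorname{coker}\theta$ — I must show $\theta$ is in fact surjective. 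This is where I would invoke the defining property of the approximation: $\phi\colon B\to C[k]\otimes H^\vee$ is the left $\langle C[k]\rangle$-approximation of $B$, so composition with $\phi$ is surjective on the relevant $\Hom$-space, and transporting this surjectivity through the identification above — exactly as the approximation map was exploited in the proof of Lemma~\ref{MutationReflection} — yields surjectivity of $\theta$. Granting this, $\operatorname{coker}\theta=0$, and since the hypotheses already exclude $r=d_{AB}$ we obtain $\Ext^r(A,C')=0$ in every case. The second assertion follows verbatim upon applying $\Hom(-,D)$ to the same triangle, with the single-degree concentration of $\Ext^\bullet(B,D)$ and $\Ext^\bullet(C,D)$ playing the role of the exceptional pairs above and the dual (right) approximation property supplying the corresponding surjectivity.
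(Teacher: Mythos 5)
Your framework (apply $\Hom(A,-)$ or $\Hom(-,D)$ to the mutation triangle and read off the long exact sequence) is exactly the paper's proof, which consists of one line: there is a triangle relating $B$, $C'$ and $C$, and the $\Ext$ long exact sequence sandwiches $\Ext^r(A,C')$ between $\Ext^r(A,C)$ and $\Ext^r(A,B)$. That argument, like your own, is complete only when the unique nonzero $\Ext^k(B,C)$ has $k=1$, so that no degree shift appears; this is the only case the paper ever uses (in the proof of Lemma~\ref{CanMutate} the mutated objects range over $J_2$, defined by $\Ext^1(V_i,V_k)\neq 0$, so $k=1$ there). The problems are in your attempt to close the $k\neq 1$ case. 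First, the surjectivity of $\theta$ does not follow from the approximation property: the universal property of $\phi\colon B \to C[k]\otimes H^{\vee}$ says that every map $B \to C[k]$ factors through $\phi$, i.e.\ that $\Hom(\phi, C[k])$ is surjective; it says nothing about the map $\theta$ obtained by applying $\Hom(A[-d_{AB}],-)$ to $\phi$. Worse, the same long exact sequence identifies $\operatorname{coker}\theta$ with $\Ext^{d_{AB}+1}(A,C')$, so the assertion ``$\theta$ is surjective'' is verbatim the vanishing you are trying to prove --- the step is circular. Second, your case analysis is incomplete: if $k\neq 1$, $\Ext^{\bullet}(A,C)\neq 0$ and $d_{AC}\neq d_{AB}+k$, the degrees never line up, exactness gives $\Ext^{d_{AC}-k+1}(A,C')\cong \Ext^{d_{AC}}(A,C)\otimes H^{\vee}\neq 0$, and the hypotheses do not exclude $r=d_{AC}-k+1$. (A smaller point: since the output $(A,C',B,D)$ keeps $B$ and replaces $C$, the relevant triangle is the right-approximation one, $B[-k]\otimes_{\End(B)}H \to C \to C' \to B[1-k]\otimes H$ with $H=\Ext^k(B,C)$, i.e.\ the paper's $\mu_2^{-1}$, not the triangle you wrote; there the needed ingredient becomes injectivity of a Yoneda composition, but the degree-shift obstruction is the same.)

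No repair is possible, because for $k\neq 1$ the statement is simply false as a statement about arbitrary exceptional sequences in $D^b(\S)$. Take the linear quiver $1\to 2\to 3\to 4$ and the exceptional sequence $(A,B,C,D)=(S_1,\,S_2,\,S_3[-1],\,S_4)$. Here $H=\Ext^2(S_2,S_3[-1])=\Ext^1(S_2,S_3)\cong\kk$, so $k=2$, and the mutation producing $(A,C',B,D)$ gives $C'=A_{23}[-1]$, the shifted indecomposable with dimension vector $\alpha_2+\alpha_3$. With $r=2$ the hypotheses hold: $\Ext^2(S_1,S_2)=0$ since $\S$ is hereditary, and $\Ext^2(S_1,S_3[-1])=\Ext^1(S_1,S_3)=0$; yet $\Ext^2(S_1,C')=\Ext^1(S_1,A_{23})\cong\Ext^1(S_1,S_2)\neq 0$, by the sequence $0\to S_3\to A_{23}\to S_2\to 0$. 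So the lemma must be read, as the paper tacitly does, in the situation where the nonzero $\Ext$ between $B$ and $C$ sits in degree $1$ --- and in that situation the observation you make in your $k=1$ case already \emph{is} the entire proof, with everything after it unnecessary and unsalvageable.
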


\begin{proof}[of Lemmas~\ref{isom} and~\ref{equiv}] Starting with 
an exceptional sequence $(A,B,C)$ and braiding $C$ in front to $(C,A',B')$
defines an equivalence of categories
from the triangulated, extension-closed subcategory generated by $A$ and $B$ to
to that generated by $A'$ and $B'$. Both results are isomorphisms between a Hom group in one of these categories to a Hom group in the other.
\qed\end{proof}

\begin{proof}[of Lemma~\ref{vanishok}]
As there is a triangle $B \to C' \to C$, this follows from the long exact sequence of $\Ext$ groups.
\qed\end{proof}


\subsection{The cluster complex and $\murev$} \label{murev}

Define $$\murev=[1](\mu_{n-1})(\mu_{n-2}\mu_{n-1})\dots(\mu_{2}\dots \mu_{n-1})(\mu_1\dots\mu_{n-1})$$
(i.e., first apply the sequence of mutations to the exceptional sequence,
and then apply $[1]$ to all the terms in the sequence.)  Note that this
differs by $[1]$ from the definition in \cite{art}.

\begin{lemma}\label{comrev} $\murev\mu_i=\mu_{n-i}\murev$ \end{lemma}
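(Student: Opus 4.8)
The plan is to verify the identity $\murev\mu_i = \mu_{n-i}\murev$ by reducing it to the braid relations among the $\mu_j$ (together with the fact that $[1]$ commutes with every $\mu_j$), which are the only tools we need since $\murev$ is by definition a word in the $\mu_j$ prefixed by the shift $[1]$. Because $[1]$ commutes with all mutation operators, it suffices to prove the purely braid-theoretic statement
\[
\sigma\, \mu_i = \mu_{n-i}\, \sigma, \qquad \sigma := (\mu_{n-1})(\mu_{n-2}\mu_{n-1})\cdots(\mu_1\cdots\mu_{n-1}),
\]
inside the braid group on $n$ strands (the group generated by $\mu_1,\dots,\mu_{n-1}$ subject to $\mu_i\mu_j=\mu_j\mu_i$ for $|i-j|>1$ and $\mu_i\mu_{i+1}\mu_i=\mu_{i+1}\mu_i\mu_{i+1}$, as recalled just before this lemma). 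The element $\sigma$ is a standard ``half-twist''-type element, and the content of the lemma is that conjugation by it reverses the indexing of the generators.

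First I would recognize $\sigma$ as the positive lift of the longest element $w_0$ of the symmetric group $S_n$ under the standard reduced word
\[
w_0 = (s_{n-1})(s_{n-2}s_{n-1})\cdots(s_1\cdots s_{n-1}),
\]
whose action by conjugation on the simple transpositions is exactly $s_i \mapsto s_{n-i}$. Since the relevant relations $\mu_i\mapsto \mu_{n-i}$ hold at the level of the braid group precisely because $w_0 s_i w_0^{-1} = s_{n-i}$ lifts to the braid group, the claim should follow. The cleanest route is an induction on $n$: writing $\sigma = \sigma'\cdot(\mu_1\cdots\mu_{n-1})$, where $\sigma'$ is the analogous element built from $\mu_1,\dots,\mu_{n-2}$, one applies the inductive hypothesis to move $\mu_i$ past $\sigma'$ and then handles the passage across the final block $\mu_1\cdots\mu_{n-1}$ using repeated braid and commutation relations.

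Concretely, the key computational step is the single-block identity
\[
(\mu_1\cdots\mu_{n-1})\,\mu_i = \mu_{i+1}\,(\mu_1\cdots\mu_{n-1}) \quad\text{for } 1\le i\le n-2,
\]
which is the standard fact that the Coxeter-type element $\mu_1\cdots\mu_{n-1}$ shifts indices by one; it is proved by moving $\mu_i$ leftward through $\mu_{i-1}\cdots\mu_1$ (which all commute with $\mu_i$ except $\mu_{i-1}$, handled by a braid move) and rightward appropriately. Iterating this shift through each of the $n-1$ nested blocks of $\sigma$ accumulates the total index reversal $i\mapsto n-i$.

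The main obstacle I anticipate is bookkeeping: tracking how the index $i$ is transported, one step at a time, through each of the successively shorter blocks $(\mu_j\cdots\mu_{n-1})$, and confirming that the boundary cases (when the shifted index reaches the left or right end of a block, so that a braid relation rather than a bare commutation is required) are handled correctly. I would organize this as a clean induction so that each inductive step invokes only the single-block shift identity above plus far-commutativity, rather than attempting to manipulate the entire word $\sigma$ at once; this keeps the braid manipulations local and makes the endpoint cases transparent.
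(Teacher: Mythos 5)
Your proposal is correct and takes essentially the same route as the paper, which disposes of this lemma with the single line ``This is a standard calculation in the braid group'': since $[1]$ commutes with each $\mu_j$, the claim reduces to the classical Garside half-twist conjugation $\sigma\mu_i\sigma^{-1}=\mu_{n-i}$, and your induction via the single-block shift identity $(\mu_1\cdots\mu_{n-1})\mu_i=\mu_{i+1}(\mu_1\cdots\mu_{n-1})$ is precisely that standard calculation written out. Two small bookkeeping corrections for the write-up: in the decomposition $\sigma=\sigma'(\mu_1\cdots\mu_{n-1})$ the element $\sigma'$ is the half-twist on the generators $\mu_2,\dots,\mu_{n-1}$ (not $\mu_1,\dots,\mu_{n-2}$), and the endpoint case $i=n-1$, where the shift identity does not apply, must be checked separately as you anticipate.
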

\begin{proof} This is a standard calculation in the braid group.
\qed\end{proof}

We say that a complete 
exceptional sequence is a \emph{cluster exceptional sequence}
if 
its terms lie in $\mathcal S\cup \{\text{ the projective indecomposable
objects of }\S[1]\}$, and $\Ext^1(A,B)=0$ for any $A,B$ in the sequence.  
We write $P_i$ for the indecomposable projective generated at vertex $i$.

We now define the \emph{cluster complex}.
This is the simplicial complex whose vertices are isomorphism classes of exceptional indecomposable objects in $\S$, together with a vertex for each of the projective indecomposable objects of $\S[1]$. A collection of such objects forms a face of the cluster complex if they can appear together in a cluster exceptional sequence.

Hubery \cite{hub} studies the same complex under a slightly different 
definition.  Since we need some of his results, we now describe his approach and its relation to ours.
Define the {\it completed tilting complex} to be a simplicial complex on
the exceptional indecomposables of $\S$, together with the 
positive integers $1\dots n$.  In the completed tilting complex,
$T_1,\dots,T_{j},i_1,\dots,i_r$ forms a 
face if $\Ext^1(\bigoplus_k T_k,\bigoplus_k T_k)=0$ and, for all $k$, 
we have that $T_k$ is not 
supported over any of the vertices $i_1,\dots,i_r$.  

\begin{lemma} The cluster complex and the completed tilting complex are
isomorphic, under the map taking the indecomposable $E$ to itself, and
taking $P_i[1]$ to $i$.  \end{lemma}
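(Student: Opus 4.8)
The plan is to exhibit an explicit bijection on faces and check that it preserves the incidence relation in both directions. The proposed correspondence sends an exceptional indecomposable $E \in \S$ to itself, and sends the shifted projective $P_i[1]$ to the integer $i$. First I would verify that this is well defined on vertices: the vertices of the cluster complex are exactly the exceptional indecomposables of $\S$ together with the $P_i[1]$, while the vertices of the completed tilting complex are the exceptional indecomposables of $\S$ together with $1,\dots,n$, so the map is a bijection on vertices provided the $P_i[1]$ are distinct, which they are since the $P_i$ are pairwise non-isomorphic.

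The heart of the argument is to show that a collection of vertices is a face of the cluster complex if and only if its image is a face of the completed tilting complex. So suppose we have objects $T_1,\dots,T_j$ in $\S$ and shifted projectives $P_{i_1}[1],\dots,P_{i_r}[1]$. By the definition of a cluster exceptional sequence, these form a face of the cluster complex if and only if they can be ordered into a complete exceptional sequence with all $\Ext^1$ groups (between members of the sequence) vanishing. I would translate each of the two defining conditions of the completed tilting complex into this language. The condition $\Ext^1(\bigoplus_k T_k, \bigoplus_k T_k)=0$ says precisely that the $T_k$ have no self- or mutual extensions, i.e. they form a partial tilting object; this matches the $\Ext^1$-vanishing requirement among the non-shifted terms of a cluster exceptional sequence. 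The condition that each $T_k$ is not supported over any vertex $i_1,\dots,i_r$ is the key point: I would show that the $\Ext^1$-vanishing between a term $T_k \in \S$ and a shifted projective $P_{i}[1]$ is equivalent to $T_k$ not being supported at vertex $i$.

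The main obstacle, and the calculation I would carry out most carefully, is exactly this last equivalence relating extensions to supports. Using the derived-category Hom shift formula from Section~\ref{CatSummary}, we have $\Ext^1(P_i[1], T_k) \cong \Ext^0(P_i, T_k) = \Hom(P_i, T_k)$ and, dually, $\Ext^1(T_k, P_i[1]) \cong \Ext^2(T_k, P_i) = 0$ since $\S$ is hereditary. Because $P_i$ is the indecomposable projective at vertex $i$, the space $\Hom(P_i, T_k)$ computes (up to the division algebra $K_i$) the multiplicity of the simple $S_i$ as a composition factor of $T_k$, which is nonzero exactly when $T_k$ is supported at $i$. Thus the $\Ext^1$-vanishing condition between $T_k$ and $P_i[1]$ holds precisely when $T_k$ is not supported at $i$, matching the support condition in the completed tilting complex. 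I would also need to confirm that the shifted projectives $P_{i_1}[1],\dots,P_{i_r}[1]$ themselves carry no obstructions among one another: $\Ext^1(P_i[1],P_{i'}[1]) \cong \Ext^1(P_i, P_{i'}) = 0$ because projectives have no higher self-extensions in a hereditary category, so any subset of distinct shifted projectives is automatically compatible, matching the fact that any subset of $\{1,\dots,n\}$ is allowed in the completed tilting complex.

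Finally, to close the argument I would invoke the existence results for exceptional sequences: given that all the pairwise $\Ext^1$-compatibility conditions hold, the collection $T_1,\dots,T_j,P_{i_1}[1],\dots,P_{i_r}[1]$ can indeed be assembled into a complete exceptional sequence lying in $\S \cup \S[1]$ with vanishing $\Ext^1$, so it is genuinely a face of the cluster complex; this uses the standard fact that an $\Ext^1$-rigid partial tilting object extends to a tilting object and that its summands together with the appropriate shifted projectives form a cluster exceptional sequence. Conversely every face of the cluster complex manifestly satisfies both completed-tilting conditions by the same translation. Since the map and its inverse both preserve faces, it is a simplicial isomorphism.
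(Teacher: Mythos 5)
Your easy direction and your Ext computations agree exactly with the paper: the identification $\Ext^1(P_i[1],T_k)\cong\Hom(P_i,T_k)$, its vanishing being equivalent to $T_k$ not being supported at $i$, and $\Ext^1(T_k,P_i[1])\cong\Ext^2(T_k,P_i)=0$ by heredity are precisely the content of the paper's first paragraph. The problem is in the converse direction, where you wave at ``the standard fact that an $\Ext^1$-rigid partial tilting object extends to a tilting object and that its summands together with the appropriate shifted projectives form a cluster exceptional sequence.'' As stated this step fails, and note that a face of the cluster complex is by definition a set of objects that embeds in a \emph{complete} cluster exceptional sequence, so pairwise $\Ext^1$-vanishing among $T_1,\dots,T_s,P_{i_1}[1],\dots,P_{i_r}[1]$ is not itself enough; you must actually produce the completion. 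If you apply Bongartz completion to $\bigoplus_k T_k$ inside $\S$ itself, you get a tilting object with $n$ summands, and tilting modules over a hereditary algebra are sincere: some new summand $T$ is supported at each forbidden vertex $i_j$, so $\Ext^1(P_{i_j}[1],T)=\Hom(P_{i_j},T)\neq 0$, and in any case you would have $n+r>n$ objects, too many for a complete exceptional sequence. So the completion in the ambient category cannot coexist with the shifted projectives, and invoking the existence of a compatible completion wholesale is essentially assuming the statement being proved.

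The paper's fix, which is the one idea your proposal is missing, is to complete in the \emph{restricted} category: let $\widetilde{Q}$ be $Q$ with the vertices $i_1,\dots,i_r$ deleted and $\widetilde{\S}$ its representation category. The support condition says exactly that $\bigoplus_k T_k$ is a partial tilting object for $\widetilde{\S}$, so it extends to a tilting object $\overline{T}$ of $\widetilde{\S}$ with $n-r$ summands, all automatically supported away from $i_1,\dots,i_r$. Two further small points you also skip then need checking: the summands of $\overline{T}$ can be ordered into an exceptional sequence because the Gabriel quiver of a tilting object has no cycles, and appending $P_{i_1}[1],\dots,P_{i_r}[1]$ at the end preserves exceptionality (here $\Ext^m(P_{i_j}[1],T_k)\cong\Ext^{m-1}(P_{i_j},T_k)$ vanishes by the support condition and projectivity, and the shifted projectives among themselves can be ordered using acyclicity of $Q$). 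With those repairs your argument becomes the paper's proof; without the restriction to $\widetilde{\S}$ the key existence step is a genuine gap.
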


\begin{proof} Consider a face of the cluster complex, say 
$T_1,\dots,T_s,P_{i_1}[1],\dots,P_{i_r}[1]$, with $T_k\in\S$ for all $k$.  
By definition,
$\Ext^1(T_i,T_j)=0$.  
Then
$$0=\Ext^1(P_{i_j}[1],T_i)=\Hom(P_{i_j},T_i),$$
so $T_i$ is not supported over vertex $i_j$.  
This shows that there is an inclusion from the cluster complex to the
completed tilting complex.  

Conversely, consider a face $T_1,\dots,T_s,i_1,\dots,i_r$ of the 
completed tilting complex.  Define $\widetilde Q$ to be the quiver $Q$
with the vertices $i_1,\dots,i_r$ removed, and define $\widetilde {\S}$ 
similarly.  Then
$\bigoplus T_i$ is a partial tilting object for $\widetilde{\S}$, so it is
a direct summand of a tilting object $\overline T$ 
for $\widetilde{\S}$.  Since the
Gabriel quiver of a tilting object has no cycles, the direct summands of 
$\overline T$ can be ordered into an exceptional sequence.  Appending
$P_{i_1}[1],\dots,P_{i_r}[1]$ onto the end, we obtain a cluster exceptional
sequence.  
\qed\end{proof}

We now recall the main results of \cite{hub} and \cite{art}, appropriately specialized.

\begin{theorem}[{\cite[Theorem 19]{hub}}]\label{connected}
\begin{enumerate}\item 
Any $(n-1)$-dimensional face of the completed tilting complex is 
contained in exactly two $n$-dimensional faces. 
\item It is possible to pass from any $n$-dimensional face of the 
completed tilting complex to any other $n$-dimensional face by a 
sequence of steps moving from one $n$-dimensional face to an
$n$-dimensional face adjacent across an $(n-1)$-dimensional face.  
\end{enumerate}
\end{theorem}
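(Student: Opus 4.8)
The plan is to treat the two parts separately. Part (1) is a local (pseudomanifold) statement which I would deduce from the classical theory of complements to almost complete tilting modules, while part (2) is the genuinely global statement and will be the main obstacle. Throughout I identify a face $\{T_1,\dots,T_s,i_1,\dots,i_r\}$ with the pair consisting of the rigid module $\bar T=\bigoplus_k T_k$ and the removed vertices $i_1,\dots,i_r$; passing to the full subquiver $Q'=Q\setminus\{i_1,\dots,i_r\}$ makes $\bar T$ a rigid module over the hereditary category $\S'$ of representations of $Q'$, which has $m=n-r$ vertices.

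For part (1), fix an $(n-1)$-dimensional face, so $s+r=n-1$ and $\bar T$ is rigid with exactly $m-1$ indecomposable summands, i.e.\ an almost complete tilting module over $\S'$. A completion to an $n$-dimensional face arises in exactly one of two ways: by adjoining an exceptional indecomposable $X$ supported on $Q'$ with $\bar T\oplus X$ rigid (a complement to $\bar T$ in $\S'$), or by adjoining an integer $j\in Q'$, which is legal precisely when no summand of $\bar T$ is supported at $j$, that is, when $\mathrm{supp}(\bar T)=Q'\setminus\{j\}$. Since a rigid module over a hereditary algebra has at most as many summands as its support has vertices, $|\mathrm{supp}(\bar T)|\ge m-1$. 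If $\bar T$ is sincere over $Q'$ there is no legal integer, and the classical complement theorem (Happel--Unger, Riedtmann--Schofield) gives exactly two complements; if $\bar T$ is not sincere then its support omits exactly one vertex $j$ (so there is exactly one integer completion), and the same theorem gives exactly one complement. Either way there are exactly two completions, which proves (1).

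Part (2), connectivity of the dual graph, is the main obstacle, since it is global and cannot be reduced to a single local computation. My preferred route is to realize the $n$-dimensional faces as the functorially finite torsion classes of $\S$: a maximal face corresponds to the support tilting module $\bar T$ (a genuine tilting module over $\S'$, hence sincere there, so that $\mathrm{supp}(\bar T)$ together with $\{i_1,\dots,i_r\}$ exhausts the vertices), and support tilting modules of a hereditary algebra are in bijection with functorially finite torsion classes. Ordering these by inclusion gives a finite poset whose maximum is $\S$ itself (the face with no integers, corresponding to the projective tilting module $\bigoplus P_i$) and whose minimum is the zero torsion class (the face $\{1,\dots,n\}$). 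Granting that the mutation across an $(n-1)$-dimensional face from part (1) coincides with passage to a neighbour in the Hasse quiver of this poset, connectivity is immediate: a finite poset with a maximum element has connected Hasse diagram, since every element is joined to the maximum by a maximal chain.

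The hard part is therefore the identification of the two completions of part (1) with the cover and co-cover relations of the torsion-class poset, and this is where I expect to spend most of the effort, keeping careful track of which completion enlarges and which shrinks the associated torsion class. A more elementary alternative, avoiding torsion classes, is an induction on $n$ following Hubery \cite{hub}: the faces containing a fixed integer, say $n$, biject with the faces of the completed tilting complex of $Q\setminus\{n\}$ and are mutually connected by the inductive hypothesis, so one is reduced to showing that every face with vertex $n$ in the support of $\bar T$ can be mutated to a face containing the integer $n$. I expect the torsion-class formulation to make precisely this last reduction cleanest, which is why I would organize the global argument around the poset.
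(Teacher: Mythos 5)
You should know at the outset that the paper does not prove this statement at all: it is quoted, with attribution, as Theorem~19 of Hubery's paper \cite{hub}, so the comparison here is really with Hubery's argument rather than with anything in the text above. Your part~(1) is essentially the standard proof, and it is correct as far as it goes: identifying a face with a rigid module over the restricted quiver, counting integer completions by the support defect, and counting module completions by the classical complement theorems. Two small points deserve care. First, the theorem you invoke is usually stated as ``exactly two complements iff \emph{faithful}'', so you need the (true, but not free) fact that over a hereditary algebra a sincere rigid module is faithful. Second, the setting of this paper is valued quivers, i.e.\ hereditary Artin algebras; part of Hubery's purpose in \cite{hub} is precisely to establish the Riedtmann--Schofield/Happel--Unger complement results in that generality, so citing the classical sources without comment quietly assumes what the cited theorem was written to supply.

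The genuine gap is in part~(2), and you have flagged it yourself: the entire connectivity argument rests on the sentence beginning ``Granting that the mutation across an $(n-1)$-dimensional face from part~(1) coincides with passage to a neighbour in the Hasse quiver of this poset.'' That identification --- that the two completions of an almost complete support tilting module yield torsion classes $\mathrm{Fac}\,\overline{T}$ related by a \emph{cover} in the poset of functorially finite torsion classes, and conversely that every Hasse arrow arises this way --- is a substantive theorem (it is Happel--Unger's comparison of the tilting quiver with the tilting poset in the sincere case, and in the support/torsion-class generality it is the Ingalls--Thomas picture, later systematized by Adachi--Iyama--Reiten). Once granted, your observation that a finite poset with a maximum has connected Hasse diagram does finish the job, and this is a clean and arguably more conceptual route than an induction on the quiver. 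But as written, part~(2) is a program with its crux deferred, not a proof; and your fallback induction has the same character, since the reduction ``every maximal face whose module is sincere at vertex $n$ can be mutated to a face containing the integer $n$'' is exactly the nontrivial step (it amounts to being able to mutate any tilting module strictly downward in the torsion order until a vertex leaves the support), which again requires the order-theoretic machinery you were hoping to avoid. So: part~(1) complete modulo correctly-scoped citations; part~(2) a correct strategy with the decisive lemma unproven.
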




\begin{theorem}[{\cite[Theorem 6.9]{art}}]\label{bijection} 
The map $\murev$ is a bijection from noncrossing exceptional sequences to
cluster exceptional sequences.   
\end{theorem}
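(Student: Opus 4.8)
The plan is to exploit the fact that the braid word defining $\murev$ reduces, in the symmetric group, to the longest element: its underlying permutation reverses positions, so $\murev$ should be read as ``reverse the exceptional sequence and then shift by $[1]$.'' Since each $\mu_i$ is an invertible operation on the set of \emph{all} complete exceptional sequences and $[1]$ is a bijection, $\murev$ is automatically a bijection from complete exceptional sequences to themselves. The theorem therefore reduces to two containments: that $\murev$ carries noncrossing sequences into cluster exceptional sequences, and that $\murev^{-1}$ carries cluster exceptional sequences into noncrossing ones. Combined with the global injectivity of $\murev$, these give the claimed bijection.

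For the first containment I would argue by induction on $n$ using a recursive decomposition of the braid word. The innermost block $\mu_1\cdots\mu_{n-1}$ braids $X_n$ all the way to the front while leaving $X_n$ itself unchanged, producing $(X_n, Y_1,\dots,Y_{n-1})$; the remaining blocks act only on positions $2,\dots,n$ and, after reindexing, constitute the unshifted $(n-1)$-term version of $\murev$ applied to $(Y_1,\dots,Y_{n-1})$. Applying the global $[1]$ then yields $\murev(X_1,\dots,X_n) = (X_n[1],\ \murev(Y_1,\dots,Y_{n-1}))$. The base case $n=1$ is trivial, so the induction step needs two things. First, the leading term $X_n[1]$ must be a legal cluster term: if $X_n\in\S[-1]$ then $X_n[1]\in\S$ and nothing is required, while if $X_n\in\S$ I would show $X_n$ is projective. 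This follows because the exceptional condition forces $\Ext^1(X_n,\overline X_j)=0$ for every earlier term and $\Ext^1(X_n,X_n)=0$; as these objects generate $D^b(\S)$ and $\S$ is hereditary, $\Ext^1(X_n,-)$ vanishes identically, so $X_n=P_i$ and $X_n[1]=P_i[1]$. Second, I must check that $(Y_1,\dots,Y_{n-1})$ is again noncrossing inside the perpendicular category of $X_n$ (a hereditary category with $n-1$ simples) so that the inductive hypothesis applies, and that the required $\Ext^1$ groups between $X_n[1]$ and the terms of $\murev(Y_1,\dots,Y_{n-1})$ vanish. Lemmas~\ref{equiv} and~\ref{vanishok}, together with Lemma~\ref{MutationReflection}, are exactly the tools for propagating $\Hom$- and $\Ext$-vanishing through each $\mu_i$.

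For the reverse containment I would run the symmetric argument for $\murev^{-1}$: after applying $[-1]$, a cluster sequence has terms in $\S[-1]\cup\{P_i\}$, and the inverse braid peels terms off one at a time in the analogous recursive fashion, with the dual lemma showing each peeled term is a legal noncrossing term. Alternatively, and perhaps more cleanly, surjectivity can be deduced from the connectivity already recalled: by Theorem~\ref{connected} any two cluster exceptional sequences are joined by a chain of single-object exchanges, while the intertwining relation $\murev\mu_i=\mu_{n-i}\murev$ of Lemma~\ref{comrev} shows that the image $\murev(\mathrm{NC})$ is stable under the corresponding moves. Once a single base point is known to lie in $\mathrm{CL}$ --- one computes directly that $\murev(S_1,\dots,S_n)=(P_n[1],\dots,P_1[1])$, using that each $S_i$ satisfies $\Ext^1(S_i,S_j)=0$ for $i\geq j$ and is hence projective --- the entire connected complex is covered.

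The main obstacle is the induction step of the first containment: confirming that braiding $X_n$ to the front preserves the noncrossing property in the perpendicular category and that all cluster $\Ext^1$-vanishing conditions survive the passage. The noncrossing conditions $\Hom(X_i,X_j)=0=\Ext^{-1}(X_i,X_j)$ are two-directional, whereas the exceptional and cluster conditions are one-directional, so the bookkeeping of which $\Ext^r$ groups are forced to vanish after each braid move --- and the verification that they reassemble into precisely $\Ext^1=0$ in both directions after the shift --- is the delicate part, and is where Lemmas~\ref{isom}, \ref{equiv} and~\ref{vanishok} do the real work.
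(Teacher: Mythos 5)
You should know at the outset that the paper does not prove Theorem~\ref{bijection} at all: it is imported verbatim from \cite[Theorem 6.9]{art}, so the only fair comparison is with the proof there, which does substantially more careful work than your sketch. Your global skeleton (reduce to two containments using invertibility of the braid action, peel off $X_n$ via the decomposition $\murev(X_1,\dots,X_n)=(X_n[1],\murev(Y_1,\dots,Y_{n-1}))$, base computation $\murev(S_1,\dots,S_n)=(P_n[1],\dots,P_1[1])$) is sound, but the key step is wrong as argued. To show a last term $X_n\in\S$ is projective you use only that the sequence is exceptional, that $\Ext^1(X_n,\overline{X}_j)=0$ for earlier terms, and that the terms generate $D^b(\S)$. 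This cannot suffice: generation involves negative shifts of $X_n$ itself, and $\Ext^1(X_n,X_n[-1])=\Hom(X_n,X_n)\neq 0$, so $\Ext^1(X_n,-)$ does not propagate through the relevant triangles. Concretely, in Example~\ref{A2Example} the complete exceptional sequence $(A_{12},S_1)$ satisfies every hypothesis you invoke ($\Ext^{\bullet}(S_1,A_{12})=0$ and $\Ext^1(S_1,S_1)=0$), yet $S_1$ is not projective. The conclusion is true for \emph{noncrossing} sequences, but only because of the forward vanishing $\Hom(X_j,X_n)=0$, which your argument at this step never uses; supplying that argument is genuine work, not a citation of hereditariness.

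There is a second, structural gap: your induction does not close. The inductive hypothesis applies inside the perpendicular category $\mathcal{C}={}^{\perp}X_n$ containing $(Y_1,\dots,Y_{n-1})$, and a cluster exceptional sequence \emph{relative to} $\mathcal{C}$ may contain shifts of $\mathcal{C}$-projectives, which need not be projective in $\S$. For instance, in the $A_2$ example ${}^{\perp}A_{12}$ is generated by $S_1$, whose projective is $S_1$ itself, not any $P_i$; so if the sub-recursion ever output $S_1[1]$, the assembled sequence would fail to be a cluster exceptional sequence in $\S$. Ruling out exactly these configurations, using the two-directional noncrossing constraints, is the substance of the proof in \cite{art}, and your sketch defers it to unspecified ``bookkeeping'' with Lemmas~\ref{isom}, \ref{equiv} and~\ref{vanishok}, which do not obviously accomplish it. Finally, your fallback surjectivity argument via connectivity is circular in the context of this paper: the claim that $\murev$ of the noncrossing sequences is stable under the flips of Theorem~\ref{connected} is essentially the existence of noncrossing mutations, i.e.\ Lemma~\ref{CanMutate}, and the paper proves Proposition~\ref{OneMutate} (and hence that lemma) \emph{using} Theorem~\ref{bijection}, not the other way around; note also that a single $\mu_i$ applied to a noncrossing sequence need not be noncrossing, so the intertwining relation of Lemma~\ref{comrev} alone gives no stability of the image.
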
 

If $X_i$ and $X_{i+1}$ are two consecutive 
terms of an exceptional sequence such that
$\Ext_\bullet(X_i,X_{i+1})=0$, then interchanging $X_{i+1}$ and $X_i$ clearly gives another exceptional sequence. 
We will call such a trivial reordering a \emph{commutation move} and say that  two exceptional sequences are \emph{commutation equivalent} if they can be obtained from each other by a sequence of commutation moves.
Observe that, if a set of exceptional objects in $\S$ has two exceptional orderings, then the two orderings must be commutation equivalent.

The following lemma follows from the proof of  \cite[Theorem 5.2]{art}:
\begin{lemma}[\cite{art}]  \label{RevEquiv}
The maps $\murev$ and $\murev^{-1}$ take commutation equivalent sequences to commutation equivalent sequences
\end{lemma}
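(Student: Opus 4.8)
The plan is to prove Lemma~\ref{RevEquiv} by reducing to the generators of commutation equivalence, namely single commutation moves, and then tracking how each such move interacts with the braid word defining $\murev$. Since $\murev$ is a composition of the basic mutation operators $\mu_i$, and since applying $\murev$ to a sequence is a fixed sequence of braiding operations (followed by $[1]$), the essential point is to understand what $\murev$ does to a sequence $\mathcal{E}$ versus what it does to $\mathcal{E}'$, where $\mathcal{E}'$ is obtained from $\mathcal{E}$ by swapping two adjacent terms $X_i, X_{i+1}$ with $\Ext^\bullet(X_i, X_{i+1}) = 0$.

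First I would reduce to the case where $\mathcal{E}$ and $\mathcal{E}'$ differ by a single commutation move in positions $i, i+1$. Because $\Ext^\bullet(X_i, X_{i+1}) = 0$, this swap is itself realized by the mutation operator $\mu_i$ (which, in the $\Ext$-vanishing case, simply switches the two terms, as recalled in Section~\ref{mutation}). So $\mathcal{E}' = \mu_i(\mathcal{E})$. The task then becomes: understand $\murev \mu_i(\mathcal{E})$ and show it is commutation equivalent to $\murev(\mathcal{E})$. Here I would invoke Lemma~\ref{comrev}, which gives the braid-group identity $\murev \mu_i = \mu_{n-i} \murev$. Thus $\murev(\mathcal{E}') = \murev \mu_i(\mathcal{E}) = \mu_{n-i} \murev(\mathcal{E})$, so the two images differ by a single application of $\mu_{n-i}$.

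The crux, then, is to show that applying $\mu_{n-i}$ to $\murev(\mathcal{E})$ is again a \emph{commutation} move rather than a genuine mutation — that is, to show the relevant $\Ext$ groups between the terms in positions $n-i, n-i+1$ of $\murev(\mathcal{E})$ vanish. This is where I expect the main obstacle to lie. The commutation-equivalence hypothesis tells us that $\Ext^\bullet(X_i, X_{i+1}) = 0$ in $\mathcal{E}$, and I would need to transport this vanishing through the whole braid word defining $\murev$ down to the two terms it acts on in the image. The natural tools are Lemmas~\ref{isom},~\ref{equiv}, and especially~\ref{vanishok}, which are precisely designed to track $\Ext$-vanishing under mutation operators; the equivalence-of-categories viewpoint in the proof of Lemmas~\ref{isom} and~\ref{equiv} says that braiding preserves $\Ext$ groups between the images of the braided objects, and this should let me conclude that the pair in positions $n-i, n-i+1$ of $\murev(\mathcal{E})$ inherits the vanishing from the original pair.

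Finally, since a single commutation move in $\mathcal{E}$ produces a single commutation move in $\murev(\mathcal{E})$, the statement for general commutation-equivalent sequences follows by composing these moves one at a time; and the claim for $\murev^{-1}$ follows by the symmetric argument, using the inverse braid relation $\murev^{-1}\mu_{n-i} = \mu_i \murev^{-1}$ obtained from Lemma~\ref{comrev}. I would note that the citation to \cite[Theorem 5.2]{art} indicates the bookkeeping is carried out there; my proof would be to point out that the preservation of $\Ext$-vanishing under braiding, which is exactly the content extracted in the proofs of Lemmas~\ref{isom}--\ref{vanishok}, shows that commutation moves are carried to commutation moves, and hence commutation equivalence is preserved.
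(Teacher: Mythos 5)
Your skeleton is the right one, and it is in fact the strategy of the proof this paper points to: the paper itself gives no argument for Lemma~\ref{RevEquiv}, deferring to the proof of Theorem~5.2 of \cite{art}, so your write-up has to stand on its own. The reduction to a single commutation move, the observation that such a move is the degenerate case of $\mu_i$, and the use of Lemma~\ref{comrev} to get $\murev(\mathcal{E}')=\mu_{n-i}\murev(\mathcal{E})$ are all correct. The gap is exactly where you locate it, and the tools you name do not close it as stated. In the braid word defining $\murev$, the two strands carrying the images of $X_i$ and $X_{i+1}$ are each crossed by every other strand, and at a crossing that modifies only \emph{one} member of the pair --- say the current image $A$ of the pair is replaced by $A'$ sitting in a triangle built from $A$ and a third object $C$ --- the long-exact-sequence argument of Lemma~\ref{vanishok} yields $\Ext^r(A',B)=0$ only if you also know the relevant $\Ext$ groups between $C$ and $B$ vanish, which is not among your hypotheses. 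So ``transporting the vanishing through the whole braid word'' needs an extra organizational idea: since $X_i$ and $X_{i+1}$ are adjacent with no $\Ext$'s in either direction, use the braid relations (which the $\mu_i$ satisfy) to rewrite the half-twist so that, after the initial trivial swap, the pair travels as an adjacent ``cable,'' each other strand braiding past both members in two consecutive steps; then every step touching the pair is literally the pattern of Lemma~\ref{equiv}, $(A,B,C)\mapsto(C,A',B')$ with $\Ext^j(A',B')\cong\Ext^j(A,B)$, and the two-sided vanishing persists to positions $n-i$, $n-i+1$.

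Alternatively, you can bypass $\Ext$-tracking entirely with tools already in this paper. The trivial swap is simultaneously $\mu_i(\mathcal{E})$ and $\mu_i^{-1}(\mathcal{E})$, and conjugating Lemma~\ref{comrev} gives $\murev\mu_i^{-1}=\mu_{n-i}^{-1}\murev$; hence $\mu_{n-i}\murev(\mathcal{E})$ and $\mu_{n-i}^{-1}\murev(\mathcal{E})$ are termwise isomorphic, both being $\murev(\mathcal{E}')$. Writing $A,B$ for the terms of $\murev(\mathcal{E})$ in positions $n-i$ and $n-i+1$, the first sequence has $(B,Y)$ there and the second has $(Z,A)$, so $Y\cong A$; Lemma~\ref{MutationReflection} then gives $t_{[B]}[A]=[A]$, forcing $([A],[B])=0$ since $[B]\neq 0$. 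By Lemma~\ref{onlyone} and exceptionality of the pair, $([A],[B])=\pm\dim_{\kk}\Ext^k(A,B)$ for the unique candidate $k$, so all $\Ext$'s from $A$ to $B$ vanish and $\mu_{n-i}$ is a commutation move, as needed. With either repair, the rest of your proposal (composing single moves, and the symmetric argument for $\murev^{-1}$) goes through.
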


We now explain the effect of combining the results of~\cite{hub} and~\cite{art}.

Let $(X_1, \ldots, X_n)$ and $(Y_1, \ldots, Y_n)$ be two complete exceptional sequences.
We say that $Y_{\bullet}$ is obtained by \emph{noncrossing mutation} of $X_{\bullet}$ at $X_i$ if
\begin{enumerate}
\item[(1)] $X_{\bullet}$ and $Y_{\bullet}$ are not commutation equivalent.
\item[(2)] $X_{\bullet}$  and $Y_{\bullet}$ are noncrossing.

Either
\item[(3a)] $X_i \in \S$ and $Y_{\bullet}$ is obtained from $X_{\bullet}$ by possibly applying some commutation moves, braiding $X_i$ over $(X_{i+1}, \ldots, X_j)$ for some index $j$, replacing $X_i$ by $X_i[-1]$ and possibly applying some commutation moves again or
\item[(3b)] $X_i \in \S[-1]$ and $Y_{\bullet}$ is obtained from $X_{\bullet}$ by possibly applying some commutation moves, braiding $X_i$ over $(X_{j}, \ldots, X_{i-1})$ for some index $j$, replacing $X_i$ by $X_i[1]$ and possibly applying some commutation moves again.
\end{enumerate}

\begin{proposition} \label{OneMutate}
Given a  noncrossing exceptional sequence $X_{\bullet}$ and an element $X_i$ in it, there is at most one commutation class of exceptional sequences which can be obtained from $X_{\bullet}$ by noncrossing mutation at $X_i$.
\end{proposition}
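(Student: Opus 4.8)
The plan is to reduce the statement to the uniqueness built into the cluster complex, transporting everything through the bijection $\murev$. Since $\murev$ is a bijection from noncrossing to cluster exceptional sequences (Theorem~\ref{bijection}) and both $\murev$ and $\murev^{-1}$ preserve commutation equivalence (Lemma~\ref{RevEquiv}), it is equivalent to show that the images $\murev(Y_\bullet)$ of all sequences $Y_\bullet$ obtained by noncrossing mutation of $X_\bullet$ at $X_i$ lie in a single commutation class of cluster exceptional sequences. A cluster exceptional sequence is, up to commutation equivalence, the same data as its underlying set of objects: under the isomorphism between the cluster complex and the completed tilting complex, such a set is a tilting object (on a subquiver) together with some shifted projectives, and the summands of a tilting object admit an exceptional ordering that is unique up to commutation moves, since the Gabriel quiver is acyclic (this is the observation recorded after Theorem~\ref{bijection}, together with Proposition~\ref{acyclicity}). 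So it suffices to show that all of these images $\murev(Y_\bullet)$ determine the same face of the cluster complex.

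First I would translate the noncrossing mutation itself into the cluster picture. Assume $X_i\in\S$, so we are in case (3a); the case $X_i\in\S[-1]$ is symmetric after applying the shift $[1]$. The operation braids $X_i$ over $(X_{i+1},\dots,X_j)$ and then replaces $X_i$ by $X_i[-1]$, up to commutation moves on either side. Using $\murev\mu_i=\mu_{n-i}\murev$ (Lemma~\ref{comrev}) and its inverse form, each braiding step is carried to a braiding step in $\murev(X_\bullet)$, and commutation moves are carried to commutation moves by Lemma~\ref{RevEquiv}. The key claim to establish is that, after this translation, the resulting cluster exceptional sequence $\murev(Y_\bullet)$ agrees with $\murev(X_\bullet)$ in all but one object; equivalently, the noncrossing mutation corresponds to removing a single, well-defined object from the cluster $\murev(X_\bullet)$ and re-completing it. The objects $X_{i+1},\dots,X_j$ are genuinely changed by the braiding, so the content here is that, after passing through $\murev$, these changes are absorbed and only one cluster object is exchanged; the technical Lemmas~\ref{equiv} and~\ref{vanishok}, which control how the $\Ext$ groups transform under these moves, are the tools I would use to verify this, with Proposition~\ref{acyclicity} pinning down the relevant ordering.

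Granting the claim, the conclusion is immediate. The $n-1$ unchanged objects form an $(n-1)$-dimensional face of the cluster complex, and by Theorem~\ref{connected}(1) this face lies in exactly two $n$-dimensional faces, one of which is $\murev(X_\bullet)$ itself. Condition~(1) in the definition of noncrossing mutation forces $Y_\bullet$, and hence $\murev(Y_\bullet)$, to be inequivalent to $X_\bullet$, so $\murev(Y_\bullet)$ cannot be this first face; it must therefore be the unique other one. Since this shared $(n-1)$-face is determined by $(X_\bullet,X_i)$ independently of the auxiliary choices (the index $j$ and the intervening commutation moves), the face $\murev(Y_\bullet)$ is determined, hence so is its commutation class, and pulling back through $\murev^{-1}$ with Lemma~\ref{RevEquiv} shows that $Y_\bullet$ is unique up to commutation equivalence. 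The main obstacle is the middle step: precisely identifying the single exchanged object and proving that the shared $(n-1)$-face does not depend on $j$ or on the intervening commutation moves. Everything else is bookkeeping with the cited results.
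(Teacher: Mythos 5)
Your proposal reproduces the paper's proof in all essentials: transport through $\murev$ via Theorem~\ref{bijection} and Lemma~\ref{RevEquiv}, observe via Lemma~\ref{comrev} that a noncrossing mutation at $X_i$ becomes braiding the corresponding object $X_i'$ behind a subset of $\murev(X_\bullet)$, so that all objects other than $X_i'$ are retained, and then invoke Theorem~\ref{connected}(1) (with condition~(1) of the definition ruling out $\murev(X_\bullet)$ itself) to pin down the resulting cluster, hence its commutation class. The one ``main obstacle'' you flag is actually immediate from the definitions: in each elementary braid move the object braided in front is unchanged, so only $X_i'$ is modified, and Lemmas~\ref{equiv} and~\ref{vanishok} are not needed here --- the paper uses them only for the existence statement, Lemma~\ref{CanMutate}.
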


In Lemma~\ref{CanMutate}, we will show there is exactly one such sequence. 

\begin{proof}
Suppose that $Y_{\bullet}$ and $Z_{\bullet}$ could both be so obtained. Let $X'_{i}$ be the element of $\murev(X_{\bullet})$ corresponding to $X_i$.
Then $\murev(Y_{\bullet})$ and $\murev(Z_{\bullet})$ are both obtained from $\murev(X_{\bullet})$ by braiding $X'_i$ under some subset of $\murev(X_{\bullet})$ .
In particular, $\murev(Y_{\bullet})$ and $\murev(Z_{\bullet})$ both contain all the elements of $\murev(X_{\bullet})$ other than $X'_i$. 
By Theorem~\ref{bijection},  $\murev(X_{\bullet})$, $\murev(Y_{\bullet})$ and $\murev(Z_{\bullet})$ are all clusters so, by Theorem~\ref{connected}, the underlying sets of  $\murev(Y_{\bullet})$ and $\murev(Z_{\bullet})$ are the same. So  $\murev(Y_{\bullet})$ and $\murev(Z_{\bullet})$ are commutation equivalent and, by Lemma~\ref{RevEquiv}, so are $Y_{\bullet}$ and $Z_{\bullet}$.
\qed\end{proof}

\begin{remark}
We explain why it is not obvious from Theorem~\ref{connected} that such a sequence exists.
Let $n=3$, let $(X,Y,Z)$ be a noncrossing exceptional sequence and suppose that we want to perform a noncrossing mutation at $Y$. 
Let $(A,B,C) = \murev(X,Y,Z)$ and let $\{ A, C, D \}$ be the elements of the other cluster containing $A$ and $C$.
One would hope that this other cluster is obtained by braiding $B$ behind one of $A$ and $C$, in which case applying $\murev^{-1}$ would give a noncrossing exceptional sequence which differs from $(X,Y,Z)$ by braiding $Y$ over one of $X$ and $Z$. If so, then this is a noncrossing mutation at $Y$, as desired.

However, suppose now that there are no $\Ext$'s between $A$ and $C$. It is \emph{a priori} possible that the exceptional ordering of $\{ A, C, D \}$ is $(C,D,A)$, obtained by braiding $B$ behind $C$ to obtain $(A,C,E)$, commuting $C$ and $A$, and then braiding $E$ behind $A$.
In this case, the hope of the previous paragraph fails.
The essence of the proof of Lemma~\ref{CanMutate} is ruling this case out.
\end{remark}


\begin{example} \label{RunningExample2}
In Example \ref{RunningExample}, we gave an example of a noncrossing 
exceptional sequence, $(S_1, A_{23}, A_{45}, S_2[-1],S_4[-1])$ with corresponding roots $(\alpha_1, \alpha_2+\alpha_3, \alpha_4+\alpha_5, - \alpha_2, - \alpha_4)$.  

We will braid $A_{23}$ over $A_{45}$ and $S_2[-1]$ and replace $A_{23}$ by
$A_{23}[-1]$.  
This results in the new sequence $(S_1, A_{2345}, S_3, A_{23}[-1], S_4[-1])$ with corresponding sequence of roots $(\alpha_1, \alpha_2+\alpha_3 + \alpha_4 + \alpha_5, \alpha_3, - \alpha_2 - \alpha_3, - \alpha_4)$. 
The somewhat ambitious reader may verify that this new sequence again obeys the conditions of Theorem~\ref{characterize}; the more ambitious reader can check that the corresponding sequence of objects truly is again exceptional.  It 
follows that in going between these two sequences we have effected a 
noncrossing mutation.  

Note that the sequences $(S_1, A_{23}, A_{45}, S_2[-1], S_4[-1])$  and   $(S_1, A_{23}, S_2[-1], A_{45}, S_4[-1])$ differ by a commutation move, so $(S_1, A_{23}, S_2[-1], A_{45}, S_4[-1])$  can also be turned into the sequence $(S_1, A_{2345}, S_3, A_{23}[-1], S_4[-1])$ by noncrossing mutation at $A_{23}$.
\end{example} 

\begin{example} \label{RunningExample3}
We consider the two noncrossing exceptional sequences
from Example~\ref{RunningExample2},
$(S_1, A_{23}, A_{45}, S_2[-1],S_4[-1])$, and $(S_1, A_{2345}, S_3, A_{23}[-1], S_4[-1])$.  We now consider $\murev$ applied to these sequences, and show
that they are cluster exceptional sequences related by a cluster mutation.  

$$\murev(S_1, A_{23}, A_{45}, S_2[-1],S_4[-1])=
(S_4,S_2,S_5[1],A_{345}[1],A_{12345}[1])$$
Since $A_{i(i+1)\dots 5}=P_i$, the objects of this sequence do lie in
$\S\cup \{P_i[1]\}$; we leave it as an exercise for the reader to check
that this is a cluster exceptional sequence.  

$$\murev(S_1, A_{2345}, S_3, A_{23}[-1], S_4[-1])=
(S_4,A_{234},S_2,S_5[1],A_{12345}[1]).$$
Again, it is easy to check that the objects lie in the appropriate set,
and it is clear that the two sequences differ in one object.
\end{example}

\begin{example} \label{A2Example2}
Applying $\murev$ to the noncrossing exceptional sequences from Example~\ref{A2Example} gives the following cluster exceptional sequences:
$$(S_2[1], A_{12}[1]),\ (S_1, S_2[1]),\ (A_{12}, S_1),\ (S_2, A_{12}),\ (S_2, A_{12}[1]).$$
The cluster complex is a pentagon, whose edges are indexed by the above sequences.
\end{example}

We conclude the section by proving Lemma~\ref{CanMutate}:
\begin{lemma} \label{CanMutate}
Let $V_{\bullet}$ be a noncrossing sequence.
For any index $i$, it is possible to perform a noncrossing mutation at $V_i$.
\end{lemma}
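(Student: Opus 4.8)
The plan is to transport the statement to the cluster side via $\murev$ and use the flip property of the cluster complex; since Proposition~\ref{OneMutate} already gives uniqueness of the commutation class, I only need to produce one noncrossing mutation at $V_i$. Set $U := \murev(V_\bullet)$, which is a cluster exceptional sequence by Theorem~\ref{bijection}, and let $A$ be the term of $U$ corresponding to $V_i$, in the sense used in the proof of Proposition~\ref{OneMutate}. Deleting $A$ leaves an $(n-1)$-dimensional face of the completed tilting complex, which by Theorem~\ref{connected}(1) is contained in exactly one cluster other than $U$, say $W$; thus $W$ agrees with $U$ except that $A$ is replaced by a single new object $D$. I would then take $Y_\bullet := \murev^{-1}(W)$. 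By Theorem~\ref{bijection} this is noncrossing, which is condition~(2); and since $W$ and $U$ have different underlying sets they are not commutation equivalent, so by Theorem~\ref{bijection} together with Lemma~\ref{RevEquiv} neither are $Y_\bullet$ and $V_\bullet$, which is condition~(1).

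The real content is condition~(3a)/(3b): that $Y_\bullet$ is reached from $V_\bullet$ by braiding $V_i$ over a consecutive block of its neighbours and replacing it by $V_i[\mp 1]$. I would establish this by realizing the flip $U \rightsquigarrow W$ as an explicit one-sided braid word and conjugating it through $\murev$. The cluster mutation exchanging $A$ for $D$ is governed by an exchange triangle, either $A \to E \to D \to$ with $A \to E$ a minimal left approximation of $A$ by the remaining terms of $U$, or $D \to E' \to A \to$ with $E' \to A$ a minimal right approximation. Because $U$ is an exceptional sequence in which every $\Ext^1$ among its terms vanishes, the only obstructions to commuting its terms are $\Hom$'s, and these flow from earlier to later in the exceptional order; hence the terms that $A$ maps to lie strictly to its right and those mapping into $A$ strictly to its left. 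Invoking the acyclicity of the Hom--Ext quiver (Proposition~\ref{acyclicity}), I would commute these approximating terms into one consecutive block immediately adjacent to $A$, so that the flip is effected by braiding $A$ past that single block on one side — a word $\mu_p\mu_{p+1}\cdots\mu_q$ or its inverse. Conjugating by $\murev$ via the braid relation $\murev\mu_j=\mu_{n-j}\murev$ of Lemma~\ref{comrev}, and absorbing the global $[1]$ in the definition of $\murev$, turns this into the braid of $V_i$ over a consecutive block of $V_\bullet$ together with the shift $V_i\mapsto V_i[\mp 1]$; whether the shift is $[-1]$ or $[1]$, i.e.\ whether we land in case~(3a) or~(3b), is forced by whether $V_i$ lies in $\S$ or in $\S[-1]$.

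The main obstacle is exactly the step of guaranteeing that the flip is a one-sided, consecutive braid that does not disturb the relative order of the surviving terms. This is the bad case flagged in the Remark following Proposition~\ref{OneMutate}: a priori the canonical exceptional order of $W$ could interleave the new object $D$ between survivors whose relative order has been transposed, so that reaching $W$ would require braiding on both sides of $A$, and the conjugated operation would fail to be a clean braid of $V_i$ over a consecutive block. To rule this out I would lean on the two structural facts just used — that in a cluster exceptional sequence only the acyclic $\Hom$-order (Proposition~\ref{acyclicity}) constrains commutation, and that the exchange triangle assembles $D$ from approximating terms drawn entirely from one side of $A$. Together these should force the exchanged block to sit on a single side and, after commuting away the terms having no morphisms to or from $A$, to be consecutive, so that no transposition of survivors is needed. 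With the bad case excluded, $Y_\bullet$ is a genuine noncrossing mutation of $V_\bullet$ at $V_i$, and Proposition~\ref{OneMutate} certifies that the commutation class produced is the only one, completing the proof.
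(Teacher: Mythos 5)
Your route is genuinely different from the paper's---you transport existence to the cluster side and try to pull the flip of Theorem~\ref{connected} back through $\murev$---and the easy parts work: $Y_\bullet = \murev^{-1}(W)$ is noncrossing by Theorem~\ref{bijection}, and it is not commutation equivalent to $V_\bullet$ by Theorem~\ref{bijection} plus Lemma~\ref{RevEquiv}. But there is a genuine gap at exactly the step you call ``the main obstacle'': verifying condition~(3a)/(3b). This is not a technical residue. The remark following Proposition~\ref{OneMutate} says in so many words that ruling out the interleaving scenario \emph{is} the essence of the proof of Lemma~\ref{CanMutate}, and your treatment of it is the phrase ``should force the exchanged block to sit on a single side \dots{} and to be consecutive,'' which is an assertion, not an argument. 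Concretely, two things are missing. First, you invoke an exchange triangle $A \to E \to D$ (or $D \to E' \to A$) with $E$ a minimal approximation of $A$ by the remaining terms of $U$. Theorem~\ref{connected}, as cited from Hubery, supplies only the combinatorial flip---existence and uniqueness of the exchange partner $D$---and not this triangle; the triangle is a theorem of cluster-tilting theory that this paper deliberately does not import (see Section~\ref{linksec}), so within this framework it would itself require proof. Second, even granting the triangle, consecutiveness does not follow from what you cite: nothing excludes a term $X$ lying between $A$ and a summand $Y$ of $E$ in the exceptional order with $\Hom(A,X) \ne 0$ (so $A$ cannot commute past $X$) and $\Hom(X,Y) \ne 0$ (so $X$ cannot commute past $Y$), while $X$ is not a summand of the minimal approximation (so you must not braid $A$ over it, since braiding would twist $X$ and change the underlying set of the cluster). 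Proposition~\ref{acyclicity} is perfectly consistent with the chain $A \to X \to Y$; it forbids cycles, not interleaving. This is the bad case of the paper's remark reappearing on the cluster side, and it is precisely where your proof stops.

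For comparison, the paper never uses the cluster side for existence at all. It works directly with $V_\bullet$: it partitions the remaining terms into $J_1$, $J_2$, $J_3$ according to whether $\Ext^1(V_k,V_i)$, $\Ext^1(V_i,V_k)$ or $\Ext^2(V_i,V_k)$ is nonzero, and shows by commutation moves that the sequence can be put in the order $J_1, V_i, J_2, J_3$. The one nontrivial point---ordering $J_2 \cap \S[-1]$ before $J_3$---is handled by Proposition~\ref{acyclicity}, which does real work there because any chain of commutation obstructions from $J_2 \cap \S[-1]$ to $J_3$ would close up into a cycle through $V_i$ in the Hom-Ext quiver. The paper then braids $V_i$ over \emph{all} of $J_2$ at once, applies the shift, and verifies the noncrossing conditions term by term using the approximation triangle together with Lemmas~\ref{onlyone}, \ref{equiv} and~\ref{vanishok}. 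Note that the block braided over is $J_2$, the set of all terms with a nonzero forward $\Ext$ from $V_i$---not the summands of a minimal approximation---which is the structural reason the interleaving problem above does not arise in the paper's argument. If you want to salvage your approach, the natural fix is the same: braid $A$ over every term admitting a nonzero morphism from $A$, and then prove the result realizes the flip to $W$; but that verification is essentially the paper's direct argument, so the detour through $\murev$ buys nothing for existence (it is what the paper uses for uniqueness and connectivity, via Proposition~\ref{OneMutate}).
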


\begin{proof}
We describe the case where $V_i \in \S$; the case where $V_i \in \S[-1]$ is similar.
Define 
\begin{eqnarray*}
J_1&=&\{ V_k \mid \Ext^1(V_k,V_i) \ne 0 \}\\
J_2&=&\{ V_k \mid \Ext^1(V_i,V_k)\ne 0 \} \\
J_3&=& \{ V_k \mid \Ext^2(V_i,V_k)\ne 0\}
\end{eqnarray*}

We claim that we can apply commutation moves to $V_\bullet$ so that, afterwards,
the elements of $J_1$ precede $V_i$, which precedes the elements 
of $J_2$, which, in turn, precede the elements of $J_3$. 
 
By the definition of a noncrossing sequence and the fact that $\S$ is hereditary these are the three, mutually exclusive 
possibilities for $k\ne i$ such that there is a non-zero $\Ext$ group between $V_k$ and $V_i$. 
Using the hereditary nature of $\S$, the elements of $J_1$ lie in $\S$ and the elements of $J_3$ lie in $\S[-1]$.
Let $J_2^{+}$ be $J_2 \cap \S$ and let $J_2^{-} = J_2 \cap \S[-1]$.
There can be no $\Ext$'s from elements of $\S[-1]$ to elements of $\S$, so we may apply commutations to order $J_1 \cup V_i \cup J_2^{+}$ before $J_2^{-} \cup J_3$.
Also, by definition, there are nonzero $\Ext$'s from $J_1$ to $V_i$ to $J_2^{+}$, so the fact that these elements are in the desired order automatically follows
from the fact that we have an exceptional sequence.  

Finally, we must show that we can order $J_2^{-}$ before $J_3$.
Recall the $\Hom$-$\Ext$ quiver from Proposition~\ref{acyclicity}, for
the exceptional sequence consisting of $V_i$ followed by the elements of 
$V_\bullet$ which lie in $\S[-1]$.  
Tracing through the definitions, there are arrows $J_3 \to V_i \to J_2^{-}$.
So, by Proposition~\ref{acyclicity}, there cannot be a sequence $A_0[-1], \dots,
A_m[-1]$ of objects
from $V_\bullet \cap \S[-1]$ such that $\Ext^1(A_t,A_{t+1})\ne 0$ for all $0\leq t \leq m-1$,
and such that $A_0\in J_3$, $A_1\in J_2^-$. 
It follows that we can order $J_2^{-}$ before $J_3$.

Braid $V_i$ over $J_2$, and then replace $V_i$ by $V_i[-1]$. 
Call the resulting exceptional sequence $V'_\bullet$.
We claim $V'_\bullet$ is noncrossing.
First, we check that all the $V'_j$ are in $\S \cup \S[-1]$. 
For $V_j$ not in $J_2$, this is obvious; let $V_j\in J_2$.  The approximation sequence looks like

$$V^{\oplus p}_i[-1] \rightarrow V_j \rightarrow V_j' \rightarrow V^{\oplus p}_i$$
for some $p>0$.  We see that
$V_j'$ admits a morphism from $V_j$, and a morphism to $V_i$, 
so it still lies
in $\S \cup \S[-1]$.

We now must check that $\Ext^r(V'_j, V'_k)$ vanishes for $r =0$ and $-1$. 
When $V_j$ and $V_k$ are both in $J_1 \cup J_3$, this is obvious. When they are both in $J_2$, this is Lemma~\ref{equiv}.
When one is in $J_1 \cup J_3$ and the other is in $J_2$, this is Lemma~\ref{vanishok}.
If $j=i$ and $V_k$ is in $J_1 \cup J_3$, this is obvious, and similarly with
the roles of $j$ and $k$ interchanged; when $j=i$ and $V_k \in J_2$ this follows from the definition of an exceptional sequence.
Finally, we are left with the case $k=i$ and $V_j \in J_2$. In this case, the
approximation sequence above shows that 
$\Ext^1(V'_j,V'_i)$ is nonzero, so all other $\Ext$ groups must be zero 
by Lemma~\ref{onlyone}.
\qed\end{proof}

\begin{example}
In the situation of Example~\ref{RunningExample3}, with $V_i=A_{23}$, we have $J_1 = \{ S_1 \}$, $J_2 = \{ A_{45}, S_2[-1] \}$ and $J_3 = \{ S_4[-1] \}$.
\end{example}

\section{Introduction to Frameworks}\label{decategorify}

We now describe work of Nathan Reading and the first author, regarding when the structure of a cluster algebra can be described by some Coxeter theoretic data.
Our starting point is the skew-symmetrizable matrix $B_0$, and the vector $(d_j)$.
We index the rows and columns of $B_0$ by a finite set $I$.
We now introduce the standard Coxeter theoretic terminology.

Let $V$ be a real vector space with a basis $\alpha_i$, for $i \in I$.
Let $\alpha_i^{\vee}$ be $d_i^{-1} \alpha_i$.
Define an inner product $E$ on $V$ by 
$$E(\alpha_i^{\vee}, \alpha_j) = 
\begin{cases} 
1 & \mbox{if}\ i=j \\
0 & \mbox{if}\ b_{ij} >0 \\
b_{ij} & \mbox{if}\ b_{ij}<0
\end{cases}$$

Define a symmetric bilinear form by $(\beta, \beta') = E(\beta, \beta') + E(\beta', \beta)$. 
In the theory of Coxeter groups, the form $(\cdot,\cdot)$ is the prime actor, but $E$ will return eventually.
We also define the skew-symmetric form $\omega(\beta, \beta') = E(\beta, \beta' ) - E(\beta', \beta)$.

Write $s_i$ for the reflection $\beta \mapsto \beta - 2 \alpha_i(\alpha_i, \beta)/(\alpha_i, \alpha_i)$ in $GL(V)$.
The Coxeter group $W$ is the subgroup of $GL(V)$ generated by the $s_i$. 
An element of $W$ is called a reflection if it is conjugate to one (or more) of the $s_i$.
Note that, if $b_{ij}=0$, then $\alpha_i$ and $\alpha_j$ are orthogonal
with respect to the symmetric form, so $s_i$ and $s_j$ commute.

A vector in $V$ is called a real root if it is of the form $w \alpha_i$ for some $w \in W$ and $i \in I$. 
If $\beta$ is a real root, then so is $- \beta$. 
The set of real roots is denoted $\Phi$.
A real root is called positive if it is in the positive span of the $\alpha_i$, and is called negative if it is the negation of a positive root. It is a nontrivial theorem that every real root is either positive or negative.
We'll write $\sgn: \Phi  \to \{ 1, -1 \}$ for the map which takes a root to its sign.

There is a bijection between reflections in $W$ and pairs $\{ \beta, -\beta \}$ of real roots.
Namely, if $t$ is a reflection, then its $(-1)$-eigenspace is of the form $\RR \beta$ for some real root $\beta$ and, conversely, for any real root $\beta$, the map $\gamma \mapsto \gamma - 2 \beta(\gamma, \beta)/(\beta, \beta)$ is a reflection in $W$.
We will say that $t$ is the reflection in $\beta$, or in $- \beta$.



The following definitions are from~\cite{RS3}.
A complete reflection framework consists of (1) a connected $n$-regular graph $G$ and (2) a function $C$ which, to every pair $(v,e)$ where $v$ is a vertex of $G$ and $e$ is an edge of $G$, assigns a vector $C(v,e)$ in $V$. 
One of the consequences of the axioms of a reflection framework will be that $C(v,e)$ is always a real root.
We write $C(v)$ for the $n$-tuple $\{ C(v,e) \}_{e \ni v}$.

\noindent
\textbf{Base condition:}  
For some vertex $v_b$,  the set $C(v_b)$ is the simple roots, $\{ \alpha_i \}_{i \in I}$. 

\noindent
\textbf{Reflection condition:}
Suppose $v$ and $v'$ are distinct vertices incident to the same edge $e$.
Let $C(v,e) = \beta$. Then $C(v',e) = - \beta$. Furthermore, if $t$ is the reflection in $\beta$, and $\gamma \neq \beta$ is an element of $C(v)$, then
$C(v')$ contains the root 
\[\gamma'=\left\lbrace\begin{array}{ll}
t\gamma&\mbox{if }\omega(\beta,\gamma)\ge 0,\mbox{ or}\\
\gamma&\mbox{if }\omega(\beta,\gamma)<0.
\end{array}\right.\]

For a $v$ vertex of $G$,
define $C_+(v)$ to be the set of positive roots in $C(v)$ and define $C_-(v)$ to be the set of negative roots in $C(v)$.
Let $\Gamma(v)$ be the directed graph whose vertex set is $C(v)$, with an edge $\beta \to \beta'$ if $E(\beta, \beta') \neq 0$. 

\noindent
\textbf{Euler conditions:}
Suppose $v$ is a vertex of $G$ with $\beta$ and $\gamma$ in $C(v)$.
Then
\begin{enumerate}
\item[(E1) ] If $\beta\in C_+(v)$ and $\gamma\in C_-(v)$ then $E(\beta,\gamma)=0$. 
\item[(E2) ] If $\sgn(\beta)=\sgn(\gamma)$ then $E(\beta,\gamma)\le0$.  
\item[(E3) ] The graph $\Gamma(v)$ is acyclic.\\
\end{enumerate}

\begin{remark} In any reflection framework, let $t$, $\beta$ and $\gamma$ be as in the Reflection Condition, and suppose that $\omega(\beta, \gamma) = 0$. 
By condition~(E3), either $E(\beta, \gamma)$ or $E(\gamma, \beta)$ is $0$, and $\omega(\beta, \gamma) = E(\beta, \gamma) - E(\gamma, \beta)$, so we see that $E(\beta, \gamma) = E(\gamma, \beta) = 0$. 
But then $(\beta, \gamma)=0$, so $t \gamma = \gamma$. 
We thus see that it is unimportant which of the two cases in the reflection condition is assigned the strict inequality.
\end{remark}

Given a connected $n$-regular graph $G$, and a choice of which vertex to call $v_b$, there is at most one way to put a framework on $G$;
the Reflection Condition recursively determines what $C(v,e)$ must be for every $(v,e)$. 
One then must check whether the resulting recursion is consistent, and whether or not the Euler Conditions are obeyed.

In~\cite{RS3}, it is shown that, if there is a complete reflection framework for a given initial $B$-matrix, then one can recover all the $B$-matrices and $g$-vectors of the corresponding cluster algebra from simple combinatorial operations on the framework, and many standard conjectures about cluster algebras follow in that case.
Conversely, it is also shown that, assuming certain standard conjectures about cluster algebras, every acyclic cluster algebra does come from a framework.


\section{Dimension vectors of noncrossing sequences give a framework}

We now identify the vector space $V$ (above) with $K_0(\S) \otimes \mathbb{R}$, identifying $[S_i]$ with $\alpha_i$.
We see that 
$$([X], [Y]) = E([X], [Y]) + E([Y],[X]),$$
by checking this identity on the basis of simples.
The reader may be surprised to learn that
$$E([X], [Y]) = \sum (-1)^r \Ext^r(Y, X).$$
This reversing of the order of $X$ and $Y$ is required in order to match the various sign conventions of the authors' earlier work; see section~\ref{signsec}.

Let $G$ be the graph whose vertices are commutation equivalence classes of noncrossing exceptional sequences.
Let  there be an edge from $\vertex$ to $\vertex'$ if $\vertex$ and $\vertex'$ are linked by a noncrossing mutation.  
Let $\vertex$ and $\vertex'$, joined by an edge $e$, be linked by mutating at $M \in \vertex$. 
We set $C(\vertex, e)$ be the vector $[M]$ in $V$. 
By Proposition~\ref{OneMutate}, the $C(\vertex,e)$ are distinct.
By Lemma~\ref{CanMutate}, the graph $G$ is $n$-regular.

\begin{lemma} \label{EulerLem}
The pair $(G,C)$ obey the Euler conditions.
\end{lemma}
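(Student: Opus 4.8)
The plan is to fix any noncrossing exceptional sequence $(V_1,\dots,V_n)$ representing the vertex $v$ (the commutation class determines the underlying set of objects, hence the tuple $C(v)$), write $\overline V_k$ for the shift of $V_k$ lying in $\S$, and compute $E([V_i],[V_j])$ for $i\neq j$ directly from the identity $E([X],[Y])=\sum_r(-1)^r\dim_\kk\Ext^r(Y,X)$. Using $\Ext^r_{D^b(\S)}(M[a],N[b])\cong\Ext^{r-a+b}_{\S}(M,N)$ together with heredity, each such $E$ reduces to an alternating sum of a $\Hom$ and an $\Ext^1$ between the $\S$-objects $\overline V_i,\overline V_j$. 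The noncrossing conditions $\Hom(V_i,V_j)=0=\Ext^{-1}(V_i,V_j)$, which hold for all $i\ne j$ in both orders, translate after unwinding the shifts into the vanishing of precisely the $\Hom$-type terms that appear, so that the surviving contribution is controlled by a single $\Ext^1$ group.

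Carrying this out case by case yields (E1) and (E2) at once. If $V_i,V_j$ lie both in $\S$ or both in $\S[-1]$, then $\Hom(\overline V_j,\overline V_i)=0$ by noncrossing, and one gets
\[ E([V_i],[V_j])=-\dim_\kk \Ext^1(\overline V_j,\overline V_i)\le 0, \]
which is (E2). If $V_i\in\S$ (a positive root) while $V_j\in\S[-1]$ (a negative root), the shift identity places the relevant $\Ext$ groups in degrees where noncrossing forces \emph{both} $\Hom(\overline V_j,\overline V_i)$ (coming from $\Ext^{-1}(V_j,V_i)=0$) and $\Ext^1(\overline V_j,\overline V_i)$ (coming from $\Hom(V_j,V_i)=0$) to vanish, so $E([V_i],[V_j])=0$; this is (E1).

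Finally, (E3) is deduced from Proposition~\ref{acyclicity}. By (E1) there are no edges of $\Gamma(v)$ running from $C_+(v)$ to $C_-(v)$, so any directed cycle must lie entirely within $C_+(v)$ or entirely within $C_-(v)$. Within either of these classes the computation above shows that an edge $[V_i]\to[V_j]$, i.e.\ $E([V_i],[V_j])\ne 0$, forces $\Ext^1(\overline V_j,\overline V_i)\ne 0$, which is exactly one of the two conditions placing an arrow $i\to j$ in the Hom--Ext quiver of $(V_1,\dots,V_n)$. Hence a directed cycle in $\Gamma(v)$ would yield a directed cycle in the Hom--Ext quiver, contradicting its acyclicity. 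I expect this final step---confining any cycle to a single sign class via (E1) and matching the surviving $\Ext^1$ to a Hom--Ext arrow in the correct direction---to be the main point; the case computations themselves are routine bookkeeping with the shift formula and the noncrossing vanishing conditions.
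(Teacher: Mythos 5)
Your proof is correct. For conditions (E1) and (E2) it is essentially the paper's argument: the paper also reduces everything to the vanishing of $\Hom$ and the sign of a single surviving $\Ext^1$, the only cosmetic difference being that for (E1) the paper first applies commutation moves (justified by the vanishing of $\Ext^{-1}$) to place all objects of $\S$ before those of $\S[-1]$ and then quotes exceptionality to kill $\Ext^{\bullet}(N,M)$ wholesale, whereas you inline the same vanishing by unwinding the shift formula and using both noncrossing conditions $\Hom(V_j,V_i)=0=\Ext^{-1}(V_j,V_i)$ directly. Where you genuinely diverge is (E3): the paper disposes of it in one line ``from the definition of an exceptional sequence,'' the point being that $E([V_i],[V_j])=\sum_r(-1)^r\dim_\kk\Ext^r(V_j,V_i)$ vanishes whenever $i<j$ by exceptionality, so every edge of $\Gamma(v)$ points from a later term of the sequence to an earlier one and $\Gamma(v)$ is trivially acyclic --- no appeal to (E1), to noncrossing, or to any external result is needed. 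Your route instead confines a putative cycle to a single sign class via (E1) and transports it to the Hom--Ext quiver, invoking Proposition~\ref{acyclicity} (a nontrivial theorem from \cite{mmm}); your bookkeeping is right (an edge $[V_i]\to[V_j]$ within one class forces $\Ext^1(\overline V_j,\overline V_i)\neq 0$, which is precisely an arrow $i\to j$ in the Hom--Ext quiver, with the reversed-arguments convention handled correctly), but it is heavier than necessary, and your closing guess that this step is ``the main point'' is inverted relative to the paper, where (E3) is the most immediate of the three conditions.
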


\begin{proof}
Condition~(E3) follows from the definition of an exceptional sequence.

Note that, if $\beta = [M]$ is in $C_{+}(v)$ and $\gamma = [N]$  is in $C_{-}(v)$, then $M$ is an object of $\S$, and $N$ is an object of $\S[-1]$.
Because $\Ext^{-1}$ vanishes in a noncrossing sequence, we may apply commutation moves so that all the elements of $\S$ come before all the elements of $\S[-1]$. Then $\Ext^r(N,M)=0$ for all $r$, by the definition of an exceptional sequence, so Condition~(E1) follows.

Finally, suppose that $\beta$ and $\gamma$ are both in $C_+(v)$ (the case of $C_-(v)$ is similar). 
Then $\beta = [M]$ and $\gamma = [N]$, for two objects $M$ and $N$ in $\S$.
By the definition of a noncrossing sequence, $\Hom(N, M)=0$. Also, as $\S$ is hereditary, we have $\Ext^r(N,M)=0$ for $r \geq 2$.
So the only nonvanishing $\Ext$ group is $\Ext^1$, and we see that $E(\beta, \gamma) \leq 0$, as required by Condition~(E2).
\qed\end{proof}

\begin{corollary}
$G$ is connected.
\end{corollary}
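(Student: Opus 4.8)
The plan is to transport the connectivity established in Theorem~\ref{connected}(2) across the bijection $\murev$. By Theorem~\ref{bijection}, $\murev$ is a bijection from noncrossing exceptional sequences to cluster exceptional sequences, and by Lemma~\ref{RevEquiv} it descends to a bijection on commutation classes. Commutation-equivalent cluster exceptional sequences have the same underlying set, so these commutation classes are exactly the $n$-dimensional faces of the cluster complex (equivalently, of the completed tilting complex). Thus $\murev$ gives a bijection from the vertex set of $G$ to the set of $n$-dimensional faces of that complex. Let $D$ denote the dual graph of the cluster complex, whose vertices are the $n$-dimensional faces and whose edges join two such faces sharing an $(n-1)$-dimensional face; by Theorem~\ref{connected}(1) this is well defined, and by Theorem~\ref{connected}(2) it is connected. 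It therefore suffices to show that $\murev$ carries $G$ isomorphically onto $D$.

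First I would show that every edge of $G$ maps to an edge of $D$. If $Y_\bullet$ is obtained from a noncrossing sequence $X_\bullet$ by a noncrossing mutation at $X_i$, then, repeating the computation in the proof of Proposition~\ref{OneMutate}, $\murev(Y_\bullet)$ is obtained from $\murev(X_\bullet)$ by braiding the single element $X'_i$ corresponding to $X_i$ under some subsequence. Hence $\murev(Y_\bullet)$ contains every element of $\murev(X_\bullet)$ except $X'_i$. Since condition~(1) in the definition of noncrossing mutation forces $Y_\bullet$ not to be commutation equivalent to $X_\bullet$, the faces $\murev(X_\bullet)$ and $\murev(Y_\bullet)$ are distinct but share the $(n-1)$-dimensional face obtained by deleting $X'_i$, so they are joined by an edge of $D$.

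For the reverse inclusion, I would start from an $n$-dimensional face $F=\murev(X_\bullet)$ and an $(n-1)$-dimensional face obtained by deleting one element $e$. By Theorem~\ref{connected}(1) there is exactly one other $n$-dimensional face $F''$ containing $F\setminus\{e\}$. Setting $X_i=\murev^{-1}(e)$, Lemma~\ref{CanMutate} guarantees a noncrossing mutation at $X_i$, producing some $Y_\bullet$; by the previous paragraph $\murev(Y_\bullet)$ is an $n$-dimensional face distinct from $F$ and containing $F\setminus\{e\}$, so $\murev(Y_\bullet)=F''$. Thus the edge of $D$ across $F\setminus\{e\}$ is the image of the edge of $G$ joining $X_\bullet$ to $Y_\bullet$. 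Combining the two inclusions, $\murev$ is a graph isomorphism from $G$ to $D$, and since $D$ is connected by Theorem~\ref{connected}(2), so is $G$.

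I expect the main obstacle to be the middle step: verifying that a noncrossing mutation at $X_i$ changes $\murev(X_\bullet)$ in exactly one element rather than several. This is the subtlety highlighted in the remark following Proposition~\ref{OneMutate}, and controlling it relies on the precise form of conditions~(3a)/(3b) together with the braid identity $\murev\mu_i=\mu_{n-i}\murev$ of Lemma~\ref{comrev}, which is what lets a one-sided braiding on noncrossing sequences correspond to a single-element cluster mutation. Once that single-element behavior is in hand, Theorem~\ref{connected} supplies the connectivity with no further work.
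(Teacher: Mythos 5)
Your proposal is correct and follows essentially the same route as the paper: the paper's proof likewise observes (from the proof of Proposition~\ref{OneMutate}) that $\murev$ embeds $G$ into the dual graph of the cluster complex, uses $n$-regularity of $G$ (Lemma~\ref{CanMutate}) against Theorem~\ref{connected}(1) to conclude $G$ is the whole dual graph, and then invokes Theorem~\ref{connected}(2). You merely spell out the vertex and edge correspondences explicitly where the paper compresses them into a regularity count.
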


\begin{proof}
From the proof of Proposition~\ref{OneMutate}, $G$ is a subgraph of the dual graph to the cluster complex. Since we now know that $G$ is $n$-regular, we see that $G$ is the dual graph of the cluster complex and we are done by Theorem~\ref{connected}.
\qed\end{proof}

\begin{lemma} \label{RefLem}
$(G,C)$ obeys the reflection condition.
\end{lemma}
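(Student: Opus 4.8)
The plan is to translate the combinatorial Reflection Condition into a statement about the noncrossing mutation constructed in Lemma~\ref{CanMutate}, and then to read off the signs of $\omega$ from an $\Ext$-group computation. I treat the case $M = V_i \in \S$; the case $V_i \in \S[-1]$ is entirely symmetric, with the roles of $\Hom$ and $\Ext^2$, and of the shifts $[-1]$ and $[1]$, interchanged. Write $\beta = C(v,e) = [V_i]$, and recall that $C(v')$ is the set of classes of the mutated sequence $V'_\bullet$. The noncrossing mutation braids $V_i$ over the block $J_2$ and then replaces $V_i$ by $V_i[-1]$; since $[V_i[-1]] = -[V_i]$, this immediately yields the first assertion $C(v',e) = -\beta$.

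For the remaining roots, I would first observe that during the braiding the object $V_i$ itself is unchanged (it is the unbroken front string), so by Lemma~\ref{MutationReflection} each element $V_k \in J_2$ is carried to an object whose class is $t_{[V_i]}[V_k] = t_\beta \gamma$, while every element of $J_1$, of $J_3$, and every element having no $\Ext$ with $V_i$ is left untouched, contributing $\gamma' = \gamma$. (Note that the ordering produced in Lemma~\ref{CanMutate}, namely $J_1, V_i, J_2^+, J_2^-, J_3$, makes $J_2 = J_2^+ \cup J_2^-$ the contiguous block immediately to the right of $V_i$, so it is exactly $J_2$ that gets braided over.) Thus it remains only to check that $\omega(\beta, \gamma) > 0$ precisely when $\gamma = [N]$ with $N \in J_2$, and $\omega(\beta, \gamma) \le 0$ otherwise, so that the two branches of the Reflection Condition select exactly the elements that are, respectively, reflected and fixed.

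The key step is the computation of $\omega$. Using $E([X],[Y]) = \sum_r (-1)^r \dim_\kk \Ext^r(Y,X)$, for $\beta = [M]$ and $\gamma = [N]$ one obtains
\[
\omega(\beta,\gamma) = E(\beta,\gamma) - E(\gamma,\beta) = \sum_r (-1)^r\bigl(\dim_\kk \Ext^r(N,M) - \dim_\kk \Ext^r(M,N)\bigr).
\]
Here the hereditary property together with the noncrossing vanishing $\Hom(X_i,X_j)=\Ext^{-1}(X_i,X_j)=0$ for $i \ne j$ collapses this sum drastically. When $N \in \S$, only $\Ext^1(M,N)$ and $\Ext^1(N,M)$ survive, giving $\omega(\beta,\gamma) = \dim_\kk \Ext^1(M,N) - \dim_\kk \Ext^1(N,M)$, which is positive for $N \in J_2^+$ and negative for $N \in J_1$. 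When $N = N'[-1] \in \S[-1]$, the shift identity $\Ext^r(M[i],N[j]) \cong \Ext^{r-i+j}(M,N)$ turns the surviving terms into $\Ext^1(M,N) \cong \Hom(V_i,N')$ and $\Ext^2(M,N) \cong \Ext^1_\S(V_i,N')$, while every $\Ext^r(N,M)$ vanishes, so $\omega(\beta,\gamma) = \dim_\kk \Hom(V_i,N') - \dim_\kk \Ext^1_\S(V_i,N')$, which is positive for $N \in J_2^-$ and negative for $N \in J_3$; elements with no $\Ext$ to $V_i$ give $\omega = 0$ and are fixed, consistent with either branch since then $(\beta,\gamma)=0$ forces $t_\beta\gamma = \gamma$. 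The main obstacle is precisely this derived-category bookkeeping --- keeping straight which single $\Ext$ group survives in each of the four cases $J_1, J_2^+, J_2^-, J_3$ after the shift formula is applied --- since the sign of $\omega$ must match the braiding behaviour case by case. Once the vanishing forced by the noncrossing and hereditary conditions is invoked, each case reduces to the sign of a single dimension, and the agreement with the Reflection Condition follows at once.
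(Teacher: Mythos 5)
Your proof is correct and follows essentially the same route as the paper's: both use the $J_1$, $J_2$, $J_3$ trichotomy from the proof of Lemma~\ref{CanMutate}, apply Lemma~\ref{MutationReflection} to see that exactly the $J_2$ elements are reflected while the rest are fixed, and match this against the sign of $\omega(\beta,\gamma)$. The only difference is one of detail: the paper simply asserts the signs $\omega(\beta,\gamma)<0$ on $J_1\cup J_3$, $>0$ on $J_2$, and $=0$ otherwise, whereas you carry out the $\Ext$-group bookkeeping (correctly) that justifies them.
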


\begin{proof}
Let vertices $\vertex$ and $\vertex'$ correspond to noncrossing sequences $V_{\bullet}$ and $V'_{\bullet}$, linked by mutation at $V_i$.
Let $[V_i] = \beta$ and let $\gamma = [V_j]$ be another element of $C(v)$.
We continue the notations $J_1$, $J_2$ and $J_3$ from the proof of Lemma~\ref{CanMutate}.

If $V_j$ is in $J_1$ or $J_3$, then $\omega(\beta, \gamma)<0$. Also, in this case, $V_j \in V'_{\bullet}$, so $\gamma \in C(v')$ as desired.
If $V_j$ is in $J_2$, then $\omega(\beta, \gamma) > 0$. Also, in this case, $V_i$ is braided over $V_j$ to obtain an element of $V'_{\bullet}$. 
So, by Lemma~\ref{MutationReflection}, $t_{\beta} \gamma \in C(\vertex')$ as desired.
Finally, we consider the case that there are no $\Ext$'s between $V_i$ and $V_j$, in which case $\omega(\beta, \gamma) =0$.
In this case, whether or not $V_i$ is braided over $V_j$, the object $V_j$ occurs in $V'_{\bullet}$, so $\gamma \in C(\vertex')$, as desired.
\qed\end{proof}

We have now checked that $G$ is connected and $n$-regular, and that the Reflection and Euler conditions hold. The Base condition is obvious, corresponding to the noncrossing partition $(S_1, \ldots, S_n)$. We conclude:
\begin{theorem} \label{NonCrossFramework}
$(G,C)$ is a complete reflection framework.
\end{theorem}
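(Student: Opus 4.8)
The plan is to assemble the pieces established over the course of this section, since a complete reflection framework (Section~\ref{decategorify}) is by definition nothing more than the conjunction of four requirements: that $G$ be a connected $n$-regular graph, and that $C$ satisfy the Base condition, the Reflection condition, and the Euler conditions (E1)--(E3). Almost all of these have already been verified, so the real content of the proof is to confirm that the list is complete and to dispatch the one remaining item.

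First I would recall regularity and connectedness. By Lemma~\ref{CanMutate} a noncrossing mutation can be performed at each of the $n$ terms of a noncrossing sequence, by Proposition~\ref{OneMutate} the resulting commutation class is uniquely determined, and the $n$ values $C(\vertex,e)=[V_i]$ are distinct; hence every vertex has exactly $n$ incident edges and $G$ is $n$-regular. Connectedness is the content of the corollary following Lemma~\ref{EulerLem}, which identifies $G$ with the dual graph of the cluster complex and appeals to Theorem~\ref{connected}. The Euler conditions are precisely Lemma~\ref{EulerLem}, and the Reflection condition is Lemma~\ref{RefLem}. I would also note in passing the promised consequence that each $C(\vertex,e)$ is a real root: it is the class $[M]$ of an exceptional object of $\S\cup\S[-1]$, and propagating along paths from the base vertex via Lemma~\ref{MutationReflection} (each mutation replaces a class $\gamma$ by $t_\beta\gamma$) shows every such class lies in $W\cdot\{\alpha_i\}$.

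This leaves only the Base condition, which is where I would do the small amount of genuine checking. I would take $v_b$ to be the commutation class of $(S_1,\dots,S_n)$; this is a noncrossing exceptional sequence because the simples satisfy $\Hom(S_i,S_j)=0=\Ext^{-1}(S_i,S_j)$ for $i\ne j$. The $n$ edges at $v_b$ are the mutations at the various $S_i$, so $C(v_b)=\{[S_1],\dots,[S_n]\}$, and under the identification $[S_i]=\alpha_i$ fixed at the start of this section this is exactly the set of simple roots.

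I do not anticipate a genuine obstacle at this stage: the substantive arguments all live in the preceding lemmas, and this theorem is the bookkeeping step that collects them into the definition of a framework. The only point deserving a moment's care is verifying that the base vertex really lies in $G$ and that its label is read off correctly as the simple roots; once that is in hand the conclusion is immediate.
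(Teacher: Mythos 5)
Your proposal is correct and follows essentially the same route as the paper, which likewise proves Theorem~\ref{NonCrossFramework} simply by collecting Lemma~\ref{CanMutate}, Proposition~\ref{OneMutate}, the connectedness corollary, Lemma~\ref{EulerLem}, and Lemma~\ref{RefLem}, and then observing that the Base condition holds at the vertex corresponding to $(S_1,\dots,S_n)$. Your explicit check that $(S_1,\dots,S_n)$ is noncrossing, and your remark that the labels $C(\vertex,e)$ are real roots via Lemma~\ref{MutationReflection}, are slightly more detail than the paper gives (it declares the Base condition ``obvious''), but the substance is identical.
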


In particular, we now know
\begin{theorem}
Every acyclic cluster algebra comes from a complete reflection framework.
\end{theorem}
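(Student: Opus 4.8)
The plan is to deduce the statement directly from Theorem~\ref{NonCrossFramework}, which does all the substantive work. By definition, a cluster algebra is acyclic precisely when it admits a seed whose exchange matrix has acyclic principal part. I would begin by fixing such a seed and declaring its skew-symmetrizable exchange matrix to be the initial matrix $B^0$ of the construction.

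Before applying Theorem~\ref{NonCrossFramework}, I would arrange that $B^0$ satisfies the normalization standing throughout the paper, namely $b^0_{ij} \geq 0$ for $i<j$ (see Section~\ref{defS}). The quiver attached to an acyclic matrix has no oriented cycle, so its vertices admit a topological order; relabeling the index set $I$ according to such an order makes every arrow point from a smaller index to a larger one, which is exactly the required condition. This relabeling merely permutes $I$ and changes neither the cluster algebra nor its seeds.

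With $B^0$ so normalized, the construction of Sections~\ref{defS}--\ref{decategorify} attaches to it the hereditary category $\S$, the $n$-regular graph $G$ of commutation classes of noncrossing exceptional sequences, and the root-valued function $C$. Theorem~\ref{NonCrossFramework} asserts that $(G,C)$ is a complete reflection framework for $B^0$. Following the definition taken from \cite{RS3}, the existence of such a framework for the initial matrix is precisely what it means for the associated cluster algebra to come from a complete reflection framework, so the theorem follows at once.

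I do not anticipate a genuine obstacle, since the argument only assembles results already in hand; the one point meriting care is the bookkeeping of the first two paragraphs---verifying that the chosen acyclic seed really does meet the standing hypotheses, and that passing to it leaves the cluster algebra unchanged. It is worth noting that the converse direction quoted from \cite{RS3} was established only conditionally, under certain standard cluster-algebra conjectures; the value of the present deduction is that Theorem~\ref{NonCrossFramework} supplies the framework unconditionally.
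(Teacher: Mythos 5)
Your proposal matches the paper's own treatment exactly: the paper deduces this theorem immediately from Theorem~\ref{NonCrossFramework} (``In particular, we now know''), with no further argument. The bookkeeping you supply---choosing an acyclic seed and relabeling indices by a topological order of the quiver so that $b^0_{ij}\geq 0$ for $i<j$, which changes neither the cluster algebra nor its seeds---is precisely the normalization the paper leaves implicit, so there is no gap.
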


\section{Consequences of the Framework result} \label{FrameCons}

Recall that, in the introduction, we labeled every vertex $\vertex$ of $\Tree$ by an extended $B$-matrix $\tilde{B}^v$, related to each other by matrix mutation.

\begin{theorem} \label{GoodStuffRoots}
There is a covering map $\pi: \Tree \to G$ such that, if $\vertex \in \Tree$ and $\pi(\vertex)$ corresponds to the noncrossing sequence $(V_1, \ldots, V_n)$, then
\begin{enumerate}
\item[(1)] The columns of the bottom half of $\tilde{B}^{\vertex}$, also known as the $c$-vectors, are the $\beta_i$ in $C(\pi(\vertex))$.
\item[(2)] Reordering the rows and columns of $\tilde{B}$ to match the order of the $V_i$, we have $b^{\vertex}_{ij} = d_i^{-1} \omega(\beta_i^{\vee}, \beta_j)$.
\end{enumerate}
\end{theorem}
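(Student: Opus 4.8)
The plan is to derive the theorem from the general recovery results of~\cite{RS3}, applied to the complete reflection framework $(G,C)$ produced in Theorem~\ref{NonCrossFramework}. Recall that~\cite{RS3} shows that a complete reflection framework for the initial matrix $B^0$ reconstructs the whole cluster pattern: the framework data $C(\vertex,e)$ become the $c$-vectors and the top halves of the extended $B$-matrices are read off from the form on $V$. Since $\Tree$ is the $n$-regular tree and, as established above, $G$ is connected and $n$-regular, $\Tree$ is the universal cover of $G$; fixing the basepoints $\vertex_b\mapsto\vertex_b$ will determine a covering map $\pi$ once we have matched up edges, which is what the induction below accomplishes. Thus the substance of the argument is the identification of framework data with cluster data, and the covering-map statement is formal once that identification is in place.

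First I would set up $\pi$ together with statement~(1) as a single induction on the distance of $\vertex\in\Tree$ from $\vertex_b$. At $\vertex_b$ the bottom half of $\widetilde B^0$ is the identity, whose columns are the standard basis vectors; these are exactly the simple roots $\alpha_i$, which form $C(\vertex_b)$ by the Base condition, so~(1) holds at $\vertex_b$. For the inductive step I would send the edge of $\Tree$ at $\vertex$ labelled $i$ to the edge $e$ of $G$ at $\pi(\vertex)$ with $C(\pi(\vertex),e)$ equal to the $i$-th $c$-vector; this is well defined because the $C(\pi(\vertex),e)$ are distinct (Proposition~\ref{OneMutate}) and, by the inductive hypothesis, exhaust the $c$-vectors at $\vertex$. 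The content of the step is that the cluster-algebra exchange of the bottom half of $\widetilde B$ agrees with the Reflection condition: using sign-coherence (a consequence of Theorem~\ref{repdesc}), each $c$-vector is a genuine positive or negative root, so the piecewise $c$-vector mutation rule collapses to ``apply the reflection $t_\beta$'' or ``do nothing'' according to the sign of $\omega(\beta,\gamma)$, matching the two cases of the Reflection condition. Since $\pi$ is then a local bijection on edges and $G$ is connected, $\pi$ is a covering map.

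For statement~(2) I would show that $b^{\vertex}_{ij}=d_i^{-1}\omega(\beta_i^{\vee},\beta_j)$ holds at $\vertex_b$ and is preserved by mutation. The base case is a direct computation on the simples: using $E([X],[Y])=\sum(-1)^r\dim_{\kk}\Ext^r(Y,X)$ together with items~(1)--(4) of the construction of $\S$ in Section~\ref{defS}, one evaluates $\omega(\alpha_i^{\vee},\alpha_j)$ and compares it with the entries of $B^0$, invoking the symmetrizability relation $d_i b^0_{ij}=-d_j b^0_{ji}$. For the inductive step, one feeds the way the roots transform under the Reflection condition into $\omega$ and checks that the resulting change in $\omega(\beta_i^{\vee},\beta_j)$ reproduces the matrix-mutation rule $\mu_i$ on the top half; this is precisely the computation underlying the $B$-matrix recovery in~\cite{RS3}.

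The main obstacle lies entirely in this matching of mutation rules: reconciling the piecewise, sign-dependent exchange rules for the $c$-vectors and for the top half of $\widetilde B$ with the uniform reflection-and-$\omega$ description supplied by the framework, and keeping the sign conventions straight (see the warnings in Section~\ref{signsec}, in particular the order-reversal built into $E([X],[Y])=\sum(-1)^r\dim_{\kk}\Ext^r(Y,X)$). Since this matching is exactly what~\cite{RS3} establishes in general, the real labour is a careful invocation and translation of that work rather than a genuinely new calculation; granting~\cite{RS3}, the covering map and both formulas follow from Theorem~\ref{NonCrossFramework}.
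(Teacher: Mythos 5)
Your proposal is correct and matches the paper exactly: the paper's entire proof of Theorem~\ref{GoodStuffRoots} is the single observation that it is part of the main result of~\cite{RS3}, applied to the complete reflection framework $(G,C)$ of Theorem~\ref{NonCrossFramework}. Your additional sketch of the inductive matching of the Reflection condition with $c$-vector mutation is a reasonable outline of what~\cite{RS3} itself does, but the paper delegates all of that labour to the citation, just as you ultimately do.
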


This is part of the main result of~\cite{RS3}. 

We can unfold the definitions of $\beta_i$ and $\omega$ to restate this in more representation theoretic language.
\begin{theorem} \label{GoodStuffRepThry}
With notation as above,
\begin{enumerate}
\item[(1)] The $c$-vectors are the dimension vectors of the $V_i$.
\item[(2)]  Reorder the rows and columns of $\tilde{B}$ to match the order of the $V_i$. Let $K_i = \mathrm{End}(V_i)$.
 If $j<k$, then 
$$b^{\vertex}_{jk}=\dim_{K_j} \Ext^1(V_{j},V_{k})  - \dim_{K_j} \Ext^2(V_{j}, V_{k}).$$
If $k<j$ then 
$$b^{\vertex}_{jk} = -\dim_{K_k} \Ext^1(V_{k},V_{j})  + \dim_{K_k} \Ext^2(V_{k}, V_{j}).$$
\end{enumerate}
\end{theorem}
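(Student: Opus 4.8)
The plan is to deduce Theorem~\ref{GoodStuffRepThry} directly from Theorem~\ref{GoodStuffRoots} by unwinding the definitions of $\beta_i^\vee$, $\omega$, and $E$ in representation-theoretic terms. Part~(1) is immediate: Theorem~\ref{GoodStuffRoots}(1) says the $c$-vectors are the roots $\beta_i = C(\pi(\vertex))$, and under the identification $V \cong K_0(\S) \otimes \RR$ fixed at the start of Section~\ref{decategorify}, each $\beta_i$ is by construction the class $[V_i]$, i.e.\ the dimension vector of $V_i$. So the only real content is translating the formula $b^{\vertex}_{jk} = d_j^{-1} \omega(\beta_j^\vee, \beta_k)$ of Theorem~\ref{GoodStuffRoots}(2) into the stated alternating sums of $\Ext$-dimensions.

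First I would recall that $\omega(\beta, \beta') = E(\beta, \beta') - E(\beta', \beta)$ and that, per Section~\ref{decategorify}, $E([X],[Y]) = \sum_r (-1)^r \dim_{\kk} \Ext^r(Y,X)$. Since $\S$ is hereditary and the relevant objects sit in $\S$ or $\S[-1]$, only $\Ext^0 = \Hom$, $\Ext^1$, and $\Ext^2$ can be nonzero on the pairs in question. The key simplification comes from the noncrossing/exceptional structure: when $V_j, V_k$ are two terms of a noncrossing exceptional sequence with $j < k$, many of these $\Ext$ groups vanish. I would therefore compute $E([V_j],[V_k])$ and $E([V_k],[V_j])$ separately, using the vanishing conditions built into the definition of a noncrossing exceptional sequence (namely $\Hom(V_j,V_k) = 0 = \Ext^{-1}(V_j,V_k)$ for $j \neq k$, reinterpreted after shifting everything into $\S$) to kill all but the terms appearing in the claimed formula.

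The main obstacle will be the bookkeeping of the degree shift $d_j^{-1}$ together with the distinction between $\dim_{\kk}$ and $\dim_{K_j}$. Because $E$ is defined using $\dim_{\kk}$ while the target formula uses $\dim_{K_j} = \dim_{\kk}/\dim_{\kk} K_j$, and $d_j = \dim_\kk K_j$ by the defining property of $\S$ in Section~\ref{defS}, I expect the factor $d_j^{-1}$ in Theorem~\ref{GoodStuffRoots}(2), combined with $\beta_j^\vee = d_j^{-1}\beta_j$, to be exactly what converts a $\kk$-dimension into a $K_j = \End(V_j)$-dimension. I would check that $\dim_\kk \End(V_j) = d_j$ for an exceptional object $V_j$ so that these normalizations line up; this is where the two cases $j<k$ and $k<j$ genuinely differ, since the form $E$ is not symmetric and the order of arguments determines which endomorphism ring appears and which $\Ext$ degree survives. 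Once the surviving terms are identified as $\Ext^1$ and $\Ext^2$ with the correct signs, the two displayed formulas follow by reading off $\omega(\beta_j^\vee, \beta_k)$ in the two orderings.

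Concretely, I anticipate the $j<k$ case reduces to showing $E([V_k],[V_j]) = 0$ (so that $\omega$ picks out only $E([V_j],[V_k])$, up to normalization), which follows because $\Hom$ and $\Ext^{-1}$ between distinct terms of a noncrossing sequence vanish and $\S$ is hereditary; the remaining contribution $\sum_r (-1)^r \dim \Ext^r(V_k, V_j)$ collapses to $-\dim \Ext^1 + \dim \Ext^2$ as claimed after dividing by $\dim_\kk K_j$. The $k<j$ case is handled symmetrically, with the sign reversal coming from the antisymmetry of $\omega$. The proof is thus essentially a verification that the abstract Coxeter-theoretic data extracted in~\cite{RS3} agrees, term by term, with the homological data of the category $\S$.
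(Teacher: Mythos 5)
Your overall route is exactly the paper's: the paper gives no independent proof of Theorem~\ref{GoodStuffRepThry} beyond the sentence preceding it (``we can unfold the definitions of $\beta_i$ and $\omega$''), and your plan --- part~(1) from Theorem~\ref{GoodStuffRoots}(1) together with $\beta_i = [V_i]$, part~(2) by expanding $\omega(\beta,\beta') = E(\beta,\beta') - E(\beta',\beta)$ and killing terms --- is precisely that unfolding. However, your concrete execution of the $j<k$ case has the two $E$-terms swapped, and the swap is not cosmetic. With the paper's reversed convention $E([X],[Y]) = \sum_r (-1)^r \dim_{\kk} \Ext^r(Y,X)$, the term that vanishes for $j<k$ is $E([V_j],[V_k]) = \sum_r (-1)^r \dim_{\kk}\Ext^r(V_k,V_j)$, and it vanishes because $(V_j, V_k)$ sits in an exceptional sequence, so $\Ext^{\bullet}(V_k,V_j) = 0$ for $k > j$ --- \emph{not} because of the noncrossing $\Hom/\Ext^{-1}$ conditions, as you assert. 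Your claimed vanishing $E([V_k],[V_j])=0$ unpacks to $\sum_r (-1)^r \dim_{\kk}\Ext^r(V_j,V_k) = 0$, which is false in general: those are exactly the groups that produce $b^{\vertex}_{jk}$. Carried out literally, your computation would give $\omega(\beta_j,\beta_k) = E(\beta_j,\beta_k) = \sum_r (-1)^r \dim_{\kk}\Ext^r(V_k,V_j)$, whose terms all vanish for $j<k$, i.e.\ $b^{\vertex}_{jk} = 0$. The correct bookkeeping is $\omega(\beta_j,\beta_k) = -E(\beta_k,\beta_j) = -\sum_r (-1)^r \dim_{\kk}\Ext^r(V_j,V_k)$; \emph{here} the noncrossing conditions kill $\Hom$ and $\Ext^{-1}$, and heredity plus $V_i \in \S \cup \S[-1]$ kill $\Ext^{r}$ for $r \geq 3$, leaving $\dim_{\kk}\Ext^1(V_j,V_k) - \dim_{\kk}\Ext^2(V_j,V_k)$ as desired; the $k<j$ case is the mirror image, with exceptionality now killing $E(\beta_k,\beta_j)$.

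There is also a slip in your normalization plan. You propose to combine the prefactor $d_j^{-1}$ in Theorem~\ref{GoodStuffRoots}(2) with $\beta_j^{\vee} = d_j^{-1}\beta_j$, but stacking both would produce $d_j^{-2}$: a single factor $(\dim_{\kk} K_j)^{-1}$ suffices to convert $\dim_{\kk}$ into $\dim_{K_j}$. Moreover, your proposed check ``$\dim_{\kk}\End(V_j) = d_j$'' is not literally available, since $d_j$ is attached to the simple $S_j$ while $V_j$ is a general exceptional object. What is true, and is established inside the proof of Lemma~\ref{MutationReflection}, is $(\beta_j,\beta_j) = 2\dim_{\kk}\End(V_j)$, so $\beta_j^{\vee} = 2\beta_j/(\beta_j,\beta_j) = \beta_j/\dim_{\kk} K_j$; since $W$ preserves the symmetric form, $\dim_{\kk}\End(V_j)$ equals the symmetrizer entry of the simple root in the $W$-orbit of $\beta_j$, which is the relevant $d$ after the rows and columns are reordered to match the $V_i$. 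These corrections are local, but as written the key verification step of your proposal fails.
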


\begin{remark} \label{NoCases}
From the definition of an exceptional sequence, we can restate (2) without cases by writing
\begin{eqnarray*}
b^{\vertex}_{jk}&=&\dim_{K_j} \Ext^1(V_{j},V_{k}) 
- \dim_{K_j} \Ext^2(V_{j}, V_{k}) \\
& &\qquad -\dim_{K_k} \Ext^1(V_{k},V_{j}) 
+ \dim_{K_k} \Ext^2(V_{k}, V_{j}).
\end{eqnarray*}
\end{remark}

\begin{example}
In the situation of Example~\ref{RunningExample}, the $\tilde{B}$ matrix is
$$\left( \begin{array}{r r r r r}
0 & 1 & 0 & -1 & 0 \\
-1 & 0 & 1 & 1 & -1 \\
0 & -1 & 0 & 0 & 1 \\
1 & -1 & 0 & 0 & 0 \\
0 & 1 & -1 & 0 & 0 \\
\hline
1 & 0 & 0 & 0 & 0 \\
0 & 1 & 0 & -1 & 0 \\
0 & 1 & 0 & 0 & 0 \\
0 & 0 & 1 & 0 & -1 \\
0 & 0 & 1 & 0 & 0 \\
\end{array} \right)$$
The top half is computed from the table of $\Ext$ groups in Example~\ref{RunningExample}; the bottom half is computed from the dimension vectors of the exceptional objects.
\end{example}

For many other consequences of the framework result, including formulas for $g$-vectors, see~\cite{RS3}.

\section{Proof of Theorem~\ref{characterize}}

In this section, we 
prove Theorem
\ref{characterize}, the combinatorial characterization of the collections
of $c$-vectors.  


Assume that $b_{ij} \geq 0$ for $i \geq j$.
Let $c$ be the element $s_1 s_2 \cdots s_n$ of $W$, where $s_i$ is the reflection in $\alpha_i$.
We define a Coxeter factorization to be a sequence $(t_1, t_2, \ldots, t_n)$ of reflections of $W$ such that $t_1 t_2 \cdots t_n = c$.
Given an exceptional sequence $(M_1, M_2, \cdots, M_n)$, let $\beta_j = [M_j]$ and let $t_j$ be the reflection in $\beta_j$.
It is easy to see that $t_1 t_2 \cdots t_n$ is a Coxeter factorization; because this property can be showed to be preserved by mutations, and the braid group action on exceptional sequences is well known to be transitive.
Igusa and Schiffler~\cite{IS} showed that, conversely,
given any Coxeter factorization $t_1t_2\dots t_n=c$, there is an
exceptional sequence $(M_1,\dots,M_n)$ such that $t_i$ is the reflection
in $[M_i]$.


\begin{proof}[of Theorem \ref{characterize}]
The fact that the conditions given in Theorem~\ref{characterize} are 
necessary is straightforward.
The exceptional sequence gives rise to a Coxeter factorization.
If $v_i$ and $v_j$ are both positive, then $M_i$ and $M_j$ are objects in $\S$. 
Combining the noncrossing condition with the hereditary nature of $\S$, we see that $\Ext^r(M_i, M_j) = \Ext^r(M_j, M_i) = 0$ for $r \neq 1$, and we deduce that $(v_i, v_j) \leq 0$. 
Similarly, if $v_i$ and $v_j$ are negative than $(v_i, v_j) \leq 0$.

Now suppose that we have a sequence of roots satisfying the conditions
of the theorem.  Let $v_1,\dots,v_r$ be positive roots, and $v_{r+1},
\dots, v_n$ be negative roots, such that the product of the 
corresponding sequence of 
reflections is $s_1\dots s_n$.  
By the main result of Igusa and Schiffler \cite{IS}, 
there is a corresponding exceptional sequence 
$E_1$, \dots, $E_n$ such that $[E_i]$ is the reflection in $v_i$.
For an arbitrary such $E_i$, we have $[E_i] = \pm v_i$. 
By replacing the $E_i$ by appropriate shifts, we may assume that $[E_i] = v_i$ and $E_i \in \S \cup \S[-1]$; we make this assumption from now on.

We now check that this exceptional sequence is noncrossing.  
For $1 \leq i < j \leq r$, condition~(2) tells us that 
$$0 \geq ([E_i], [E_j]) = \dim_{\kk} \Hom(E_i, E_j) - \dim_{\kk} \Ext^1(E_i, E_j)$$
where we have used that $E_i$, $E_j$ is exceptional and that $\S$ is hereditary to remove the other terms defining the symmetric bilinear form.
By Lemma~\ref{onlyone}, at most one of the two terms on the right is nonzero, so it must be the second one. We have shown that $\Hom(E_i, E_j)$ vanishes as desired.
The same argument applies when $r+1\leq i<j\leq n$.  

If $1\leq i\leq r<j\leq n$, so that $E_i \in \S$ and $E_j \in \S[-1]$, then we automatically have
$\Hom(E_i,E_j)=0=\Ext^{-1}(E_i,E_j)$.  This shows that the exceptional
sequence is noncrossing.  By Theorem \ref{repdesc}, it corresponds to some
vertex $v$ in $\Tree$, and we are done.  
\qed\end{proof}

\section{Link to the cluster category}\label{linksec}
Our paper establishes a link between acyclic cluster algebras and the representation theory
of finite-dimensional algebras.  There is, of course, another such link
which is already well-known, going through the construction of cluster categories \cite{bmrrt}.  We will now recall the 
cluster
category in more detail, and explain the connection between these two
categorifications.  

Let $B^0$ be a skew-symmetric, acyclic matrix.  
For this section, we take $\S$ to be the modules over $\kk Q$, with
$Q$ the quiver with $b_{ij}$ arrows from $i$ to $j$, and $\kk$ an
algebraically closed ground field.  

The cluster category
associated to $B^0$ is by definition $\mathcal C=D^b(\S)/[1]\tau^{-1}$, where 
$\tau$ is the Auslander-Reiten translation.  An object $X$ in 
$\mathcal C$ is called {\it exceptional} if it is indecomposable and
satisfies $\Ext^1(X,X)=0$.  An object $T$ is 
{\it cluster tilting} if it is the direct sum of $n$ distinct 
exceptional summands and $\Ext^1(T,T)=0$.

There is a bijection $\phi$ from the cluster variables of the cluster
algebra $\A$ associated to $B^0$ to exceptional
objects of $\mathcal C$, which extends to a bijection from clusters
in $\A$ to cluster tilting objects in $\mathcal C$ \cite{bmrtck,ck2}.
We denote the cluster variables associated to $\vertex \in\Tree$ by 
$\{x_i^\vertex\}$, where the cluster variables are numbered so that if
vertices $\vertex$ and $\vertex'$ are related by an edge labelled $k$, then
$x_i^{\vertex}=x_i^{\vertex'}$ for $i\ne k$.  


%

\begin{theorem}\label{link} 
Let $\vertex$ be a vertex of $\Tree$.  Let $(V_{1},
\dots, V_{n})$ be the noncrossing exceptional
sequence described in Theorem~\ref{repdesc}. Let $\murev(V_1, \ldots, V_n)=(X_{n},\dots,X_{1})$.
Then $X_i=\phi(x_i^\vertex)$.  
\end{theorem}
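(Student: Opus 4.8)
The plan is to induct on the distance in $\Tree$ from the base vertex $\vertex_b$, using that the bijection $\phi$ of \cite{bmrtck,ck2} is determined by two properties: it sends the initial cluster to the initial cluster tilting object, and it intertwines seed mutation with mutation of cluster tilting objects in $\mathcal{C}$. First I would check that the construction $\vertex \mapsto \{X_i\}$ is well defined as a map to cluster tilting objects. By Theorem~\ref{bijection}, $\murev$ sends the noncrossing sequence of $\vertex$ to a cluster exceptional sequence, whose $n$ distinct terms lie in $\S \cup \{P_i[1]\}$ with pairwise vanishing $\Ext^1$; these terms are exactly distinct indecomposables of $\mathcal{C}$, so their direct sum is a cluster tilting object. (This is the Buan--Marsh--Reiten--Reineke--Todorov correspondence identifying cluster tilting objects with the maximal faces of the completed tilting complex.) It then suffices to match $\vertex \mapsto \bigoplus_i X_i$ with $\vertex \mapsto \bigoplus_i \phi(x_i^\vertex)$ compatibly with the indexing.

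For the base case, the bottom half of $\widetilde B^0$ is the identity, so the $c$-vectors at $\vertex_b$ are the simple roots and, by Theorem~\ref{repdesc}, the associated noncrossing sequence is $(S_1,\dots,S_n)$. Computing $\murev(S_1,\dots,S_n)$ directly from the definition in Section~\ref{murev} gives $(P_n[1],\dots,P_1[1])$, as one sees already in Examples~\ref{A2Example2} and~\ref{RunningExample3}. Thus $X_i = P_i[1]$, which is exactly the object assigned by $\phi$ to the initial cluster variable $x_i^{\vertex_b}$. This establishes the base case and simultaneously pins down the index correspondence $V_i \leftrightarrow X_i$.

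For the inductive step, let $\vertex'$ be adjacent to $\vertex$ across the edge labelled $k$, so $x_i^{\vertex'} = x_i^\vertex$ for $i \neq k$. Via the covering map $\pi$ of Theorem~\ref{GoodStuffRoots}, this edge is a noncrossing mutation; since the $k$-th $c$-vector is $[V_k]$ by Theorem~\ref{GoodStuffRepThry}(1) and mutation at $k$ changes precisely $c_k$, it is a noncrossing mutation at $V_k$. By Theorem~\ref{bijection} together with the two-faces-per-wall part of Theorem~\ref{connected}, the cluster exceptional sequences $\murev(V_\bullet(\vertex))$ and $\murev(V_\bullet(\vertex'))$ have underlying sets differing in exactly one object --- precisely the phenomenon of Example~\ref{RunningExample3} --- obtained by braiding the $\murev$-image of $V_k$ under a subset, as in the proof of Proposition~\ref{OneMutate}. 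Projecting to $\mathcal{C}$, this single-object change is a mutation of cluster tilting objects, so by the inductive hypothesis and the mutation-compatibility of $\phi$ we get $X_i(\vertex') = \phi(x_i^{\vertex'})$ for $i \neq k$ and $X_k(\vertex') = \phi(x_k^{\vertex'})$.

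The delicate point, and the step I expect to be the main obstacle, is bookkeeping the indices through the order-reversal built into $\murev$: the theorem lists $\murev(V_1,\dots,V_n) = (X_n,\dots,X_1)$, so I must verify that the object whose class changes under noncrossing mutation at $V_k$ is exactly $X_k$ (hence $\phi(x_k^\vertex)$), and not some $X_j$ with $j \neq k$. I would control this with the ``corresponding element'' notion from the proof of Proposition~\ref{OneMutate}, which identifies $X_k$ as the $\murev$-image of $V_k$, together with Lemma~\ref{comrev}, $\murev\mu_i = \mu_{n-i}\murev$, which converts a mutation in position $i$ on the noncrossing side into one in the reversed position on the cluster side. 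Once this correspondence is confirmed to be stable across mutations, the inductive step closes and the theorem follows.
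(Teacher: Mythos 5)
Your proposal is correct and follows essentially the same route as the paper's own proof: induction along $\Tree$, the identical base-case computation $\murev(S_1,\dots,S_n)=(P_n[1],\dots,P_1[1])$, and the key observation that a noncrossing mutation becomes, after applying $\murev$, a change of a single object (one term braided behind, up to commutation moves), hence a single-summand mutation of cluster tilting objects, which is then matched to seed mutation via the bijection $\phi$. One small repair: your justification that ``mutation at $k$ changes precisely $c_k$'' is literally false (other $c$-vectors can move, by $t_{c_k}$ or not at all, per the Reflection Condition); what correctly identifies the edge labelled $k$ with the noncrossing mutation at $V_k$ is that mutation at $k$ flips the sign of column $k$, so the covering map $\pi$ of Theorem~\ref{GoodStuffRoots} sends this edge to the framework edge $e$ with $C(\pi(\vertex),e)=c_k^\vertex=[V_k]$, which by the construction of $G$ is the noncrossing mutation at $V_k$ --- after which your index bookkeeping goes through as written.
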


\begin{proof}
The proof is by induction. We first check that the statement holds for the 
initial cluster.  

$\mu_1\dots\mu_{n-1}(S_1,\dots,S_n)=(X,S_1,\dots,S_{n-1})$, and since 
$(P_n,S_1,\dots,S_{n-1})$ is an exceptional sequence, we must have 
$X=P_n$.  Similarly, $\mu_1\dots\mu_{n-2}(P_n,S_1,\dots,S_{n-1})=
(P_n,P_{n-1},S_1,\dots,S_{n-2})$.  It follows that 
$\murev(S_1,\dots,S_n)=(P_n[1],\dots,P_1[1])$, as desired.  

We then check that if the statement holds for $\vertex\in\Tree$, and 
$\vertex'$ is adjacent to $\vertex$ along an edge labelled $i$, then
it also holds for $\vertex'$.
Suppose we have noncrossing sequences $V_\bullet$ and $V'_\bullet$ associated
to the two vertices, so they are related by a noncrossing mutation.  
It follows that $\murev(V_\bullet)$ and $\murev(V'_\bullet)$ differ by 
commutation moves and braiding a single object behind, which implies that
these sequences, viewed as cluster tilting objects, differ in exactly 
one summand, corresponding to the cluster variable being mutated as we
pass between $\vertex$ and $\vertex'$.  
\qed\end{proof}

This leads to some corollaries.  We use essentially none of
the results that have been developed 
about cluster tilting objects in our proof of Theorem 
\ref{link}.  One could therefore use Theorem \ref{link} to redevelop the 
theory of cluster categories. (For example, one could reprove 
that the Gabriel quiver of the cluster
tilting object associated to $\vertex$ encodes $B^\vertex$, and 
that if $T$ and $T'$ are cluster
tilting objects related by mutation, their Gabriel quivers are related by
Fomin-Zelevinsky mutation \cite{BMR}).

\subsection*{Acknowledgements} The authors would like to thank Andrei
Zelevinsky for helpful comments and encouragement. We would also like to thank Nathan Reading for attempting to fit the notations of his joint work with DES as closely as possible to those in this paper, for his patience with the delays that caused, and for helpful comments and questions.  

 During some of the time this work was done, DES was supported by a Clay Research Fellowship; HT is partially 
supported by an NSERC Discovery Grant.  The authors began their collaboration at the International Conference on Cluster Algebras and Related Topics, hosted by IMUNAM; the authors are grateful for the superb opportunities for discussion we found there. Much of HT's work on this paper was
done during a visit to the Hausdorff Institute; he is grateful for the stimulating research conditions which it provided.

\section*{Appendix: Derived Categories of Hereditary Categories}
\renewcommand{\thetheorem}{A.\arabic{theorem}}
\renewcommand{\theremark}{A\arabic{theorem}}
\def\C{\mathcal{C}}
\def\from{\leftarrow}
This paper uses the language of derived categories, because it is the simplest and most natural language in which to present our results.
However, we fear that this might frighten away some readers, who feel that nothing which mentions the word ``derived" can be elementary.
We therefore seek to explain why, in this case, the derived category is not an object to be feared.

Let $A$ be a ring (not necessarily commutative) and let $\C$ be the category of finitely generated $A$-modules. 
We will write $\Hom_{\C}$ and $\Ext_{\C}$ for Hom and Ext of $A$-modules, so that undecorated $\Hom$ and $\Ext$ can stand for the Hom and Ext in the derived category, as they do throughout this paper.
A complex of $A$-modules is a doubly-infinite sequence $\cdots \from C_{-1} \from C_0 \from  C_1 \from C_2 \from  \cdots$ of $A$-modules and $A$-module maps, such that the composition $C_i \from C_{i+1} \from C_{i+2}$ is $0$ for all $i$.
All our complexes will be bounded, meaning that all but finitely many $C_i$ are zero; we usually will not mention this explicitly.
For a complex $C_{\bullet}$, we write $H_i(C_{\bullet})$ for the homology group $\mathrm{Ker}(C_{i-1} \from C_{i})/\mathrm{Im}(C_{i} \from C_{i+1})$.

Objects of the derived category are bounded complexes, but many different bounded complexes can be isomorphic to each other in the derived category and, as usual in category theory, there will be little reason to distinguish isomorphic objects.
For a general derived category, if complexes $B_{\bullet}$ and $C_{\bullet}$ are isomorphic, then we can deduce that $H_i(B_{\bullet}) \cong H_i(C_{\bullet})$, but the converse does not hold.

However, now suppose that the ring $A$ is what is called \emph{hereditary}, meaning that $\Ext_\C^j(M,N)$ vanishes for all $j \geq 2$ and all $A$-modules $M$ and $N$.
Then we have
\begin{theorem}[{\cite[Section I.5.2]{Hap}}]\label{HereditaryEasy}
If $A$ is hereditary, then the complexes $B_{\bullet}$ and $C_{\bullet}$ are isomorphic in the derived category if and only if $H_i(B_{\bullet}) \cong H_i(C_{\bullet})$ for all $i$.
\end{theorem}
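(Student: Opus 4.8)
The plan is to prove both directions of Theorem~\ref{HereditaryEasy}. The forward direction is essentially formal: homology is a derived functor, so isomorphic objects in $D^b(\C)$ have isomorphic homology in every degree. I would simply observe that an isomorphism in the derived category induces isomorphisms on all homology groups, since $H_i(-)$ factors through the derived category (this is one of the defining properties of $D^b(\C)$, the localization at quasi-isomorphisms). No hereditary hypothesis is needed here. The real content is the converse.

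For the converse, the key structural fact I would use is that over a hereditary ring every bounded complex is isomorphic in $D^b(\C)$ to the direct sum of its shifted homology groups; that is, $C_{\bullet} \cong \bigoplus_i H_i(C_{\bullet})[i]$. Once this splitting is established, the converse is immediate: if $H_i(B_{\bullet}) \cong H_i(C_{\bullet})$ for all $i$, then
$$ B_{\bullet} \cong \bigoplus_i H_i(B_{\bullet})[i] \cong \bigoplus_i H_i(C_{\bullet})[i] \cong C_{\bullet}. $$
So the crux is proving the formality/splitting statement. First I would reduce to showing that a complex with exactly two nonzero homology groups splits, by peeling off homology one degree at a time. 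Concretely, for a complex $C_\bullet$, consider the truncation triangle relating the ``bottom'' homology to the rest; this produces a distinguished triangle whose connecting morphism is classified by a class in some $\Ext^2$ group of $\C$.

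The hard part, and the only place the hereditary hypothesis enters, is showing this connecting morphism vanishes. The standard truncation triangle for a complex has a connecting map living in $\Hom_{D^b(\C)}(\tau_{\leq m} C_\bullet, (\tau_{> m} C_\bullet)[1])$, and iterating the decomposition reduces to controlling maps between shifted homology objects $\Hom_{D^b(\C)}(H_i[i], H_j[j+1])$. By the general relation between derived Hom and Ext (namely $\Hom_{D^b(\C)}(M[a], N[b]) \cong \Ext^{b-a}_{\C}(M,N)$, which the paper already invokes in Section~\ref{CatSummary}), the obstructions all live in groups $\Ext^{\geq 2}_{\C}(H_i, H_j)$, and these vanish precisely because $A$ is hereditary. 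Thus every truncation triangle splits, the complex decomposes as the sum of its shifted homology, and the converse follows. I would present the two-homology-group case carefully and then indicate the induction on the number of nonzero homology groups, rather than grinding through the general truncation bookkeeping.
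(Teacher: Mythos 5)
Your proof is correct, and it is the standard argument. Note that the paper itself does not prove Theorem~\ref{HereditaryEasy} --- it cites \cite[Section I.5.2]{Hap} --- and your truncation-triangle induction, with the connecting morphisms obstructed by classes in $\Ext^{\geq 2}_{\C}$ that vanish by heredity (plus the standard fact that a triangle with zero connecting map splits), is essentially Happel's own proof of the splitting $C_{\bullet} \cong \bigoplus_i H_i(C_{\bullet})[i]$, from which the theorem follows exactly as you say.
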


\begin{remark} Happel has a standing assumption that $k$ is algebraically closed in the section we cite.
As Happel says, this assumption is ``not really needed", and the careful reader should have little difficulty removing it. \end{remark}

In particular, $C_{\bullet}$ is isomorphic to the complex which has $H_i(C_{\bullet})$ in position $i$, and where all the maps are zero.
If you like, whenever we speak of an object of the derived category, you can use this trick to simply think of a sequence of modules, taking all the maps between them to be zero.  We will generally only be interested in indecomposable
objects in the derived category. If we view an 
indecomposable object as a sequence of modules in this way, exactly one
of the modules in the sequence will be non-zero.  

We introduce the following notations: For an $A$-module $M$, the object $M[i]$ is the complex which is $M$ in position $i$, and $0$ in every other position.
More generally, for any complex $C_{\bullet}$, the complex $C[i]_\bullet$ has $C[i]_j = C_{j-i}$, with correspondingly shifted maps.
We define direct sums of complexes in the obvious way, so $\bigoplus M_i[i]$ is the complex which is $M_i$ in position $i$, with all the maps being $0$.

In a category, one wishes to know the homorphisms between objects, and how to compose them. 
In the derived category, for $M, N$ objects of $\C$, 
we have $\Hom(M[a], N[b]) = 0$ if $a>b$ and $=\Ext_{\C}^{b-a}(M,N)$ if $b \geq a$. 
We sometimes adopt the notation $\Ext^j(B_{\bullet}, C_{\bullet})$ as shorthand for $\Hom(B_{\bullet}, C[j]_\bullet)$, for this reason.
The composition $\Hom(M[a], N[b]) \times \Hom(N[b], P[c]) \to \Hom(M[a], P[c])$ is the Yoneda product $\Ext_{\C}^{b-a}(M,N) \times \Ext_{\C}^{c-b}(N,P) \to \Ext_{\C}^{c-a}(M,P)$.

We have now described morphisms between complexes that have only one nonzero term. 
More generally, let $M_{\bullet} = \bigoplus M_i[i]$ and $N_{\bullet} = \bigoplus N_i[i]$ be two complexes with all maps $0$, then $\Hom(M,N) = \bigoplus_{i,j} \Hom(M_i[i], N_j[j])$.
Given three such complexes $M_{\bullet}$, $N_{\bullet}$ and $P_{\bullet}$, the composition $\Hom(M_{\bullet}, N_{\bullet}) \times \Hom(N_{\bullet}, P_{\bullet}) \to \Hom(M_{\bullet}, P_{\bullet})$ is the sum of the compositions of the individual terms.
So, if one only looks at complexes where all maps are zero, one can view the derived category as a convenient notational device for organizing the $\Ext$ groups and the maps between them.
In particular, when $A$ is hereditary, we really can understand all the objects and morphisms in the derived category in this way.

Finally, we must describe the ``triangles".
This means that, for every map $M_{\bullet} \stackrel{\phi}{\to} N_{\bullet}$, we must construct a complex $E_{\bullet}$ with maps $N_{\bullet} \to E_{\bullet}$ and $E_{\bullet} \to M_{\bullet}[1]$. 
We call this ``completing $M_{\bullet} \stackrel{\phi}{\to} N_{\bullet}$ to a triangle".
The sense in which this construction is natural is somewhat subtle, so we will gloss over this. 
We only use the triangle construction in the case that $M_{\bullet}$ and $N_{\bullet}$ are of the forms $M[a]$ and $N[b]$ for some $A$-modules $M$ and $N$, so we will only discuss it in that case. 
Furthermore, we will now restrict ourselves to the case that $A$ is hereditary. So there is a nonzero homorphism $M[a] \to N[b]$ if and only if $b-a$ is $0$ or $1$.  For notational simplicity we will restrict to the case $a=0$.  

The following theorem is the result of unwinding the definition of a triangle, the relation between $\Hom(M, N[1])$ and extensions between $N$ and $M$, and using Theorem~\ref{HereditaryEasy} to identify a complex with its cohomology. 

\begin{theorem}
Let $A$ be hereditary and let $M$ and $N$ be $A$-modules.

Let $\psi$ an $A$-module map $M \to N$ and $\phi$  the corresponding map $M \to N$ in the derived category.
If $\psi$ is injective then the completion of $M \stackrel{\phi}{\to} N$ to a triangle is isomorphic to $C$ where $C:=\mathrm{Coker}(\psi)$. 
The map $N \to C$ is the tautological projection and the map $C \to M[1]$ comes from the class of $0 \to M \to N \to C \to 0$ in $\Ext^1(C,M)$.

If $\psi$ is surjective then the completion of $M \stackrel{\phi}{\to} N$ to a triangle is isomorphic to $K[1]$, where $K:=\mathrm{Ker}(\psi)$. 
The map $K[1] \to M[1]$ is $(-1)$ times the tautological inclusion and the map $N \to K[1]$ comes from the class of $0 \to K \to M \to N \to 0$ in $\Ext^1(N,K)$.

Let $\psi$ be a class in $\Ext^1(M,N)$ and let $\phi$ be the corresponding map $M \to N[1]$. Then the completion of $M \stackrel{\phi}{\to} N[1]$ to a triangle is isomorphic to $E[1]$, where $E$ is the extension $0 \to N \to E \to M \to 0$ corresponding to $\phi$. The maps $N[1]$ to $E[1]$ and $E[1] \to M[1]$ are 
$(-1)$ times the maps from the extension short exact sequence.
\end{theorem}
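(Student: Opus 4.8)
The plan is to reduce everything to the content of Theorem~\ref{HereditaryEasy}, which lets me replace any complex by its cohomology, together with the elementary observation that a triangle in $D^b(\S)$ is, at the level of homology, a long exact sequence. I would organize the proof around the three cases in the statement, handling the injective and surjective module maps first and then the extension class, because the first two cases are really manipulations of short exact sequences of modules while the third requires interpreting a map $M \to N[1]$.

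First I would treat the case where $\psi \colon M \to N$ is injective. I would complete $M \stackrel{\phi}{\to} N$ to a triangle $M \to N \to E_\bullet \to M[1]$ and take the associated long exact sequence in homology. Since $M$ and $N$ are concentrated in degree $0$ and $\psi$ is injective, the long exact sequence forces $H_i(E_\bullet) = 0$ except in degree $0$, where $H_0(E_\bullet) \cong \operatorname{Coker}(\psi) = C$. By Theorem~\ref{HereditaryEasy}, $E_\bullet \cong C$. Tracking the connecting map identifies $C \to M[1]$ with the extension class of $0 \to M \to N \to C \to 0$ in $\Ext^1(C,M)$, which under the shift identification is exactly $\Hom(C, M[1])$. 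The surjective case is entirely parallel: the long exact sequence now concentrates the homology of the completion in degree $1$, giving $H_1 \cong \operatorname{Ker}(\psi) = K$, hence a complex isomorphic to $K[1]$, and the connecting maps are read off the same way. The only subtlety, which I would flag but not belabor, is the sign: the canonical identification of the triangle's maps introduces a factor of $(-1)$, matching the $(-1)$ times the tautological maps asserted in the statement.

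For the third case, where $\psi \in \Ext^1(M,N)$ corresponds to a map $\phi \colon M \to N[1]$, I would complete $M \stackrel{\phi}{\to} N[1]$ to a triangle $M \to N[1] \to F_\bullet \to M[1]$. The plan is to rotate this triangle, or equivalently to use that $\phi$ is, by the identification $\Hom(M,N[1]) = \Ext^1_\S(M,N)$, the map classifying an extension $0 \to N \to E \to M \to 0$. Rotating gives a triangle $N \to E_\bullet' \to M \to N[1]$ whose homology long exact sequence recovers precisely this short exact sequence, so $E_\bullet' \cong E$ concentrated in degree $0$; undoing the rotation shows the original completion $F_\bullet$ is $E[1]$. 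Again the maps $N[1] \to E[1]$ and $E[1] \to M[1]$ are $(-1)$ times the maps of the short exact sequence, by the same sign bookkeeping as before.

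The main obstacle is not the homological computation, which is forced by exactness, but the careful verification of the signs and the identification of the connecting morphisms with specific extension classes. The cleanest way to pin these down is to use the natural isomorphism $\Hom(C, M[1]) \cong \Ext^1_\S(C,M)$ recalled in the appendix, under which the connecting map of a triangle coming from a short exact sequence is identified with the class of that sequence; the residual $(-1)$ factors are an artifact of the standard conventions for rotating triangles and for the correspondence between triangles and short exact sequences. I would state that these signs follow from unwinding the definitions as indicated before the theorem, and leave the fully explicit sign chase to the reader, since it is routine once the homology of each completion has been identified.
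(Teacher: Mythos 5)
Your proposal is correct and follows essentially the same route as the paper, which proves the theorem precisely by unwinding the definition of the mapping cone, invoking the identification $\Hom(M,N[1])\cong\Ext^1_{\mathcal C}(M,N)$, and using Theorem~\ref{HereditaryEasy} to replace the completion by its homology; your use of the long exact sequence in homology to pin down that homology is just a tidy way of carrying out that unwinding. Your treatment of the sign factors matches the paper's level of detail, since the paper likewise leaves the explicit sign chase implicit in the conventions.
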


\begin{remark} We use the construction of completing to a triangle to define mutation of exceptional sequences. One of the surprising consequences of the theory of exceptional sequences is that all the maps we will deal with are either injective or surjective, so we do not need to know how to complete $\psi: M \to N$ to a triangle if $\psi$ is neither injective nor surjective.
For the interested reader, we explain nonetheless. Let $K$, $I$ and $C$ be the kernel, image and cokernel of $\psi$. 
The completion of $M \stackrel{\psi}{\to} N$ to a triangle is noncanonically isomorphic to $C \oplus K[1]$. 
The maps $K[1] \to M[1]$ and $N \to C$ are the tautological maps, the former
multiplied by $-1$.
The maps $C \to M[1]$ and $N \to K[1]$ come from classes in $\Ext^1(C,M)$ and $\Ext^1(N,K)$. The precise classes depend on the noncanonical choice of isomorphism, but one can say that their images in $\Ext^1(C,I)$ and $\Ext^1(I,K)$ correspond to the extensions $0 \to I \to N \to C \to 0$ and $0 \to K \to M \to I \to 0$, respectively.
\end{remark}


\end{document}